\newtheorem{thm}{Theorem}[subsection]
\newtheorem{cor}[thm]{Corollary}
\newtheorem{propn}[thm]{Proposition}
\newtheorem{rem}[thm]{Remark}
\newtheorem{defn}[thm]{Definition}
\newtheorem{lemma}[thm]{Lemma}
\numberwithin{equation}{section}
\def\a{\alpha}
\def\k{\kappa}
\def\*{\bullet}
\newcommand{\surj}{\twoheadrightarrow}
\newcommand{\isomarrow}{\stackrel{\sim}{\rightarrow}}
\newcommand{\darrow}{\stackrel{d}{\rightarrow}}
\newcommand{\lrarrow}{\xymatrix{ \ar@<.7ex>[r] & \ar@<.7ex>[l]}}
\begin{document}
\title{Modules Over a Chiral Algebra}
\author{N. Rozenblyum}
\date{}

\maketitle

\section{Introduction}
The theory of chiral algebras introduced by Beilinson and Drinfeld in \cite{CHA} has found numerous applications in the geometric Langlands program as well as the study of conformal field theory in two dimensions.  Chiral algebras generalize the notion of a vertex algebra and that of an $E_2$-algebra in topology.\\
\\
As explained in
\cite{CHA}, we can regard a chiral algebra $\mathcal{A}$ on a curve $X$ in three different ways:
\begin{enumerate}
\item
A $D$-module on $X$ with a chiral bracket $\{\ ,\ \} :
j_*j^*(\mathcal{A}\boxtimes \mathcal{A}) \rightarrow \Delta_*(A)$.
\item
A Lie algebra in the tensor category of $D$-modules on $Ran(X)$ given
by the chiral tensor product $\otimes^{ch}$.
\item
As a factorization algebra on $Ran(X)$, i.e. a sequence of
quasi-coherent sheaves $\mathcal{A}^{(n)}$ for every power of the
curve $X^n$ satisfying the factorization property.  This description
is Koszul dual to the one above.
\end{enumerate}

Given a chiral algebra $\mathcal{A}$, it is natural to consider modules over it.  For instance, to every semi-simple Lie algebra $\mathfrak{g}$ and an $ad$-invariant bilinear form $\k$, one can associate a chiral algebra $\mathcal{A}_{\mathfrak{g},\kappa}$ on any smooth curve $X$.  Fixing a point $x\in X$, the category of chiral $\mathcal{A}$-modules supported at $x$ is exactly the category of representations of the affine Kac-Moody Lie algebra $\hat{\mathfrak{g}}_{\kappa}$.

From the point of view of conformal field theory, the category of chiral modules supported at a point $x$ is the category assigned to the boundary circle of an infinitesimal disk around $x$.  In order to understand how these categories behave as the point moves, one is led to consider chiral modules on the entire curve.  Such modules are described in \cite{CHA} in both chiral and factorization terms.  Gaitsgory has generalized the notion of factorization modules over a chiral algebra $\mathcal{A}$ to any power of a curve $X$.  He and Lurie consider the concept of a chiral category -- a motivating example of which is the category of modules over a chiral algebra on all powers of the curve.  In their upcoming work, they show that a chiral category with certain holonomicity conditions is equivalent to a braided monoidal category.  In this way, chiral categories are a generalization of braided monoidal categories.  This explains, for instance, the braided monoidal structure on the category $\hat{\mathfrak{g}}_{\kappa}$-modules.

The main goal of this paper is to give a description of modules over a chiral algebra on an arbitrary power of a curve $X$ similarly to the three approaches to chiral algebras above.  In addition, we study a number of related notions and constructions involving Lie-* algebras and factorization spaces.

The paper is organized as follows.  In section \ref{conv}, we fix notation and conventions.  In section \ref{chiral}, we define the parametrized Ran space and give two equivalent definitions of modules over a chiral algebra on an arbitrary power of a curve $X$: in terms of factorization and certain chiral operations.  In section \ref{star}, we consider chiral algebras which are chiral envelopes of Lie-* algebras.  We recall the construction of the chiral envelope and define Lie-* and chiral modules for a Lie-* algebra on arbitrary powers of a curve.

In section \ref{spaces}, we consider factorization spaces, which are nonlinear analogues of chiral algebras.  In the case of counital factorization algebraic stacks, we give a characterization in terms of multijets of a $D_X$ algebraic stack.  We then consider the case of $D_X$ group schemes and construct a functor from modules over a $D_X$ group scheme to Lie-* modules over a corresponding Lie-* algebra which is fully faithful in the case that the group is connected.  In section \ref{tensor}, we consider the chiral and $*$-tensor structures on sheaves on the Ran space and show that sheaves on the parametrized Ran space form a module category with respect to both tensor structures.  We use this to characterize Lie-* and chiral modules as Lie modules in these module categories (with respect to the corresponding tensor structure).

\paragraph{Acknowledgements.}
I would like to thank Jacob Lurie for numerous helpful conversations.  I am deeply grateful to Dennis Gaitsgory for posing the problem addressed in this paper as well as detailed suggestions and encouragement during the various stages of this paper, which have substantially improved it in both substance and exposition.  This work was carried out while the author was supported by an NSF Graduate Fellowship.

\section{Notation and Conventions}\label{conv}
Throughout this paper, $X$ will be an algebraic curve over a field of characteristic 0.  We will consider chiral algebras on $X$ and modules on powers of $X$.  For our purposes, chiral algebras are assumed to be unital.  In the
nonunital case, modules over a chiral algebra are defined as modules
over the chiral algebra with the unit adjoined.\\
\\
Let $I_0$ be a finite set. In what follows, we will consider $D \boxtimes \mathcal{O}_{X^{I_0}}$ modules on the 
schemes of the form $Y\times X^{I_0}$ and their subspaces.  For such a scheme $Z$, we will denote by $(D\boxtimes \mathcal{O})_Z$ the corresponding sheaf of algebras on $Z$. Unless otherwise noted, all modules will be right modules.  Let $\mathcal{M}(Z)$ denote the category of $(D\boxtimes \mathcal{O})$-modules, and let $D\mathcal{M}(Z)$ be its derived stable $\infty$-category \cite{DAGI}.\\
\\
The category $D\mathcal{M}(Z)$ has a natural t-structure with the heart given by right $(D\boxtimes\mathcal{O})$-modules.  In the case that $Z$ is smooth, there is also a t-structure whose heart is given by left $(D\boxtimes\mathcal{O})$-modules.  We will work with the right t-structure.  We have the (right) forgetful functor to the derived $\infty$-category of quasi-coherent sheaves
\[ D\mathcal{M}(Z) \rightarrow DCoh(Z) .\]

Let $f: Z_1\rightarrow Z_2$ be a map.  We then have the functors
\[ f_\* : D\mathcal{M}(Z_1) \lrarrow D\mathcal{M}(Z_2) : f^! \]
The functors $f_\*$ and $f^!$ are not in general adjoint functors.  They behave in the same way as the corresponding ones for $D$-modules. For instance, if $f$ is quasi-finite, then $f_\*$ is left exact; and if $f$ is affine, then $f_\*$ is right exact. If $f$ is proper, then $f_\*$ is left adjoint to $f^!$.    Furthermore, the following diagram commutes
\[ \xymatrix{ D\mathcal{M}(Z_2)\ar[d]\ar[r]^{f^!} & D\mathcal{M}(Z_1)\ar[d] \\
DCoh(Z_2)\ar[r]^{f^!} & DCoh(Z_1)} .\]

\begin{rem}
For a smooth $X^{I_0}$-scheme $Z$, we have a notion of $D$-modules on $Z$ vertical along the projection to $X^{I_0}$.  In the case of a product $Y\times X^{I_0}$, $D\boxtimes \mathcal{O}_{X^{I_0}}$ modules on $Y\times X^{I_0}$ are the same as $D$-modules vertical along the projection.  As in the upcoming work of Gaitsgory and Lurie, the category of $D$-modules vertical along the projection naturally has a connection along the base, and the $D$-objects with respect to this connection recover $D$-modules on $Z$.  All our constructions will be compatible with this connection. Therefore, we will work with $D\boxtimes \mathcal{O}$-modules and the analogous results will be valid for $D$-modules as well by passing to $D$-objects.  In particular, we will define and work with chiral modules which on $X^{I_0}$ which are quasi-coherent sheaves on $X^{I_0}$.  Passing to $D$-objects in this category recovers the chiral modules which are $D$-modules.
\end{rem}

\section{Chiral Modules}\label{chiral}
\subsection{Subspace of Diagonals}
Let $X$ be a curve and let $I_0$ be a fixed finite set.  For a finite set $I$ with an embedding $\Phi: I_0\hookrightarrow I$, consider the $|I_0|$-dimensional closed subscheme $H_{\Phi} \subset X^I$ given by the union of the diagonal subschemes
\begin{equation} H_{\Phi} = \bigcup_{\pi:I_1\rightarrow I_0} \{x_i = y_{\pi(i)}\ \mbox{ for } i\in I_1 \} \subset X^{I_1}\times X^{I_0}
\end{equation}
where $I=I_0\sqcup I_1$ and $x_i$ and $y_j$ are the coordinates on $X^{I_1}$ and $X^{I_0}$ respectively.  Equivalently, the union is over all surjections $\pi: I \surj I_0$ which are the identity on $I_0$.\\
\\
The subspaces $H_{\Phi}$ have the following factorization property.  Suppose we are given a partition $\alpha: I_0 = I_0^{(1)} \sqcup I_0^{(2)}$. Let $U_\alpha = \{y_{j_1}\neq y_{j_2} \mbox{ for } j_i\in I_0^{(i)}\} \subset X^{I_1} \times X^{I_0}$.  We then have
\begin{equation}\label{gammafact}
H_{\Phi} \cap U_\alpha = \bigsqcup_{\Phi = \Phi_1 \sqcup \Phi_2} \left(H_{\Phi_1} \times H_{\Phi_2}\right) \cap U_\alpha
\end{equation}
where the union is over partitions $I = I^{(1)} \sqcup I^{(2)}$ such that $\Phi(I_0^{(i)})\subset I^{(i)}$ and $\Phi_i = \Phi|_{I_0^{(i)}}$ for $i=1,2$.\\
\\
Let $M$ be a quasi-coherent sheaf on $X^{I_0}$.  Consider the following diagram
\begin{equation}
\xymatrix{  H_{\Phi} \ar[r]^-{i} & X^{I_1}\times X^{I_0} \ar[dl]_{p_1}\ar[dr]^{p_2} \\
X^{I_1} & & X^{I_0} }
\end{equation}
Now, let $\Gamma_{\Phi}(M)$ be the $D_{X^{I_1}}\boxtimes \mathcal{O}_{X^{I_0}}$-module given by
\begin{equation}
 \Gamma_{\Phi}(M) := i_\*i^! p_2^!(M) .
\end{equation}
Note that we have the exact sequence
\begin{equation}
0 \rightarrow p_2^!(M)[-I_1] \rightarrow j_\*j^!(p_2^!(M))[-I_1] \rightarrow \Gamma_{\Phi}(M)\rightarrow 0.
\end{equation}
The functor $\Gamma_{\Phi}$ has the following transitivity property.
\begin{lemma}
Let $M$ be a quasicoherent sheaf on $X^{I_0}$.  We then have for $I_0\subset I$ and $I\subset J$,
\[ \Gamma_{(I\subset J)}\circ \Gamma_{(I_0\subset I)} (M) = \Gamma_{(I_0\subset J)} (M) .\]
\end{lemma}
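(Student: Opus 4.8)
The plan is to reduce both functors to the defining form $i_\*\, i^!\, p^!$ and then compose them via base change, so that the composite becomes a single such functor attached to an intersection of diagonal loci, which one identifies with $H_{(I_0\subset J)}$. Write $I = I_0 \sqcup I_1$ and $J = I \sqcup J_1 = I_0 \sqcup I_1 \sqcup J_1$, and denote by $i \colon H_{(I_0\subset I)}\hookrightarrow X^I$, $i'\colon H_{(I\subset J)}\hookrightarrow X^J$ and $i''\colon H_{(I_0\subset J)}\hookrightarrow X^J$ the three inclusions, with $p\colon X^I\to X^{I_0}$, $q\colon X^J\to X^I$ and $r\colon X^J\to X^{I_0}$ the projections. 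Unwinding the definitions, the left-hand side is $i'_\*\,(i')^!\,q^!\,i_\*\,i^!\,p^!(M)$, and the goal is to match it with $i''_\*\,(i'')^!\,r^!(M)$.

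First I would push $q^!$ past $i_\*$. Since $q$ is the (smooth) projection and $i$ is a closed immersion, the square with corner $X^{J_1}\times H_{(I_0\subset I)}$ is Cartesian, and base change (valid for these functors as for $D$-modules) gives $q^! i_\* \cong \widetilde{i}_\*\,\pi^!$, where $\widetilde{i} = \mathrm{id}\times i$ and $\pi$ is the projection onto $H_{(I_0\subset I)}$. Folding the remaining $!$-pullbacks into the single projection $s \colon X^{J_1}\times H_{(I_0\subset I)}\to X^{I_0}$ turns the left-hand side into $i'_\*\,(i')^!\,\widetilde{i}_\*\,s^!(M)$. Next I would apply base change to $(i')^!\widetilde{i}_\*$: both $i'$ and $\widetilde{i}$ are closed (hence proper) immersions into $X^J$, so for the Cartesian square with corner the intersection $Z = H_{(I\subset J)}\times_{X^J}\bigl(X^{J_1}\times H_{(I_0\subset I)}\bigr)$ we get $(i')^!\widetilde{i}_\* \cong b_\*\,a^!$, using the proper adjunction recorded above, and the left-hand side collapses to $(i'\circ b)_\*\,(s\circ a)^!(M)$, where $i'\circ b$ is the inclusion of $Z$ into $X^J$ and $s\circ a$ is its projection to $X^{I_0}$.

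It then remains to identify this with $\Gamma_{(I_0\subset J)}(M)$. Set-theoretically $Z$ consists of the points of $X^J$ at which every coordinate indexed by $I_1$ coincides with some coordinate indexed by $I_0$, and every coordinate indexed by $J_1$ coincides with some coordinate indexed by $I$; using the first condition, the second becomes that every $J_1$-coordinate coincides with some $I_0$-coordinate, which is exactly the condition cutting out $H_{(I_0\subset J)}$. Hence $Z_{\mathrm{red}} = H_{(I_0\subset J)}$ and $s\circ a$ restricts to $r$, so the two sides will agree provided the derived functors see only the reduced locus.

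This last point is where the real content, and the main obstacle, lies. The scheme-theoretic (indeed derived) intersection $Z$ is in general not reduced: already for $I_0 = \{0\}\subset I = \{0,1\}\subset J = \{0,1,2\}$, with coordinates $y,x,z$, one finds $Z \cong \operatorname{Spec} k[y,z]/(z-y)^2$ embedded along $\{x = y\}$, whose reduction is the small diagonal $H_{(I_0\subset J)}$. The key observation I would exploit is that every defining equation of the two loci sets a coordinate indexed by $I_1\sqcup J_1$ equal to another coordinate, so all the subschemes in sight are flat over the $\mathcal{O}$-base $X^{I_0}$ and the entire non-reduced (and higher Tor) structure of $Z$ is vertical, i.e. concentrated in the $D$-module directions $I_1\sqcup J_1$. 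Along those directions $(i'\circ b)^!$ is the $D$-module $!$-restriction, which by Kashiwara's theorem depends only on the reduced support; consequently $(i'\circ b)_\*\,(s\circ a)^!(M)$ may be computed on $Z_{\mathrm{red}} = H_{(I_0\subset J)}$ without changing the answer, yielding $i''_\*\,(i'')^!\,r^!(M) = \Gamma_{(I_0\subset J)}(M)$. I expect verifying that the non-reducedness is genuinely confined to the $D$-directions, so that the reduction step is legitimate, to be the delicate part; the base-change manipulations themselves are formal given the behavior of $f_\*$ and $f^!$ described above.
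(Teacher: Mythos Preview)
Your approach is correct and essentially the same as the paper's: both reduce to base change for the incidence correspondences, with the paper writing $\Gamma_\Phi = i_\*\pi^!$ (for $\pi$ the projection $H_\Phi\to X^{I_0}$) and then applying a single base change $\pi_3^! i_{1\*}\simeq i_{2\*}\pi_2^!$ for the square with corners $H_{(I_0\subset J)},\,H_{(I\subset J)},\,H_{(I_0\subset I)},\,X^I$, which it simply declares Cartesian. Your two-step base change computes the same fiber product, and your discussion of the non-reducedness (resolved via Kashiwara) makes explicit a point the paper leaves implicit---as your own example shows, that square is Cartesian only up to nilpotents lying in the $D$-directions.
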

\begin{proof}
Consider the diagram
\[ \xymatrix{ H_{(I_0\subset J)} \ar[r]^-{i_2}\ar[d]_{\pi_2} & H_{(I\subset J)} \ar[r]^-{i_3}\ar[d]_{\pi_3} & X^J \ar[dl]\\
H_{(I_0\subset I)} \ar[r]^-{i_1}\ar[d]_{\pi_1} & X^I \ar[dl]_{p_1} \\
X^{I_0} } \]
The square is Cartesian.  We have $\Gamma_{(I_0\subset I)}(M) = i_{1\*}\pi_1^!(M)$ and $\Gamma_{(I\subset J)}\circ \Gamma_{(I_0\subset I)}(M) = i_{3\*}\pi_3^!(\Gamma_{(I_0\subset I)}(M))$.  Since
$\pi_3^! i_{1 \*} = i_{2 \*} \pi_2^!$, we have that
\[ i_{3 \*} \pi_3^!(\Gamma_{(I_0\subset I)}(M)) = i_{3 \*} i_{2 \*} \pi_2^! \pi_1^!(M) = \Gamma_{(I_0\subset J)}(M). \]
\end{proof}

We also have the following factorization property for $\Gamma$, which follows directly from (\ref{gammafact}).
\begin{lemma}\label{gfact}
Let $M$ and $N$ be quasi-coherent sheaves on $X^{I}$ and $X^{J}$ respectively. Let $K=I\sqcup J$.  We then have a natural isomorphism
\[ j_{(x_i\neq y_j) \*} j_{(x_i\neq y_j)}^!(\Gamma_{(K \subset K \sqcup [1])} (M\boxtimes N)) \simeq
j_{(x_i\neq y_j) \*} j_{(x_i\neq y_j)}^!\left(
\begin{array}{c}
 j_{(z\neq y_j) \*} j_{(z\neq y_j)}^!( \Gamma_{(I\subset I\sqcup [1])}(M) \boxtimes N)\\
  \oplus\\
   j_{(z\neq x_i) \*} j_{(z\neq x_i)}^!(M\boxtimes \Gamma_{(J\subset J\sqcup [1])}(N))
\end{array}
  \right) \]
where $x_i, y_j$ and $z$ are the coordinates on $X^{I}\times X^{J}\times X$.
\end{lemma}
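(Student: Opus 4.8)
The plan is to deduce the isomorphism directly from the factorization property (\ref{gammafact}), exploiting that $\Gamma$ is assembled from the pushforward--pullback $i_\* i^!$ along the inclusion of a diagonal. First I would unwind the definition: writing $i\colon H_{(K\subset K\sqcup[1])}\hookrightarrow X^{I}\times X^{J}\times X$ for the inclusion and $p_2$ for the projection to $X^{I}\times X^{J}$ that forgets the coordinate $z$, we have
\[ \Gamma_{(K\subset K\sqcup[1])}(M\boxtimes N)=i_\* i^! p_2^!(M\boxtimes N), \]
an object supported set-theoretically on the locus $H_{(K\subset K\sqcup[1])}$ where $z$ is glued to one of the coordinates of $K=I\sqcup J$.

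Next I would invoke (\ref{gammafact}) with $I_0=K$, $I_1=[1]$, and the partition $\alpha\colon K=I\sqcup J$, for which the relevant open locus is precisely $U_\alpha=\{x_i\neq y_j\}$. The partition of the embedding $K\subset K\sqcup[1]$ is governed by whether the extra point $z$ is placed with the $I$-coordinates or with the $J$-coordinates, so (\ref{gammafact}) gives over $U_\alpha$ the disjoint decomposition
\[ H_{(K\subset K\sqcup[1])}\cap U_\alpha = \bigl((H_{(I\subset I\sqcup[1])}\times X^{J})\cap U_\alpha\bigr)\ \sqcup\ \bigl((X^{I}\times H_{(J\subset J\sqcup[1])})\cap U_\alpha\bigr); \]
the two pieces cannot meet over $U_\alpha$, since that would force $x_i=y_j$ for some $i,j$. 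Because the restriction of a closed pushforward $i_\*$ to an open subset sees only the part of the support lying in that subset, applying $j_{(x_i\neq y_j)}^!$ splits $i_\* i^! p_2^!(M\boxtimes N)$ over $U_\alpha$ as a direct sum of the contributions of the two strata.

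It then remains to identify each summand with the corresponding term on the right-hand side. For the stratum where $z$ meets the $I$-coordinates, I would use base change together with the compatibility of $i_\*$, $i^!$ and $p^!$ with exterior products to rewrite its contribution as $\Gamma_{(I\subset I\sqcup[1])}(M)\boxtimes N$; the factor $j_{(z\neq y_j) \*}j_{(z\neq y_j)}^!$ serves to retain only the part of the support of this box product where $z$ meets an $x_i$ but no $y_j$, which over $U_\alpha$ is exactly the first stratum above. The stratum where $z$ meets the $J$-coordinates is handled symmetrically and yields $j_{(z\neq x_i) \*}j_{(z\neq x_i)}^!(M\boxtimes\Gamma_{(J\subset J\sqcup[1])}(N))$. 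Reassembling with $j_{(x_i\neq y_j) \*}$ then produces the asserted isomorphism, the outer $j_{(x_i\neq y_j) \*}j_{(x_i\neq y_j)}^!$ on both sides recording that the decomposition is valid only after restriction to $U_\alpha$.

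The step I expect to be the main obstacle is the bookkeeping in the last two paragraphs: one must verify that the auxiliary localizations $z\neq y_j$ and $z\neq x_i$ cut out exactly the two disjoint strata over $U_\alpha$ without losing or double-counting configurations in which $z$ coincides with several coordinates, and that the functors $i_\*$, $i^!$ and $p^!$ are genuinely compatible with $\boxtimes$ at the derived level, so that $\Gamma$ commutes with exterior products. Once these compatibilities are recorded, the isomorphism follows formally from (\ref{gammafact}).
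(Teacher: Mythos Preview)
Your proposal is correct and follows exactly the approach the paper indicates: the paper does not spell out a proof of this lemma at all, merely stating that it ``follows directly from (\ref{gammafact}).'' You have correctly unpacked what that means---applying (\ref{gammafact}) with $I_0=K=I\sqcup J$ and the partition $\alpha\colon K=I\sqcup J$ to split $H_{(K\subset K\sqcup[1])}$ over $U_\alpha$ into the two strata, then using open base change and the compatibility of $i_\*$, $i^!$, $p^!$ with exterior products to identify the summands---and the bookkeeping concerns you flag in the last paragraph are real but routine.
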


\subsection{Parametrized Ran Space}
Let $I_0$ be a finite set.  To keep track of the combinatorics of various powers of $X$ over $X^{I_0}$, it will be convenient to use the following category.  Let $\mathcal{S}_{I_0}$ be the category whose objects are finite sets with an embedding $I_0\subset I$ of $I_0$, and the morphisms $Hom_{\mathcal{S}_{I_0}}(I_0\subset I, I_0\subset J)$ are given by surjective maps $I\surj J$ which restrict to the identity on $I_0$.  For a finite set $I$, let $\tilde{I}:=I\sqcup I_0$.  This gives a functor from the category of finite sets with surjections to $\mathcal{S}_{I_0}$ given by $I \mapsto (I_0 \subset \tilde{I})$ which is essentially surjective.

Now, we can define the parametrized Ran space $Ran_{I_0}(X)$,
which is an $\mathcal{S}_{I_0}^{op}$ diagram of schemes.  To each
$(I_0\subset I)\in \mathcal{S}_{I_0}$, we assign the product $X^{I}$,
and for each morphism $\pi: J \surj I$ in $\mathcal{S}_{I_0}$, we have the
corresponding diagonal map
\[ \Delta^{(\pi)}: X^{I}\rightarrow X^{J} .\]
Morally, we regard $Ran_{I_0}(X)$ as the colimit of this diagram; namely, we consider $Ran_{I_0}(X)$ as the space of nonempty finite subsets of $X$ with an ordered subset of (possibly repeating) $I_0$ points. While there is no such colimit in the category of algebraic spaces, we can reasonably consider sheaves on $Ran_{I_0}(X)$.

Let $D\mathcal{M}(X^{\tilde{I}})$ be the stable $\infty$-category of $D_{X^I}\boxtimes \mathcal{O}_{X^{I_0}}$ modules for $(I_0\subset \tilde{I})\in \mathcal{S}_{I_0}$.  For each $\pi: \tilde{J} \surj \tilde{I}$, we have the derived functor
\begin{equation}\label{DMfunct}
 \Delta^{(\pi)!}: D\mathcal{M}(X^{\tilde{J}}) \rightarrow D\mathcal{M}(X^{\tilde{I}}).
\end{equation}
We define $D\mathcal{M}(Ran_{I_0})$ to be the homotopy limit of this diagram.  Roughly, an object $M$ of $D\mathcal{M}(Ran_{I_0})$ consists of a collection of $D_{X^I}\boxtimes \mathcal{O}_{X^{I_0}}$ modules $M^{\tilde{I}}$ along with isomorphisms
\[ \Delta^{(\pi)!}(M^{\tilde{J}}) \isomarrow M^{\tilde{I}} \]
and suitable compatibility data.
More precisely, the functors (\ref{DMfunct}) give a Cartesian fibration
 $p: D\mathcal{M}^{!}_{I_0} \rightarrow \mathcal{S}_{I_0}$ \cite{Topos} and we define $D\mathcal{M}(Ran_{I_0})$ to be the category of Cartesian sections of $p$.  $D\mathcal{M}(Ran_{I_0})$ is a stable $\infty$-category.\\
 \\
 In what follows it will also be convenient to consider the lax limit of this diagram.  For each morphism, $\pi: \tilde{J}\surj \tilde{I}$ in $\mathcal{S}_{I_0}$, we have the functors
 \[ \Delta^{(\pi)}_\*: D\mathcal{M}(X^{\tilde{I}}) \rightarrow D\mathcal{M}(X^{\tilde{J}})
 \]
 which give a coCartesian fibration $q: D\mathcal{M}_{I_0\*} \rightarrow \mathcal{S}_{I_0}$.  Let $D\mathcal{M}(X^{\mathcal{S}_{I_0}})$ be the category of sections of $q$.  Roughly, an object of $D\mathcal{M}(X^{\mathcal{S}_{I_0}})$ is a collection of $D_{X^I}\boxtimes \mathcal{O}_{X^{I_0}}$ modules $M^{\tilde{I}}$ along with morphisms
\[ \Delta^{(\pi)}_\*(M^{\tilde{I}}) \rightarrow M^{\tilde{J}} \]
and suitable compatibility data.  By adjunction, we have the inclusion functor
\[ r^!: D\mathcal{M}(Ran_{I_0})\rightarrow D\mathcal{M}(X^{\mathcal{S}_{I_0}}) \]
which has a left adjoint
\[ r_*: D\mathcal{M}(X^{\mathcal{S}_{I_0}}) \rightarrow D\mathcal{M}(Ran_{I_0}). \]
These realize the category $D\mathcal{M}(Ran_{I_0})$ as a localization of $D\mathcal{M}(X^{\mathcal{S}_{I_0}})$.\\
\\
By construction, we have the functors
\[ s_{\tilde{I}*}: D\mathcal{M}(X^{\tilde{I}}) \rightarrow D\mathcal{M}(X^{\mathcal{S}_{I_0}}) \]
for each $\tilde{I}$ given by pushforward along with a right adjoint
\[ s^{!}_{\tilde{I}}: D\mathcal{M}(X^{\mathcal{S}_{I_0}}) \rightarrow D\mathcal{M}(X^{\tilde{I}}) \]
given by $\{M^{\tilde{J}}\} \mapsto M^{\tilde{I}}$.  We thus have a diagram of adjoint functors
\begin{equation}
D\mathcal{M}(X^{\tilde{I}}) \overset{s_{\tilde{I}*}}{\underset{s^!_{\tilde{I}}}{\lrarrow}} D\mathcal{M}(X^{\mathcal{S}_{I_0}}) \overset{r_*}{\underset{r^!}{\lrarrow}} D\mathcal{M}(Ran_{I_0}) .
\end{equation}
Explicitly, the left adjoints are given by
\[ s_{\tilde{I}*}(M)_{X^{\tilde{J}}} = \underset{\pi:\tilde{J}\surj \tilde{I}}{\bigoplus} \Delta^{(\pi)}_\* (M) \ \ \ \mbox{ for } M\in D\mathcal{M}(X^{\tilde{I}}) \mbox{ and} \]
\[ r_*(N)_{X^{\tilde{J}}} = \underset{\pi: \tilde{J}\surj \tilde{I}}{\mbox{hocolim }} \Delta^{(\pi)!} N_{X^{\tilde{I}}}\ \ \ \mbox{ for } N\in D\mathcal{M}(X^{\mathcal{S}_k}).\]
Let $r_{\tilde{I}*} = s_{\tilde{I}*} \circ r_*$.
We have that
\[ r_{I_0*}(M)_{X^{\tilde{J}}} = \Gamma_{(I_0\subset \tilde{J})}(M)  \ \ \ \mbox{ for } M\in D\mathcal{M}(X^{I_0}).\]
In particular, $r_{I_0*}(M)_{X^{I_0}} \simeq M$ and therefore the functor
\[ r_{I_0*} : D\mathcal{M}(X^{I_0})\rightarrow D\mathcal{M}(Ran_{I_0}) \]
is a fully faithful embedding.\\
\\
For $\tilde{I}\neq I_0$ we can describe this functor as follows.  For finite sets $I,J$ let
\[ H_{I, J; I_0} \subset X^I\times X^J \times X^{I_0} \]
be the incidence correspondence given by $\{x_i, z_k\} = \{y_i, z_k\}$ (as sets) where $x_i, y_j$ and $z_k$ are the coordinates of $X^I$, $X^J$ and $X^{I_0}$ respectively.  We have the correspondence
\[ \xymatrix{ & \ar[dl]_{\pi_1} H_{I, J; I_0}\ar[dr]^{\pi_2} &\\ X^{\tilde{I}} && X^{\tilde{J}} } \]
and $r_{\tilde{I}*}$ is given by
\[ r_{\tilde{I}*}(M)_{X^{\tilde{J}}} = \pi_{2\*}\pi_1^!(M)  \ \ \ \mbox{ for } M\in D\mathcal{M}(X^{\tilde{I}}).\]

The category $D\mathcal{M}(Ran_{I_0})$ has two natural $t$-structures.  In the projective $t$-structure, an object $M$ is co-connective if each $r_{\tilde{I}}^!(M)[-\tilde{I}]$ is co-connective.  The connective objects are generated by $r_{\tilde{I}*}(M_I)[\tilde{I}]$ for $M_I \in D\mathcal{M}(X^{\tilde{I}})$ connective.  Similarly, in the inductive $t$-structure, the connective objects are generated by $r_{\tilde{I}*}(M_I)$.\\
\\
We will consider the subcategory $\mathcal{M}(Ran_{I_0}) \subset D\mathcal{M}(Ran_{I_0})$ which is not abelian but is a subcategory of the abelian category given by the heart of the projective $t$-structure.  An object of $\mathcal{M}(Ran_{I_0})$ is given by a collection of $D_{X^I}\boxtimes \mathcal{O}_{X^{I_0}}$ modules $M^{\tilde{I}}$ along with compatible derived isomorphisms
\[ \nu_F^{(\pi)}: \Delta^{(\pi)!} M^{\tilde{J}}[|J|-|I|] \isomarrow \mathcal M^{\tilde{I}} \]
for each morphism $\pi: \tilde{J}\surj \tilde{I}$ in $\mathcal{S}_{I_0}$.  In particular, we require that each $M^{\tilde{I}}$ have no local sections on the diagonals.
The inclusion $\mathcal{M}(Ran_{I_0}) \rightarrow D\mathcal{M}(Ran_{I_0})$ is given by $\{M^{\tilde{I}}\} \mapsto \{M^{\tilde{I}}[\tilde{I}]\}$.  In what follows we will mostly be dealing with this category rather than the derived version though pretty much everything is also true in the derived setting.\\
\\
As in the derived case, we have adjoint functors
\[
\mathcal{M}(X^{I_0}) \overset{r_{I_0*}}{\underset{r^!_{I_0}}{\lrarrow}} \mathcal{M}(Ran_{I_0})
\]
with ${r_{I_0*}}$ a fully faithful embedding.  For other $\tilde{I}$, the functors $r_{\tilde{I}*}: D\mathcal{M}(X^{\tilde{I}})\rightarrow D\mathcal{M}(Ran_{I_0})$ don't map $M\in\mathcal{M}(X^{\tilde{I}})$ to an object in $\mathcal{M}(Ran_{I_0})$ unless $M$ has no local sections supported on the diagonals.

\subsection{Factorization Modules}
For $(I_0\subset J)\in \mathcal{S}_{I_0}$ and a pointed finite set $\star\in I$, consider the set of partitions
\[ Part_{I,\star}(J,I_0): = \{\mbox{partitions } J=\sqcup_{i\in I} J_i \mbox{ such that } I_0\subset J_{\star}\} .\]
Note that we can naturally identify
\[ Part_{I,\star}(J,I_0) \simeq \{ J \surj I \mbox{ such that } I_0\mapsto \star \} .\]
Namely, given such a surjection $\pi: J\surj I$, let $J_i = \pi^{-1}(i)$ (and vice versa).
In what follows, it will often be convenient to describe partitions as such surjective maps.  For a finite set $I$, let $I^+$ denote the pointed set $I\sqcup \{\star\}$.

Note that a map $J\surj I$ in $\mathcal{S}_{I_0}$ defines a partition of $I$ by composing with the collapsing map: $J\surj I\surj I/I_0$.  In particular for a map $\tilde{J}\surj \tilde{I}$, the corresponding partition is given by $\tilde{J}\surj I^{+}$.

Let $\pi: J\surj I$ be a partition.  Consider the open subset
\[ j^{(J/I)}: U^{(J/I)} \hookrightarrow X^J \]
given by $U^{(J/I)} = \{ (x_i)\in X^J \ |\  x_i\neq x_j \mbox{ if } \pi(i)\neq \pi(j) \}$.  Slightly abusing notation, for a map $\pi: J\surj I$ in $\mathcal{S}_{I_0}$, we will denote by $j^{(J/I)}: U^{(J/I)} \hookrightarrow X^I$ the open set corresponding to the partition obtained by collapsing $I_0$ in $I$.  Thus, for such a map, we have $J_i = \pi^{-1}(i)$ for $i\in I-I_0$ and $J_{\star} = \pi^{-1}(I_0)$.
  We will also denote by $j^{(J)}: U^{(J)}\hookrightarrow X^J$ the open set corresponding to the partition which consists of $J_1 = J - I_0$ and $I_0$.

\begin{defn}
Let $\mathcal{A}$ be a chiral algebra over a curve $X$, $I_0$ a finite set.
A factorization $\mathcal{A}$-module $M$ on $X^{I_0}$ is a $D\boxtimes \mathcal{O}_{X^{I_0}}$-module on
$Ran_{I_0}(X)$ along with the data of an isomorphism
\[ c_{[J/I]}: j^{(J/I)!}(\boxtimes_{i\in I-I_0} \mathcal{A}^{(J_i)} \boxtimes M^{(J_{\star})})\isomarrow j^{(J/I)!} M^{(J)}\]
for each $\pi: J\surj I$ in $\mathcal{S}_{I_0}$ satisfying
the following conditions.
\begin{enumerate}
\item
The unit map $\omega_{X^I} \rightarrow \mathcal{A}^{(I)}$ induces a map of
sheaves on $Ran_{I_0}$:
\[ \omega_{X^{I}}\boxtimes M \rightarrow M^{(I)} \]
which is compatible with factorization isomorphisms and is the
identity on $X^{I_0}$.
\item
For each $J\surj J'\surj I$ in $\mathcal{S}_{I_0}$ we have
\[ c_{[J/J']} =
c_{[J/I]}\left(\underset{i\in I-I_0}{\boxtimes} c_{[J_i/J'_i]} \boxtimes
c_{[J_{\star}/J'_{\star}]}\right) \]
(where $c_{[J_i/J'_i]}$ are
the factorization isomorphisms for $\mathcal{A}$) and the following
diagram commutes
\[ \xymatrix{
\Delta^{(J/J')!} j^{(J/I)!}
(\boxtimes_{i\in I-I_0} \mathcal{A}^{(J_i)} \boxtimes
M^{(J_{\star})})[|J|-|J']] \ar[r]^-{c_{[I/J]}}
\ar[d]_-{\boxtimes_{i\in I-I_0}
  (\nu^{J_i/J'_i})\ \boxtimes\ \nu^{J_{\star}/J'_{\star}}
} & \Delta^{(J/J')!} j^{(J/I)!}
M^{(J)}[|J|-|J'|] \ar[d]^-{\nu^{(J/J')}}
\\ j^{(J'/I)!} ( \boxtimes_{i\in I-I_0}
\mathcal{A}^{(J'_i)} \boxtimes M^{(J'_{\star})})
\ar[r]^-{c_{[J'/I]}} &
j^{(J'/I)!} M^{(J')} } \]
\end{enumerate}
\end{defn}

\begin{rem}
\begin{enumerate}
\item
If $M$ is a factorization $\mathcal{A}$ module on $X^{I_0}$
then each $M^{J_0}$ for $I_0\subset J_0$ is also a chiral
$\mathcal{A}$ module on $X^{J_0}$.
\item
Suppose $M$ and $N$ are factorization $\mathcal{A}$ modules on $X^{I_0}$ and
$X^{J_0}$ respectively.  Then the sheaf $j_\*j^!(M\boxtimes N)$, where $j:
U\rightarrow X^{I_0\sqcup J_0}$ is the open subset on which the first $I_0$
components are distinct from the last $J_0$, is a chiral $\mathcal{A}$
module on $X^{I_0 \sqcup J_0}$.  We should think of this module as representing
the tensor product of $M$ and $N$ in the chiral category of
$\mathcal{A}$ modules.
\item
Given a surjective map of finite sets $J_0\surj I_0$, we have inclusion maps \[ T_I: X^{I} \rightarrow X^{J} \]
for $(I_0\subset I) \in \mathcal{S}_{I_0}$ and $J = I\cup_{I_0} J_0$.
For a factorization module $\{M^{(I)}\}$ on $X^{I_0}$, the collection of $(D\boxtimes \mathcal{O}_{J_0})$ modules given by $\{T_{I\bullet}(M^{(I)})\}$ form a factorization module on $X^{J_0}$.  This defines the direct image functor from factorization modules on $X^{I_0}$ to those on $X^{J_0}$.
\end{enumerate}
\end{rem}

\subsection{Chiral Modules}
We can now define a chiral module $M$ on $X^{I_0}$
over a chiral algebra $\mathcal{A}$ in terms of certain chiral
operations.

\begin{defn}
Let $\mathcal{A}$ be a chiral algebra, $I_0$ a finite set.  A chiral $\mathcal{A}$ module on $X^{I_0}$ is a quasi-coherent sheaf $M$ on $X^{I_0}$ along with a map
\[ \mu^{I_0}: j_\* j^! (\mathcal{A}\boxtimes M) \rightarrow \Gamma_{(I_0\subset \tilde{[1]})}(M) \]
such that the following conditions are satisfied.
\begin{enumerate}
\item {\bf (Unit)}
The following diagram commutes:
\begin{equation}\label{unit}
\xymatrix{ j_\*j^!(\omega \boxtimes M) \ar[r]\ar[d] &
  j_\*j^!(\mathcal{A}\boxtimes M)\ar[d] \\ \Gamma_{(I_0\subset \tilde{[1]})}(M) \ar@{=}[r] & \Gamma_{(I_0\subset \tilde{[1]})}(M) }
\end{equation}
\item {\bf (Lie action)}
Note that we have a canonical map
\[ j^{(\tilde{[2]})}_\* j^{(\tilde{[2]}) !} (\mathcal{A}\boxtimes \Gamma_{(I_0\subset \tilde{[1]})}(M)) \rightarrow \Gamma_{(\tilde{[1]}\subset \tilde{[2]})}(j_\* j^!(\mathcal{A}\boxtimes M)) .\]
Composing this with $\Gamma_{(I_0\subset \tilde{[1]})}$ applied to the action map, we obtain a map
\[ j_\* j^!(\mathcal{A}\boxtimes \Gamma_{(I_0\subset \tilde{[1]})}(M)) \rightarrow \Gamma_{(I_0\subset \tilde{[2]})}(M) .\]
Thus we can compose the action map in several ways:
\begin{align*}
\mu_{1,\{2,3\}}&: j_\*j^!(\mathcal{A}\boxtimes \mathcal{A}\boxtimes M) \cong j_\*j^!(\mathcal{A}\boxtimes j_\*j^!(\mathcal{A}\boxtimes M)) \rightarrow j_\*j^!(\mathcal{A}\boxtimes \Gamma_{(I_0\subset \tilde{[1]})}(M))\rightarrow \Gamma_{(I_0\subset \tilde{[2]})}(M) \\
\mu_{2,\{1,3\}} &= \mu_{1,\{2,3\}} \circ \sigma_{12}^!:  j_\*j^!(\mathcal{A}\boxtimes \mathcal{A}\boxtimes M) \rightarrow \Gamma_{(I_0\subset \tilde{[2]})}(M) \\
\mu_{\{1,2\},3} &: j_\* j^!(\mathcal{A}\boxtimes\mathcal{A}\boxtimes M) \cong j_\* j^!( j_\* j^!(\mathcal{A}\boxtimes \mathcal{A}) \boxtimes M) \rightarrow j_\*j^!(\Delta_\*(\mathcal{A}\boxtimes M)) \cong \\
& \cong \Delta_{12\*}(j_\*j^!(\mathcal{A}\boxtimes M)) \rightarrow \Delta_{12\*} \Gamma_{(I_0\subset \tilde{[1]})}(M) \rightarrow \Gamma_{(I_0\subset \tilde{[2]})}(M).
\end{align*}
We demand that
\begin{equation}\label{lieact}
\mu_{\{1,2\},3} = \mu_{2,\{1,3\}} - \mu_{1,\{2,3\}} .
\end{equation}
\end{enumerate}
\end{defn}

\subsection{Equivalence of Categories}
Our main goal in this section is to prove the following result showing that the notions of a factorization and chiral module agree:

\begin{thm}\label{equiv} Let $\mathcal{A}$ be a chiral algebra.  As categories over quasi-coherent sheaves on $X^{I_0}$, the categories of chiral $\mathcal{A}$-modules and factorization $\mathcal{A}$-modules are equivalent.
\end{thm}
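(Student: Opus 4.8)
The plan is to construct a functor in each direction, compatible with the projections to $\mathcal{M}(X^{I_0})$ sending a factorization module $\{M^{(\tilde{I})}\}$ to $M^{(I_0)}$ and a chiral module $(M,\mu)$ to $M$, and to show the two functors are mutually inverse. The direction from factorization modules to chiral modules is the straightforward one, and I would carry it out first. Given $\{M^{(\tilde{I})}\}$, I would extract the chiral operation from the one-extra-point level: the factorization isomorphism $c$ for the surjection $\tilde{[1]}\surj[1]^+$ identifies $j^{(\tilde{[1]})!}(\mathcal{A}\boxtimes M)$ with $j^{(\tilde{[1]})!}M^{(\tilde{[1]})}$ on the locus where the extra point avoids the $I_0$-coordinates, hence $j_\*j^!(\mathcal{A}\boxtimes M)\cong j_\*j^! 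M^{(\tilde{[1]})}$. Since $M^{(\tilde{[1]})}$ has no local sections on the diagonals and $\Delta^{(\pi)!}M^{(\tilde{[1]})}[1]\cong M$ via the structure maps $\nu^{(\pi)}$, the boundary map of the exact sequence relating $M^{(\tilde{[1]})}$, $j_\*j^! M^{(\tilde{[1]})}$ and $\Gamma_{(I_0\subset\tilde{[1]})}(M)$ supplies $\mu^{I_0}$. The unit axiom (\ref{unit}) is immediate from the factorization unit, and the Lie action identity (\ref{lieact}) follows by applying associativity condition (2) at the two-extra-point level and decomposing the diagonal contribution using Lemma \ref{gfact}.

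The reverse direction is the genuinely hard one. Given $(M,\mu)$ I would reconstruct the whole collection $\{M^{(\tilde{I})}\}$ by induction on the number $|I|$ of extra points. Away from the diagonals that meet the $I_0$-coordinates, factorization forces $M^{(\tilde{I})}$ to coincide with an external product of copies of $\mathcal{A}$ with $M$; the work is in extending across the diagonals. I would realize $M^{(\tilde{I})}$ as the relevant term of a Chevalley--Cousin-type complex whose entries are $j_\*j^!$ of such external products over successively larger diagonal strata, with differential assembled from the chiral bracket of $\mathcal{A}$ and the operation $\mu$. The Lie action axiom (\ref{lieact}) is exactly the Jacobi-type relation ensuring that the composite differential vanishes, so that the complex is well defined; the compatibility isomorphisms $\nu^{(\pi)}$ that make $\{M^{(\tilde{I})}\}$ an object of $\mathcal{M}(Ran_{I_0})$ come from the transitivity property of $\Gamma$ proved at the start of this section.

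It then remains to equip the reconstructed collection with the factorization isomorphisms $c_{[J/I]}$ and to verify condition (2). Over the open set $U^{(J/I)}$ the complex computing $M^{(J)}$ splits, by the factorization property (\ref{gammafact}) and its consequence Lemma \ref{gfact}, as the external product of the corresponding complexes attached to the blocks $J_i$ and $J_\star$, which produces $c_{[J/I]}$; the associativity condition for a chain $J\surj J'\surj I$ reduces, applying Lemma \ref{gfact} one diagonal at a time, to the compatibility of $\mu$ with the chiral bracket, i.e.\ once more to (\ref{lieact}). Finally I would check that the two functors are mutually inverse: re-extracting $\mu$ from the reconstructed one-extra-point module returns the original operation by construction, and, conversely, a factorization module is already determined off the diagonals while its diagonal extensions satisfy the very complex used in the reconstruction, so the reconstructed collection recovers each $M^{(\tilde{I})}$. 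Both functors commute with passage to the underlying sheaf on $X^{I_0}$, giving the asserted equivalence over $\mathcal{M}(X^{I_0})$.

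The \textbf{main obstacle} is the reconstruction: organizing the gluing across all diagonal strata into a single complex, and proving that the infinitely many associativity constraints on the factorization isomorphisms are all generated by the one Lie action relation (\ref{lieact}). Lemma \ref{gfact} is the combinatorial engine that lets the induction on the number of extra points close up, but controlling the homotopy coherence of these identities in the derived setting, rather than merely their validity up to isomorphism, is the most delicate point.
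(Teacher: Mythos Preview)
Your proposal is essentially correct and follows the same overall strategy as the paper: extract $\mu^{I_0}$ from the Cousin sequence of $M^{(\tilde{[1]})}$ in one direction, and in the other direction reconstruct the $M^{(\tilde{I})}$ inductively as Cousin-type complexes built from $\mu$ and the chiral bracket, with Lemma~\ref{gfact} supplying the factorization behavior.

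One organizational point is worth noting, since it addresses exactly the obstacle you flag. You worry about showing that ``the infinitely many associativity constraints on the factorization isomorphisms are all generated by the one Lie action relation~(\ref{lieact}).'' The paper sidesteps this by a clean recursion: rather than building the full Cousin complex for $M^{(\tilde{[n]})}$ directly from $M$ and verifying $d^2=0$ at every level, it defines $M^{(\tilde{[1]})}$ as the kernel of $\mu^{I_0}$ (surjectivity of $\mu^{I_0}$ follows from the unit axiom, which you do not mention but need) and then shows that $M^{(\tilde{[1]})}$ is itself a chiral $\mathcal{A}$-module on $X^{\tilde{[1]}}$. Establishing the Lie action axiom for $M^{(\tilde{[1]})}$ requires only the Jacobi identity for $\mathcal{A}$ together with~(\ref{lieact}) for $M$, checked via an explicit embedding of the three-point Cousin complex. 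One then iterates: $M^{(\tilde{[n]})}$ is built from $M^{(\tilde{[n-1]})}$ by the same two-term construction. In this way no verification beyond the three-point level is ever needed, and the higher associativities come for free. Your concern about homotopy coherence in the derived setting is also somewhat misplaced here: the argument is carried out in $\mathcal{M}(Ran_{I_0})$, where the structure isomorphisms are strict.
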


First, let's consider a decomposition of a factorization module.  Let $M$ be a factorization module on $X^{I_0}$ for a chiral algebra $\mathcal{A}$.  On $X\times X^{I_0}$ consider the Cousin complex for the sheaf
$M^{\tilde{[1]}}$ with respect to the stratification given by
\[ i: H_{(I_0\subset \tilde{[1]})} \hookrightarrow X\times X^{I_0} \hookleftarrow U^{\tilde{([1])}} : j \]
Since $M^{\tilde{[1]}}$ does not have local sections along $H$, we have an exact sequence
\begin{equation}\label{cousin} 0 \rightarrow M^{\tilde{[1]}} \rightarrow j_\*j^!(\mathcal{A}\boxtimes M) \rightarrow i_{\*}i^{!} M^{\tilde{[1]}}[1] \rightarrow 0 .
\end{equation}

\begin{lemma}\label{Gamma} Let $M$ be a factorization $\mathcal{A}$-module on $X^{I_0}$.  Then there is a natural isomorphism $i_{\*}i^! M^{\tilde{[1]}}[1] \simeq \Gamma_{(I_0\subset \tilde{[1]})}(M)$.
\end{lemma}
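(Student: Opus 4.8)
The plan is to reduce the statement to a comparison of $!$-restrictions along $H := H_{(I_0\subset\tilde{[1]})}$ and then to verify that comparison stratum-by-stratum using the $\mathcal{M}(Ran_{I_0})$-structure maps. First I would unwind both sides into the same shape. By the definition of $\Gamma$ together with the exact sequence following it, $\Gamma_{(I_0\subset\tilde{[1]})}(M) = i_\* i^! p_2^!(M)$, while the Cousin sequence (\ref{cousin}) exhibits $i_\* i^! M^{\tilde{[1]}}[1]$ as $\mathrm{cofib}(M^{\tilde{[1]}}\to j_\* j^!(\mathcal{A}\boxtimes M))$, where I have used the factorization isomorphism $c$ to identify $j^! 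M^{\tilde{[1]}}\simeq j^!(\mathcal{A}\boxtimes M)$. Since $i$ is a closed embedding, and hence $i_\*$ is conservative, it suffices to produce a natural isomorphism $i^! M^{\tilde{[1]}}[1]\simeq i^! p_2^!(M)$ of objects on $H$. I would also record the useful fact that, writing $q := p_2\circ i\colon H\to X^{I_0}$, one has $i^! p_2^!(M) = q^!(M)$, and that $q$ is finite: over $(y_j)\in X^{I_0}$ its fibre is the finite set $\{y_k\}_{k\in I_0}$.

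The second step is the local model along a single diagonal. The components of $H$ are the smooth divisors $H_k = \{z = y_k\}$, $k\in I_0$, and the inclusion $H_k\hookrightarrow X^{\tilde{[1]}}$ is the diagonal map $\Delta^{(\pi_k)}$ attached to the surjection $\pi_k\colon\tilde{[1]}\surj I_0$ collapsing the extra point onto $k\in I_0$. On $H_k$ the two sides agree tautologically: on one hand $p_2\circ\Delta^{(\pi_k)} = \mathrm{id}_{X^{I_0}}$, so $i^! p_2^!(M)$ restricts to $M$; on the other hand the defining structure isomorphism of $M\in\mathcal{M}(Ran_{I_0})$ is exactly $\nu^{(\pi_k)}\colon\Delta^{(\pi_k)!}M^{\tilde{[1]}}[1]\isomarrow M$. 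Thus $i^! M^{\tilde{[1]}}[1]$ and $i^! p_2^!(M)$ become canonically identified after restriction to each component $H_k$, via the maps $\nu^{(\pi_k)}$.

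The third step is to glue these component-wise identifications over the singular union $H = \bigcup_k H_k$. I would use the Mayer--Vietoris (\v{C}ech) description of $i^!$ for a union of closed subschemes, which expresses $i^! M^{\tilde{[1]}}$ and $i^! p_2^!(M)$ in terms of their $!$-restrictions to the closed strata $H_S = \bigcap_{k\in S}H_k$, $\varnothing\neq S\subset I_0$. Each $H_S$ is the diagonal $\{z = y_k,\ k\in S\}$, and $q$ carries it isomorphically onto the diagonal $\Delta_S\subset X^{I_0}$; hence on $H_S$ both systems compute $\Delta_S^! M$. For $M^{\tilde{[1]}}$ this reading of $i_{H_S}^!$ comes from iterating $\nu^{(\pi_k)}$ (restrict to $H_k$, then to the further diagonal $\{y_k = y_\ell\}$ inside $H_k\cong X^{I_0}$), and the claim is that the transition maps of the two \v{C}ech systems coincide. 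This is precisely what the compatibility axiom for a factorization module guarantees: the commuting square relating the factorization isomorphisms $c$ and the diagonal restrictions $\nu$ (condition (ii) of the definition) matches the descent data for $M^{\tilde{[1]}}$ along the diagonals with the tautological descent data for $q^! M$. Alternatively, one can organize this as an induction on $|I_0|$: the factorization property of $\Gamma$ (Lemma \ref{gfact}), read off from (\ref{gammafact}), together with the factorization structure of $M$, splits both sides over the open cover $\{U_\alpha\}$ indexed by partitions $\alpha$ of $I_0$ into products of strictly smaller cases, and the resulting isomorphisms agree on overlaps by the uniqueness built into the factorization axioms.

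The part I expect to be the main obstacle is exactly this last gluing over the singular locus of $H$ --- concretely, the small diagonal where all the points $y_k$ collide, which is not covered by the product charts $U_\alpha$ and where factorization alone does not separate the variables. Controlling the descent data there forces one to use the full strength of the factorization-module compatibility (condition (ii)) rather than just the pointwise isomorphisms $\nu^{(\pi_k)}$. The cleanness hypothesis, that each $M^{\tilde{I}}$ has no local sections supported on the diagonals, must also be invoked to ensure that the relevant $i^!$ are concentrated in the single cohomological degree that makes (\ref{cousin}) and the $\Gamma$-sequence genuine short exact sequences, so that the triangle-level comparison descends to an isomorphism in the heart.
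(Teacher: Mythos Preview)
Your approach is workable but takes a genuinely harder route than the paper, because you never use the unit. The paper's proof is essentially two lines: the unit axiom of a factorization module supplies a global map $\omega_X\boxtimes M\to M^{\tilde{[1]}}$, and since $p_2^!(M)[-1]\simeq\omega_X\boxtimes M$ one has $\Gamma_{(I_0\subset\tilde{[1]})}(M)=i_\*i^!(\omega_X\boxtimes M)[1]$; applying $i_\*i^![1]$ to the unit map therefore produces, directly and globally, a morphism $\Gamma_{(I_0\subset\tilde{[1]})}(M)\to i_\*i^!M^{\tilde{[1]}}[1]$. With a global map in hand, one only has to check it is an isomorphism, and for that it suffices to $!$-restrict along each diagonal $\Delta^{(\pi_k)}\colon X^{I_0}\hookrightarrow X^{\tilde{[1]}}$, where the unit axiom says the map becomes the identity on $M$.

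By contrast, you try to manufacture the isomorphism out of the componentwise identifications $\nu^{(\pi_k)}$ via a \v{C}ech/Mayer--Vietoris description of $i^!$ on the singular union $H=\bigcup_k H_k$. The obstacle you correctly single out --- matching the transition data on all higher intersections $H_S$, including the small diagonal not covered by the factorization charts --- is precisely the work that the unit map lets you skip: once a single global morphism exists, no descent data need be compared, and checking ``iso'' is local. Your argument can be pushed through (the compatibilities among the $\nu^{(\pi)}$ built into the $\mathcal{M}(Ran_{I_0})$ structure do control the \v{C}ech transition maps), but it is substantially longer and more delicate than necessary. The moral is that the lemma is really a statement about the \emph{unit} of the factorization module, not just about its underlying $Ran_{I_0}$-sheaf.
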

\begin{proof}
The unit map gives a map $\omega_X\boxtimes M \rightarrow M^{\tilde{[1]}}$ which is compatible with factorization.  We thus have a commutative diagram
\[ \xymatrix{j_\*j^!(\omega\boxtimes M) \ar[r]\ar[d] & j_\* j^!(M^{\tilde{[1]}}) \ar[d] \\ \Gamma_{(I_0\subset \tilde{[1]})}(M) \ar[r] & i_\*i^!(M^{\tilde{[1]}})[1]} \]
We wish to show that the bottom map is an isomorphism.  Since both $(D\boxtimes \mathcal{O})$-modules are supported on the union of diagonal embeddings $X^{I_0} \rightarrow X^{\tilde{[1]}}$, it suffices to check that the map is an isomorphism when pulled back to each $X^{I_0}$.
By the unit axiom, these pullbacks give the identity map on $M$.
\end{proof}

\begin{proof}[Proof of Theorem \ref{equiv}]
Let $M$ be a factorization $\mathcal{A}$-module. By (\ref{cousin}) and Lemma \ref{Gamma}, we have that $M^{\tilde{[1]}}$ is quasi-isomorphic to the complex
\begin{equation}\label{chiralmult}
\xymatrix{j_\*j^!(\mathcal{A}\boxtimes M)\ar[r]^-{\mu^{I_0}}& \Gamma_{(I_0\subset \tilde{[1]})}(M)}
\end{equation}
We wish to show that this map satisfied the unit and Lie action axioms.  The unit axiom clearly follows from the unit axiom for factorization modules.\\
\\
In what follows the following notation will be convenient.  Let $I$ be a finite set with $|I|=n$, and let $I^+$ be the pointed set $I\sqcup \{\star\}$ as before.  For a sequence $(\a_1,\ldots,\a_n)$ with $\a_i\in I^+$, let
\[ j_{(\a_1,\ldots, \a_n)}: U_{(\a_1,\ldots, \a_n)} \rightarrow X^{\tilde{I}} \]
be the inclusion of the open set corresponding to the partition $\tilde{I}\surj \{\a_i, \star\}$ given by $i\mapsto \a_i$ for $i\in I$.\\
\\
Let $C(\mathcal{A}, M)^{\tilde{[2]}}$ be the Cousin complex of $M^{\tilde{[2]}}$ with respect to the stratification given by the diagonals:
\begin{equation}\label{c2}
j_{(1,2)\*} j_{(1,2)}^!(\mathcal{A}\boxtimes\mathcal{A}\boxtimes M) \rightarrow 
\begin{array}{c}
j_{(1,\star)\*}j_{(1,\star)}^! (\mathcal{A}\boxtimes \Gamma_{(I_0\subset \tilde{[1]})}(M)) \\ \oplus
\\ j_{(\star, 1)\*}j_{(\star,1)}^!(\mathcal{A}\boxtimes \Gamma_{(I_0\subset \tilde{[1]})}(M)) \\ \oplus \\ 
j_{(1,1)\*}j_{(1,1)}^! \Delta_{12\*} (\mathcal{A}\boxtimes M)
\end{array} \rightarrow \Gamma_{(I_0\subset \tilde{[2]})}(M).
\end{equation}
The fact that this is a complex implies the Lie action identity for $\mu^{I_0}$.
\\

Now, let $M$ be a chiral $\mathcal{A}$-module.  We will construct a factorization module out of it by constructing the sheaves $M^{\tilde{[i]}}$ inductively.  From the identity axiom, we have that $\mu^{I_0}$ is surjective.  Therefore, the complex
\[ \xymatrix{j_\*j^!(\mathcal{A}\boxtimes M)\ar[r]^-{\mu^{I_0}}& \Gamma_{(I_0\subset \tilde{[1]})}(M)} \]
is quasi-isomorphic to a sheaf $M^{\tilde{[1]}}$ on $X^{\tilde{[1]}}$.  By induction, we need to show that $M^{\tilde{[1]}}$ is a chiral $\mathcal{A}$-module on $X^{\tilde{[1]}}$.\\
\\
Let $C(\mathcal{A},M)^{(\tilde{[2]})}$ be defined as in (\ref{c2}).  It is a complex because of the Lie action identity.
Similarly, let $C(\omega,\mathcal{A},M)$ denote the following complex
\begin{equation}
j_{(1,2)\*} j_{(1,2)}^!(\omega\boxtimes\mathcal{A}\boxtimes M) \rightarrow 
\begin{array}{c}
j_{(1,\star)\*}j_{(1,\star)}^! (\omega\boxtimes \Gamma_{(I_0\subset \tilde{[1]})}(M)) \\ \oplus
\\ j_{(\star, 1)\*}j_{(\star,1)}^!(\mathcal{A}\boxtimes \Gamma_{(I_0\subset \tilde{[1]})}(M)) \\ \oplus \\ 
j_{(1,1)\*}j_{(1,1)}^! \Delta_{12\*} (\mathcal{A}\boxtimes M)
\end{array} \rightarrow \Gamma_{(I_0\subset \tilde{[2]})}(M).
\end{equation}
From the unit axiom, we have a map of complexes $C(\omega,\mathcal{A},M)\rightarrow C(\mathcal{A},M)^{(\tilde{[2]})}$.  Observe that we have natural quasi-isomorphisms
\[ \left(j_{(1,2)\*} j_{(1,2)}^!(\omega\boxtimes\mathcal{A}\boxtimes M) \rightarrow j_{(1,\star)\*}j_{(1,\star)}^! (\omega\boxtimes \Gamma_{(I_0\subset \tilde{[1]})}(M)) \right) \simeq j_{(1,\star)\*}j_{(1,\star)}^! (\omega\boxtimes M^{(\tilde{[1]})}) \mbox{ and} \]

\[ \left(j_{(1,2)\*} j_{(1,2)}^!(\mathcal{A}\boxtimes\mathcal{A}\boxtimes M) \rightarrow j_{(1,\star)\*}j_{(1,\star)}^! (\mathcal{A}\boxtimes \Gamma_{(I_0\subset \tilde{[1]})}(M)) \right) \simeq j_{(1,\star)\*}j_{(1,\star)}^! (\mathcal{A}\boxtimes M^{(\tilde{[1]})}).  \]

Furthermore, it follows from Lemma \ref{gfact} that
\[\left(
\begin{array}{c}
j_{(\star, 1)\*}j_{(\star,1)}^!(\mathcal{A}\boxtimes \Gamma_{(I_0\subset \tilde{[1]})}(M)) \\ \oplus \\ 
j_{(1,1)\*}j_{(1,1)}^! \Delta_{12\*} (\mathcal{A}\boxtimes M)
\end{array} \rightarrow \Gamma_{(I_0\subset \tilde{[2]})}(M) \right) \simeq
\Gamma_{(\tilde{[1]}\subset \tilde{[2]})}(M^{(\tilde{[1]})}) .\]
Thus, $C(\mathcal{A},M)^{(\tilde{[2]})}$ is quasi-isomorphic to the complex
\[ j_{(1,\star)\*}j_{(1,\star)}^! (\mathcal{A}\boxtimes M^{(\tilde{[1]})}) \overset{\mu^{\tilde{[1]}}}{\longrightarrow} \Gamma_{(I_0\subset \tilde{[1]})}(M^{(\tilde{[1]})}) \]
which gives us the desired action map for $M^{(\tilde{[1]})}$ satisfying the unit axiom.\\
\\
To show that the Lie action condition is satisfied, we need to show that $C(\mathcal{A},M^{\tilde{[1]}})^{(\tilde{[3]})}$ is a complex.  We have embeddings
\[ \Gamma_{(\tilde{[1]}\subset \tilde{[2]})}(M^{\tilde{[1]}}) \lhook\joinrel\longrightarrow
\begin{array}{c}
 j_{(1,\star) \*}j_{(1,\star)}^!(\mathcal{A}\boxtimes \Gamma_{(I_0\subset \tilde{[1]})}(M))\\
  \oplus \\
 j_{(1,1) \*} j^!_{(1,1)} \Delta_{1,2 \*}(\mathcal{A}\boxtimes M)
 \end{array} \]
and
\[ \Gamma_{(\tilde{[1]} \subset \tilde{[3]})}(M^{\tilde{[1]}}) \lhook\joinrel\longrightarrow 
\begin{array}{c}
j_{(1,1,1)\*}j_{(1,1,1)}^! \Delta_{123\*}(\mathcal{A}\boxtimes M)\\ \oplus \\
 j_{(1,1,\star)\*}j_{(1,1,\star)}^! \Delta_{12\*}( \mathcal{A}\boxtimes M)\\ \oplus \\
 j_{(1,\star,1)\*}j_{(1,\star,1)}^! \Delta_{12\*}( \mathcal{A}\boxtimes M) \\ \oplus \\
  j_{(1,\star,\star)\*}j_{(1,\star,\star)}^! (\mathcal{A}\boxtimes \Gamma_{(I_0\subset \tilde{[2]})}(M))
\end{array} .\]
It follows that $C(\mathcal{A},M^{\tilde{[1]}})^{(\tilde{[3]})}$ embeds into
\[
j_{(1,2,3)\*} j_{(1,2,3)}^!(\mathcal{A}\boxtimes\mathcal{A}\boxtimes\mathcal{A}\boxtimes M) \longrightarrow 
\begin{array}{c}
\oplus_{\a \in P(1,1,2)} j_{\a\*}j_{\a}^! \Delta_{\a\*}(\mathcal{A}\boxtimes \mathcal{A}\boxtimes M) \\ \oplus
\\ \oplus_{\a \in P(\star,1,2)}\  j_{\a\*}j_{\a}^! (\mathcal{A}\boxtimes \mathcal{A}\boxtimes \Gamma_{(I_0\subset \tilde{[1]})}(M))
\end{array} \longrightarrow
\]
\[ \longrightarrow
\begin{array}{c}
\Delta_{123\*} (j_\*j^!(\mathcal{A}\boxtimes M)) \\ \oplus
\\ \oplus_{\a \in P(\star,1,1)}\  j_{\a\*}j_{\a}^! \Delta_{\a\*}(\mathcal{A}\boxtimes \Gamma_{(I_0\subset \tilde{[1]})}(M)) \\ \oplus
\\ \oplus_{\a \in P(1,\star,\star)} j_{\a\*}j_{\a}^! (\mathcal{A}\boxtimes\Gamma_{(I_0\subset \tilde{[2]})})(M)
\end{array} \longrightarrow \Gamma_{(I_0\subset \tilde{[3]})}(M)\]
where $P(A)$ is the set of cyclic permutations of $A$ and $\Delta_{\a}$ is the inclusion of the diagonal given by $\{x_i = x_j\mbox{ if } \a_i=\a_j\neq \star\}$.  This is a complex because of the Jacobi identity for $\mathcal{A}$
and the Lie action condition for $M$.\\
\\
We can now perform the induction step and construct $M^{\tilde{[n]}}$ for all $n$.  It remains to show that the $M^{\tilde{[n]}}$ do indeed form a factorization module.  We have that $M^{\tilde{[n]}}$ is quasi-isomorphic to the complex
\[ j_\*j^!(\mathcal{A}\boxtimes M^{\widetilde{[n-1]}}) \rightarrow \Gamma_{\left(\widetilde{[n-1]} \subset \widetilde{[n]}\right)}(M^{\widetilde{[n-1]}}) .\]
Suppose we have a diagonal $\Delta: X^{\tilde{[m]}}\rightarrow X^{\tilde{[n]}}$.  If the image of $\Delta$ lies in $H_{\left(\widetilde{[n-1]} \subset \widetilde{[n]}\right)}$, then we have a canonical isomorphism
\[ \Delta^{!} M^{\tilde{[n]}}[n-m] \simeq \Delta^{!} \Gamma_{\left(\widetilde{[n-1]} \subset \widetilde{[n]}\right)}(M^{\widetilde{[n-1]}})[n-m] \simeq M^{\tilde{[m]}} .\]
If the image of $\Delta$ is not in $H_{\left(\widetilde{[n-1]} \subset \widetilde{[n]}\right)}$, we have that $\Delta^{-1}(H_{\left(\widetilde{[n-1]} \subset \widetilde{[n]}\right)}) = H_{\left(\widetilde{[m-1]} \subset \widetilde{[m]}\right)}$.  It follows that
\[ \Delta^{!} M^{\tilde{[n]}}[n-m] \simeq \left( j_\*j^!(\mathcal{A}\boxtimes M^{\widetilde{[m-1]}}) \rightarrow \Gamma_{\left(\widetilde{[m-1]} \subset \widetilde{[m]}\right)}(M^{\widetilde{[m-1]}}) \right)\simeq M^{\tilde{[m]}} .\]
Thus, the $M^{\tilde{[n]}}$ give an object of $\mathcal{M}(Ran_{I_0})$.  Factorization follows from the fact that both $j_\*j^!(\mathcal{A}\boxtimes M^{\widetilde{[n-1]}})$ and $\Gamma_{\left(\widetilde{[n-1]} \subset \widetilde{[n]}\right)}(M^{\widetilde{[n-1]}})$ factorize appropriately.  Furthermore, these isomorphisms are clearly compatible with the unit.
\end{proof}

\section{Lie-* Algebras and Modules}\label{star}
Let $L$ be a Lie-* algebra on $X$.  We will give definitions of Lie-*
and chiral $L$-modules on $X^{I_0}$.  Chiral $L$-modules will be
equivalent to chiral $U(L)$ modules, where $U(L)$ is the chiral
envelope of $L$, and Lie-* modules will be equivalent to modules
over a certain sheaf of Lie algebras.

\subsection{Modules}
We can define chiral and Lie-* modules for a Lie-* algebra analogously to the definition of chiral modules for a chiral algebra.

\begin{defn}
Let $L$ be a Lie-* algebra.
\begin{itemize}
\item
A Lie-* $L$ module on $X^{I_0}$ is a
quasi-coherent sheaf $M$ on $X^{I_0}$ with a map
\[ \mu^{I_0}: L \boxtimes M \rightarrow \Gamma_{(I_0\subset \tilde{[1]})}(M) \]
such that $\mu_{\{1,2\},3} = \mu_{2,\{1,3\}} - \mu_{1,\{2,3\}}$
where
\begin{align*}
\mu_{1,\{2,3\}} &= \mu^{I_0}\circ\mu^{I_0}: L\boxtimes L\boxtimes M \rightarrow L\boxtimes \Gamma_{(I_0\subset \tilde{[1]})}(M)\rightarrow \Gamma_{(I_0\subset \tilde{[2]})}(M), \\
\mu_{2,\{1,3\}} &= \mu_{1,\{2,3\}} \circ \sigma_{12}^! \mbox{ , and} \\
\mu_{\{1,2\},3} &= \mu^{I_0} \circ \mu_L : L\boxtimes L\boxtimes M\rightarrow \Delta_\*(L)\boxtimes M \rightarrow \Delta_\*(\Gamma_{(I_0\subset \tilde{[1]})}(M)) \hookrightarrow \Gamma_{(I_0\subset\tilde{[2]})}(M) .
\end{align*}

\item
A chiral $L$ module on $X^{I_0}$ is a sheaf $M$ with
\[ \mu^{I_0}: j_\*j^!(L\boxtimes M) \rightarrow \Gamma_{(I_0\subset \tilde{[1]})}(M) \]
satisfying the following condition.  For $j': U' \hookrightarrow
X^2\times X^{I_0}$ the complement of the diagonals $\{y_1 = x_i\}$ and
$\{y_2 = x_i \}$, we have (similarly to the case of chiral modules
over a chiral algebra) the maps
\begin{align*}
\mu_{1,\{2,3\}} &: j'_\*j'^!(L\boxtimes L \boxtimes M) \rightarrow j_\*j^!(L\boxtimes L\boxtimes M) \rightarrow \Gamma_{(I_0\subset \tilde{[2]})}(M) \\
 \mu_{2,\{1,3\}} &= \mu_{1,\{2,3\}} \circ \sigma_{12}^!\ \  \mbox{and} \\
 \mu_{\{1,2\},3}&: j_\*'j^{'!}(L\boxtimes L\boxtimes M)\rightarrow \Delta_{12\*}j_\*j^!(L\boxtimes M)\rightarrow \Delta_{12\*} \Gamma_{(I_0\subset \tilde{[1]})}(M) \rightarrow \Gamma_{(I_0\subset \tilde{[2]})}(M) .
\end{align*}
We demand, as before, that
\[ \mu_{\{1,2\},3} = \mu_{2,\{1,3\}} - \mu_{1,\{2,3\}}. \]
\end{itemize}
\end{defn}

\subsection{Topological Algebras}
We define sheaves of topological Lie algebras which give another description of chiral and Lie-* modules for a Lie-* algebra.\\
\\
Consider
\[ H = H_{(I_0\subset \tilde{[1]})} \overset{i}{\longrightarrow} X\times X^{I_0} \overset{p}{\longrightarrow} X^{I_0} .\]
Let $M$ be a $(D_X\boxtimes \mathcal{O}_{X^{I_0}})$-module.  Define
\[ h_{\Gamma}(M) := \underset{\xi\in \Xi}{\mbox{lim }} p_{\*} (M/M_\xi) \]
where $\Xi$ is the projective system of submodules $M_{\xi}\subset M$ such that the quotient $M/M_\xi$ is supported on $H$.  We have that $h_{\Gamma}(M)$ is a pro-quasi-coherent sheaf on $X^{I_0}$, which we can also regard as a sheaf of topological $\mathcal{O}_{X^{I_0}}$-modules by completing.  The functor $h_\Gamma$ is a relative version of the functor $h$ in \cite[\S 2.2]{CHA}.

\begin{lemma}
Let $M$ be a $(D_X\boxtimes \mathcal{O}_{X^{I_0}})$-module and $N,N'$ quasi-coherent sheaves on $X^{I_0}$.  We then have a natural isomorphism
\[ Hom(M\otimes p^!(N)[-1],\Gamma_{(I_0\subset \tilde{[1]})}(N')) \simeq Hom(h_\Gamma(M)\otimes N, N') \]
where the Hom on the right-hand side is taken in the category of pro-quasi-coherent sheaves (or equivalently continuous homomorphisms for $N$ endowed with the discrete topology).
\end{lemma}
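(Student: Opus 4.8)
The plan is to convert each side into a filtered colimit of ordinary $\mathrm{Hom}$'s of quasi-coherent sheaves on $X^{I_0}$ and match them term by term; the essential input is coherent duality for the finite map $\bar{p} := p\circ i\colon H\to X^{I_0}$.

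The shift $[-1]$ is calibrated so that $p^!(N)[-1]$ lies in the heart (locally it is the line bundle $\omega_X$ tensored with $p^*N$), so both arguments of the left-hand $\mathrm{Hom}$ lie in the heart and we may work in the abelian categories of $(D\boxtimes\mathcal{O})$-modules and quasi-coherent sheaves. I would first reduce to $N=\mathcal{O}_{X^{I_0}}$ by replacing $M$ with $M\otimes p^*(N)$: one has $(M\otimes p^*(N))\otimes\omega_X = M\otimes p^!(N)[-1]$, while the projection formula gives $h_\Gamma(M\otimes p^*(N))\simeq h_\Gamma(M)\otimes N$, so the general statement follows from the case $N=\mathcal{O}$. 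I would then rewrite the target via $i^!p^! = \bar{p}^!$, so that $\Gamma_{(I_0\subset\tilde{[1]})}(N') = i_\*i^!p^!(N') = i_\*\bar{p}^!(N')$ is supported (in the $X$-direction) on $H$, and observe that $\bar{p}$ is finite — each component $\{x_1=y_k\}$ of $H$ maps isomorphically to $X^{I_0}$ — so that $\bar{p}_\*$ is exact and admits the duality right adjoint $\bar{p}^!$.

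The crux is the reduction of the source. Because the target $i_\*\bar{p}^!(N')$ is supported on $H$, any morphism out of $M\otimes\omega_X$ has image a quotient supported on $H$, and hence factors through $(M/M_\xi)\otimes\omega_X$ for some $M_\xi\in\Xi$ (twisting by the line bundle $\omega_X$ is exact and identifies the $H$-supported quotients of $M\otimes\omega_X$ with those of $M$). As $\Xi$ is filtered, since $M_\xi\cap M_{\xi'}\in\Xi$, this yields
\[ \mathrm{Hom}\big(M\otimes\omega_X,\,i_\*\bar{p}^!(N')\big)\;\simeq\;\varinjlim_{\xi\in\Xi}\mathrm{Hom}\big((M/M_\xi)\otimes\omega_X,\,i_\*\bar{p}^!(N')\big). \]
For each $\xi$, the module $M/M_\xi$ is supported on $H$, which is finite over $X^{I_0}$, so Kashiwara's theorem applied to the smooth components gives $M/M_\xi\simeq i_\*\mathcal{F}_\xi$ with $p_\*(M/M_\xi)=\bar{p}_\*\mathcal{F}_\xi$ quasi-coherent. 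The projection formula for the closed embedding $i$, full faithfulness of $i_\*$, and the adjunction $\bar{p}_\*\dashv\bar{p}^!$ then identify the $\xi$-term with $\mathrm{Hom}_{X^{I_0}}(p_\*(M/M_\xi),\,N')$. Passing to the colimit and invoking the definition $h_\Gamma(M)=\varprojlim_\xi p_\*(M/M_\xi)$ — whose continuous homomorphisms to the discrete sheaf $N'$ are precisely $\varinjlim_\xi\mathrm{Hom}(p_\*(M/M_\xi),N')$ — produces the asserted isomorphism. Naturality in $M$, $N$, $N'$ is automatic since every step is functorial.

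I expect the main obstacle to be this colimit step: verifying that $\mathrm{Hom}$ out of $M\otimes\omega_X$ into an $H$-supported target is computed by the filtered colimit over $\Xi$, and that this colimit is exactly the continuous homomorphisms out of the pro-object $h_\Gamma(M)$. This is where the pro-quasi-coherent (topological) nature of $h_\Gamma$ is indispensable. A secondary, more technical obstacle is bookkeeping the shift $[-1]$ against the relative dualizing sheaves entering $p^!$, $\bar{p}^!$ and the two projection formulas: on each component $\{x_1=y_k\}$ one must check that the twist $i^*\omega_X$ produced by the projection formula for $i$ is exactly absorbed by the right $D$-module pushforward defining $p_\*$ (equivalently, by coherent duality for $\bar{p}$), so that no residual twist survives in $\mathrm{Hom}(h_\Gamma(M)\otimes N, N')$.
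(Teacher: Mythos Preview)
Your argument is correct and follows essentially the same route as the paper's: reduce to the colimit over $\Xi$ using that the target is supported on $H$, apply Kashiwara for the closed embedding $i$, and then use the $(\bar p_\*,\bar p^!)$ adjunction for the finite/proper map $H\to X^{I_0}$. The only real difference is in how $N$ is handled: the paper first reduces to $N$ a vector bundle (both sides take colimits in $N$ to limits) and then uses duality to move $N^\vee$ across, whereas you absorb $p^*N$ into $M$ and invoke a projection formula $h_\Gamma(M\otimes p^*N)\simeq h_\Gamma(M)\otimes N$ of pro-objects; your reduction is slightly more direct but requires the cofinality of $\{\mathcal I_H^n(M\otimes p^*N)\}$ among $H$-supported quotients, which is straightforward.
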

\begin{proof}
Both functors
\[ Hom(M\otimes p^{!}(-)[-1],\Gamma_{(I_0\subset \tilde{[1]})}(N')) \mbox{ and } Hom(h_\Gamma(M)\otimes -, N') \]
take colimits to limits.  Therefore, we can assume without loss of generality that $N$ is a vector bundle.  By duality, we have
\[ Hom(M\otimes p^{!}(N)[-1], \Gamma_{(I_0\subset \tilde{[1]})}(N')) \simeq Hom(M, p^!(N^{\vee})[-1]\otimes \Gamma_{(I_0\subset \tilde{[1]})}(N')) \]
Now, since $p^!(N^{\vee})[-1]\otimes \Gamma_{(I_0\subset \tilde{[1]})}(N')$ is supported on $H$, we have
\[ Hom(M, p^!(N^{\vee})[-1]\otimes \Gamma_{(I_0\subset \tilde{[1]})}(N')) \simeq \underset{\xi\in \Xi}{\mbox{colim }} Hom (M/M_{\xi}, p^!(N^{\vee})[-1]\otimes \Gamma_{(I_0\subset \tilde{[1]})}(N')) \simeq \]
\[ \simeq \underset{\xi\in \Xi}{\mbox{colim }} Hom (M/M_{\xi}, \Gamma_{(I_0\subset \tilde{[1]})}(N^{\vee}\otimes N')) \]
Since both sides of the Hom are supported on $H$, we have by Kashiwara's lemma
\[ Hom (M/M_{\xi}, \Gamma_{(I_0\subset \tilde{[1]})}(N^{\vee}\otimes N')) \simeq Hom (i^!(M/M_{\xi}), \pi^!(N^{\vee}\otimes N')) \]
where $\pi: H \rightarrow X^{I_0}$ is the projection map, i.e. $\pi = p\circ i$.  Since $H$ is proper over $X^{I_0}$, we have by adjunction and the projection formula
\[ Hom (i^!(M/M_{\xi}), \pi^!(N^{\vee}\otimes N')) \simeq Hom(p_{\*}(M/M_{\xi}), N^{\vee}\otimes N') .\]
Now, by duality
\[ Hom(p_{\*}(M/M_{\xi}), N^{\vee}\otimes N') \simeq Hom(p_{\*}(M/M_{\xi})\otimes N, N') \]
and therefore we have
\[ Hom(M \otimes p^!(N)[-1], \Gamma_{(I_0\subset \tilde{[1]})}(N')) \simeq \underset{\xi\in \Xi}{\mbox{colim }} Hom(p_{\*}(M/M_{\xi})\otimes N, N') \simeq Hom(h_{\Gamma}(M)\otimes N, N') \]
where the Hom on the right hand side is the set of continuous maps.
\end{proof}

Now, consider
\[ \xymatrix{H_{(I_0\subset \tilde{[1]})} \ar[r]^-{i} & X \times X^{I_0}\ar[dl]_{p_1} \ar[dr]^{p_2} & \ar[l]_-{j} U^{(\tilde{[1]})} \\
X & & X^{I_0} }
\]
\\
For a Lie-* algebra $L$, let
\[ \mathcal{L}_0^{(I_0)}:= h_\Gamma(p_1^!(L)[-I_0]) \ \ \ \mbox{ and } \ \ \ \ \mathcal{L}^{(I_0)}:= h_\Gamma(j_\*j^!(p_1^!(L)[-I_0])). \]
We regard these as sheaves of topological $\mathcal{O}_{X^{I_0}}$-modules.  In fact they each have the structure of a Lie algebra (continuous in each variable) coming from the Lie-* algebra structure on $L$.  In what follows, modules over a sheaf of topological Lie algebras will always mean a quasi-coherent sheaf which is a module for the Lie algebra compatible with the $\mathcal{O}_{X^{I_0}}$-module structure such that the action map is continuous with respect to the discrete topology on the module.\\
\\
Note that at a point $(x_i)\in X^{I_0}$, we have the fibers
\[ \mathcal{L}_{0\ (x_i)}^{(I_0)} = H_{dR}\left(\bigcup_{i\in I_0} D_{x_i}, L\right) \mbox{ and } \mathcal{L}^{(I_0)}_{(x_i)} = H_{dR}\left(\bigcup_{i\in I_0} D^{\times}_{x_i}, L\right) \]
where $D_{x_i}$ (resp. $D^{\times}_{x_i}$) is the formal disk (resp. formal punctured disk) around $x_i \in X$.

\begin{lemma}\label{top}
Let $L$ be a Lie-* algebra.  Then the category of Lie-* (resp. chiral) $L$ modules on $X^{I_0}$ is equivalent to the category of Lie $\mathcal{L}^{(I_0)}_0$ (resp. $\mathcal{L}^{(I_0)}$) modules.
\end{lemma}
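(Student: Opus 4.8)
The plan is to turn each action map $\mu^{I_0}$ into a continuous action of the topological Lie algebra by feeding it through the $\mathrm{Hom}$-isomorphism of the preceding Lemma. First note that since $p_1$ is smooth of relative dimension $|I_0|$ and $p=p_2$ is smooth of relative dimension $1$, one has $p_1^!(L)[-I_0]\otimes p^!(M)[-1]\cong L\boxtimes M$; applying the projection formula to the open embedding $j$ gives likewise $j_\*j^!(p_1^!(L)[-I_0])\otimes p^!(M)[-1]\cong j_\*j^!(L\boxtimes M)$. Taking $N=N'=M$ and letting the $D_X\boxtimes\mathcal{O}_{X^{I_0}}$-module in the preceding Lemma be $p_1^!(L)[-I_0]$ (resp. $j_\*j^!(p_1^!(L)[-I_0])$) then yields natural isomorphisms
\[ \mathrm{Hom}\bigl(L\boxtimes M,\, \Gamma_{(I_0\subset \tilde{[1]})}(M)\bigr)\simeq \mathrm{Hom}\bigl(\mathcal{L}^{(I_0)}_0\otimes M,\, M\bigr) \]
\[ \mathrm{Hom}\bigl(j_\*j^!(L\boxtimes M),\, \Gamma_{(I_0\subset \tilde{[1]})}(M)\bigr)\simeq \mathrm{Hom}\bigl(\mathcal{L}^{(I_0)}\otimes M,\, M\bigr), \]
the right-hand sides being continuous $\mathcal{O}_{X^{I_0}}$-linear maps. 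Hence a Lie-* (resp. chiral) action datum on $M$ is the same as a continuous action of $\mathcal{L}^{(I_0)}_0$ (resp. $\mathcal{L}^{(I_0)}$) on $M$; and since these isomorphisms are natural in $M$, a morphism commuting with $\mu^{I_0}$ is the same as a morphism commuting with the Lie action. The equivalence is thus the identity on underlying quasi-coherent sheaves, and it remains only to match the two sets of axioms.

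Next I would verify that the bracket on $\mathcal{L}^{(I_0)}_0$ (resp. $\mathcal{L}^{(I_0)}$), which is by construction induced from the Lie-* bracket $\mu_L$ through $h_\Gamma$, is compatible with this adjunction. Under the isomorphism the composite $\mu_{\{1,2\},3}$, which factors as $\mu^{I_0}\circ\mu_L$, corresponds to the action precomposed with the bracket $\mathcal{L}^{(I_0)}_0\otimes\mathcal{L}^{(I_0)}_0\to\mathcal{L}^{(I_0)}_0$, i.e. to $[\xi,\eta]\cdot m$. For the remaining two composites I would use the transitivity $\Gamma_{(\tilde{[1]}\subset\tilde{[2]})}\circ\Gamma_{(I_0\subset\tilde{[1]})}=\Gamma_{(I_0\subset\tilde{[2]})}$ together with the functoriality of $h_\Gamma$ to identify $\mu_{1,\{2,3\}}$ with the iterated action $\xi\cdot(\eta\cdot m)$ and $\mu_{2,\{1,3\}}$ with $\eta\cdot(\xi\cdot m)$ (the swap coming from $\sigma_{12}^!$). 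With these identifications the defining relation $\mu_{\{1,2\},3}=\mu_{2,\{1,3\}}-\mu_{1,\{2,3\}}$ becomes precisely the Lie module identity $[\xi,\eta]\cdot m=\eta\cdot(\xi\cdot m)-\xi\cdot(\eta\cdot m)$ for the bracket on $\mathcal{L}^{(I_0)}_0$, so the Lie-* and Lie-module conditions agree; the chiral case is identical once $j_\*j^!$ is carried through.

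The main obstacle is this second step: translating the iterated $\Gamma$-construction over $X^{\tilde{[2]}}$ into a genuine composition of Lie-algebra actions while respecting the completions that define $h_\Gamma$. Concretely, one must check that applying the preceding Lemma first to the module $\Gamma_{(I_0\subset\tilde{[1]})}(M)$ over $X^{\tilde{[1]}}$ and then again over $X^{I_0}$ reproduces the action of $\mathcal{L}^{(I_0)}_0\otimes\mathcal{L}^{(I_0)}_0$ and not some uncompleted variant; this is where the projective system $\Xi$ of submodules and the continuity of the action (with $M$ discrete) must be tracked with care. In the chiral setting the only additional point is to split the various diagonal contributions appearing in $\Gamma_{(I_0\subset\tilde{[2]})}(M)$, which is handled by Lemma \ref{gfact} exactly as in the proof of Theorem \ref{equiv}.
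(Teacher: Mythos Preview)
Your proposal is correct and follows essentially the same route as the paper: apply the preceding $\mathrm{Hom}$-isomorphism with $N=N'=M$ and $M$ equal to $p_1^!(L)[-I_0]$ (resp.\ its $j_\*j^!$) to convert the action datum into a continuous $\mathcal{L}^{(I_0)}_0$- (resp.\ $\mathcal{L}^{(I_0)}$-) action, and then note that the module axioms correspond. The paper's proof simply asserts this last compatibility in one line (``the condition of being a module is preserved under this correspondence''), whereas you spell out the identification of the three composites with $[\xi,\eta]\cdot m$, $\xi\cdot(\eta\cdot m)$, $\eta\cdot(\xi\cdot m)$; your ``main obstacle'' is exactly the bookkeeping hidden in that assertion, and your sketch of how to handle it (via transitivity of $\Gamma$ and naturality of $h_\Gamma$) is the right one.
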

\begin{proof}
Let $L'$ be one of $p_1^!(L)[-I_0]$ and $j_\*j^!p_1^!(L)[-I_0]$, and $\mathcal{L} = h_\Gamma(L')$.  For a quasi-coherent sheaf $M$ on $X^{I_0}$, we have natural isomorphisms
\[ Hom(L'\otimes p_2^!(M)[-1], \Gamma_{(I_0\subset \tilde{[1]})}(M)) \simeq Hom(\mathcal{L}\otimes M, M) .\]
It follows that giving a map $L\boxtimes M\rightarrow \Gamma_{(I_0\subset \tilde{[1]})}(M)$ (resp. $j_\*j^!(L\boxtimes M)\rightarrow \Gamma_{(I_0\subset \tilde{[1]})}(M)$) is equivalent to giving a continuous map $\mathcal{L}^{(I_0)}_0 \otimes M \rightarrow M$ (resp. $\mathcal{L}^{(I_0)} \otimes M\rightarrow M$).  Furthermore, the condition of being a module is preserved under this correspondence.
\end{proof}

\subsection{Chiral Envelope}

We will establish the equivalence of chiral $L$ modules and chiral
modules over its chiral envelope $\mathcal{A}(L)$.  The statement is local on $X$, so for the rest of this section assume that $X$ is affine.\\
\\
Recall that the chiral envelope is left adjoint to the functor which
takes a chiral algebra to its underlying Lie-* algebra, i.e. for a
Lie-* algebra $L$ and a chiral algebra $\mathcal{A}$, we have
\[ Hom_{Lie-*}(L,\mathcal{A}) = Hom_{Chiral}(\mathcal{A}(L),\mathcal{A}). \]
Explicitly, the chiral envelope is constructed as a factorization
algebra in \cite{CHA} as follows.
For each $I$, consider
\begin{equation}\label{corr} \xymatrix{H_{(I\subset I\sqcup\{\star\})} \ar[r]^-{i} & X \times X^{I}\ar[dl]_{p_1} \ar[dr]^{p_2} & \ar[l]_-{j} U_I \\
X & & X^{I} }
\end{equation}
where $U_I$ is the complement of $H_{(I\subset I\sqcup\{\star\})}$.  Now let 
\[ \tilde{L}^{(I)} :=
p_{1\*}j_\*j^!p_2^!(L)[-I] \mbox{ and } \tilde{L}^{(I)}_0 :=
p_{1\*}p_2^!(L)[-I] .\]
These are Lie algebras in $D$-modules on $X^I$ and in particular Lie algebras in quasi-coherent sheaves. We
then have $\mathcal{A}(L)^{(I)} = U(\tilde{L}^{(I)})/\tilde{L}^{(I)}_0
U(\tilde{L}^{(I)})$, where $U(-)$ is the universal enveloping algebra of a Lie algebra.\\
\\
As in the case of chiral modules on $X$, we have the following equivalence.

\begin{thm}
Let $L$ be a Lie-* algebra on $X$, $I_0$ a finite set.  As categories over sheaves on $X^{I_0}$ the categories of chiral $L$ modules and chiral $\mathcal{A}(L)$ modules are equivalent.
\end{thm}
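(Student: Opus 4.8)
The plan is to prove the equivalence by passing to the topological Lie and associative algebras constructed above and then invoking the universal property of the enveloping algebra. The canonical Lie-$*$ map $L\to \mathcal{A}(L)$ (the unit of the adjunction defining the chiral envelope) induces, for any chiral $\mathcal{A}(L)$-module $M$ on $X^{I_0}$, a restriction of the structure map
\[ j_\*j^!(L\boxtimes M)\to j_\*j^!(\mathcal{A}(L)\boxtimes M)\overset{\mu^{I_0}}{\longrightarrow}\Gamma_{(I_0\subset\tilde{[1]})}(M), \]
and since $L\to\mathcal{A}(L)$ respects brackets this restricted map satisfies the Lie action axiom for a chiral $L$-module. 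This defines a restriction functor from chiral $\mathcal{A}(L)$-modules to chiral $L$-modules which is manifestly the identity on the underlying sheaf $M$, hence a functor over quasi-coherent sheaves on $X^{I_0}$. It remains to show it is an equivalence.

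First I would establish the chiral-algebra analogue of Lemma \ref{top}. For a chiral algebra $\mathcal{A}$ set $\mathcal{A}^{as,(I_0)}:=h_\Gamma(j_\*j^!(p_1^!(\mathcal{A})[-I_0]))$; exactly as the Lie-$*$ bracket makes $\mathcal{L}^{(I_0)}$ a topological Lie algebra, the chiral product makes $\mathcal{A}^{as,(I_0)}$ a sheaf of topological associative algebras on $X^{I_0}$. Repeating the adjunction computation of Lemma \ref{top} verbatim, now with the associativity (rather than Lie) condition, gives an equivalence between chiral $\mathcal{A}$-modules on $X^{I_0}$ and continuous $\mathcal{A}^{as,(I_0)}$-modules. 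Applying this to $\mathcal{A}=\mathcal{A}(L)$ and combining with Lemma \ref{top}, the theorem reduces to the purely algebraic statement that restriction along the map of topological algebras $\mathcal{L}^{(I_0)}\to \mathcal{A}(L)^{as,(I_0)}$ induces an equivalence between continuous $\mathcal{A}(L)^{as,(I_0)}$-modules and continuous $\mathcal{L}^{(I_0)}$-modules. By the universal property of the completed enveloping algebra this holds once I identify $\mathcal{A}(L)^{as,(I_0)}$ with $U(\mathcal{L}^{(I_0)})$ compatibly with the map from $\mathcal{L}^{(I_0)}$, since a continuous Lie action on a discrete module extends uniquely to a continuous action of the completed enveloping algebra.

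The heart of the proof, and the step I expect to be the main obstacle, is therefore the identification $\mathcal{A}(L)^{as,(I_0)}\simeq U(\mathcal{L}^{(I_0)})$ as topological associative algebras under $\mathcal{L}^{(I_0)}$. The map $L\to\mathcal{A}(L)$ gives a map of topological Lie algebras $\mathcal{L}^{(I_0)}\to\mathcal{A}(L)^{as,(I_0)}$ and hence, by universality, a map $U(\mathcal{L}^{(I_0)})\to\mathcal{A}(L)^{as,(I_0)}$. To see it is an isomorphism I would use the explicit presentation $\mathcal{A}(L)^{(I)}=U(\tilde{L}^{(I)})/\tilde{L}^{(I)}_0 U(\tilde{L}^{(I)})$: applying $h_\Gamma$ completes $\tilde{L}^{(I)}$ to $\mathcal{L}^{(I_0)}$ and $\tilde{L}^{(I)}_0$ to $\mathcal{L}^{(I_0)}_0$, and the fiber computations $\mathcal{L}^{(I_0)}_{(x_i)}=H_{dR}(\bigcup_{i\in I_0} D^\times_{x_i},L)$, $\mathcal{L}^{(I_0)}_{0\ (x_i)}=H_{dR}(\bigcup_{i\in I_0} D_{x_i},L)$ exhibit the fiber of $\mathcal{A}(L)^{as,(I_0)}$ as the completed enveloping algebra of sections of $L$ on the punctured disks. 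The delicate points are to check that $h_\Gamma$ is compatible with the enveloping-algebra and quotient constructions, and to run a Poincar\'e--Birkhoff--Witt argument, filtering $U(\mathcal{L}^{(I_0)})$ by order, to upgrade the evident surjectivity to an isomorphism after completion; one must verify that the associated graded of the map is the symmetric-algebra isomorphism and that the topologies on the two sides agree. Granting this identification, the steps above combine to give the desired equivalence, which by construction is the restriction functor and hence an equivalence of categories over quasi-coherent sheaves on $X^{I_0}$.
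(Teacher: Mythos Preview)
Your approach is reasonable in outline but takes a genuinely different route from the paper's. The paper does not introduce an associative topological algebra $\mathcal{A}(L)^{as,(I_0)}$ at all. After using Lemma~\ref{top} to view a chiral $L$-module $M$ as a continuous $\mathcal{L}^{(I_0)}$-module, and hence as a $\tilde{L}^{(I_0)}$-module (since $\mathcal{L}^{(I_0)}$ is the completion of $\tilde{L}^{(I_0)}$), the paper \emph{directly constructs} the inverse functor on the factorization side: it sets
\[
M^{\tilde I}=U(\tilde{L}^{(\tilde I)})\otimes_{q_2^!(\tilde{L}^{(I_0)})[-I]}q_2^!(M)[-I]
\]
and deduces factorization of the $\{M^{\tilde I}\}$ from the short exact sequence~(\ref{topfact}) expressing how $\tilde{L}^{(\tilde I)}$ splits along the factorization pattern. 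This uses the explicit presentation $\mathcal{A}(L)^{(I)}=U(\tilde L^{(I)})/\tilde L_0^{(I)}U(\tilde L^{(I)})$ together with the already-proved equivalence of chiral and factorization modules (Theorem~\ref{equiv}), and bypasses both of your hard steps.

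Your route is more conceptually uniform, but the step you flag as the main obstacle is genuine and is not quite as you describe it. First, the associative analogue of Lemma~\ref{top} is \emph{not} obtained by ``repeating the adjunction computation verbatim with the associativity condition'': in this paper a chiral $\mathcal{A}$-module is defined by a Lie-action identity together with a unit axiom, not by an associativity axiom, so you must show that under $h_\Gamma$ these two together translate into a \emph{unital} associative module structure for $\mathcal{A}^{as,(I_0)}$ (the relative version of \cite[\S3.6]{CHA}); the unit is essential here and does extra work. Second, the identification $\mathcal{A}(L)^{as,(I_0)}\simeq U(\mathcal{L}^{(I_0)})$ requires that $h_\Gamma$ commute with forming $U(-)$ and with the quotient by $\tilde L_0^{(I_0)}U(-)$ after completion; this is a PBW statement for the completed topological Lie algebra with its specific topology, and it is exactly the content that the paper's direct construction of $M^{\tilde I}$ replaces. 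If you carry both points through, your argument works and has the virtue of exhibiting the equivalence transparently as restriction along $L\to\mathcal{A}(L)$; the paper's approach instead produces the inverse functor by hand via the factorization description, trading conceptual uniformity for avoiding the completed-PBW issue.
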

\begin{proof}
If $M$ is a chiral $\mathcal{A}(L)$ module, the canonical map $L\rightarrow \mathcal{A}(L)$ makes it into a chiral $L$ module.  Now, suppose that $M$ is a chiral $L$ module on $X^{I_0}$.  By lemma \ref{top}, $M$ is a Lie $\mathcal{L}^{(I_0)}$ module.  In particular, since $\mathcal{L}^{(I_0)}$ is a completion of $\tilde{L}^{(I_0)}$, $M$ is a $\tilde{L}^{(I_0)}$ module.

For each $\tilde{I} = I \sqcup I_0$, consider
\[ X^I \overset{q_1}{\longleftarrow} X^{I}\times X^{I_0}\overset{q_2}{\longrightarrow} X^{I_0} \]
with $q_i$ the projection maps, and let
\[ v: U\hookrightarrow X^{I}\times X^{I_0} \]
be the open set given by $U = \{x_i\neq y_j \ |\  (x_i),(y_j)\in X^{I}\times X^{I_0}\}$. We then have an exact sequence
\begin{equation}\label{topfact}
 0\rightarrow q_2^!(\tilde{L}^{(I)})[-I_0]\rightarrow \tilde{L}^{(\tilde{I})}\rightarrow v_\*v^!q_1^!(L_{(I)})[-I_0] \rightarrow 0
 \end{equation}
where
\[ L_{(I)} := p_{2\*}i_\*i^!p_1^!(L)[1-I] .\]
Let
\[ M^{\tilde{I}} = U(\tilde{L}^{\tilde{I}})\otimes_{q_2^!(\tilde{L}^{(I_0)})[-I]} q_2^{!}(M)[-I]. \]
The $\{M^{\tilde{I}}\}$ factorize because the $\tilde{L}^{(\tilde{I})}$ factorize appropriately by (\ref{topfact}).
\end{proof}

From the theorem, we get an induction functor from Lie-* modules over
$L$ to chiral $L$ modules as follows.  Let $M$ be a Lie-* $L$ module.
It then follows that $M$ is a continuous $\tilde{L}^{(I_0)}_0$ module.
Using induction, we obtain that
\[ \tilde{M} = U(\tilde{L}^{(I_0)})\otimes_{\tilde{L}^{(I_0)}_0} M \]
is a chiral $L$ module.  This is the left adjoint to the forgetful functor from chiral to Lie-* modules.

\subsection{Linear Factorization Sheaves}\label{linfact}
Consider the following diagram
\[ \xymatrix{ H_I \ar[r]^-{i} & \ar[ld]_{p_1} X\times X^{I} \ar[rd]^{p_2} & \\
X & & X^I } \]
where $H_I = H_{(I\subset I\sqcup \{\star\})}$ is the incidence correspondence $\{x=x_i\}$.
For a $D_X$-module $M$, define
\[ M_{(I)}:= p_{2\*}i_\*i^!p_1^!(M)[1-I] .\]
We have that the fiber of $M_{(I)}$ at $(x_i)$ is given by
\[ M_{(I)\ (x_i)} = \underset{x\in \{x_i, i\in I\}}{\bigoplus} M_x \]
where we take the sum over all distinct elements of $\{x_i, i\in I\}$.  Thus the $M_{(I)}$ have a kind of additive factorization property, which we can formalize as follows.

\begin{defn}
A linear factorization sheaf is a $D$-module $\mathcal{F}$ on $Ran(X)$ with compatible factorization isomorphisms
\[ j^{[J/I]!}(\boxplus_{I} \mathcal{F}^{J_i}) \simeq j^{[J/I]!} \mathcal{F}^{J}  \]
for each $J\surj I$.
It is unital if in addition there are given compatible maps
\[ pr_1^!(\mathcal{F}^{(I_1)})[I_1-I] \rightarrow \mathcal{F}^{(I)} \]
for partitions $I=I_1\sqcup I_2$ where $pr_1:X^I\rightarrow X^{I_1}$ is the projection map.
\end{defn}

For a $D_X$-module $M$, we have that the $\{M_{(I)}\}$ form a unital linear factorization sheaf.  The factorization isomorphisms are clear and the unit maps come from the maps of schemes $H_{I_1}\times X^{I_2} \rightarrow H_I$ for a partition $I=I_1\sqcup I_2$.  In fact, these are the only examples of unital linear factorization sheaves.

\begin{thm}
There is an equivalence of categories
\[ \{\mbox{Unital Linear Factorization Sheaves}\}\longrightarrow \{D_X\mbox{-modules}\} \]
given by
\[ \{\mathcal{F}_I\} \mapsto \mathcal{F}_{[1]}
\mbox{ with the inverse }
 M \mapsto \{M_{(I)}\} .\]
\end{thm}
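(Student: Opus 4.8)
The plan is to exhibit the two functors as mutually quasi-inverse equivalences. One composite is immediate: starting from a $D_X$-module $M$, forming $\{M_{(I)}\}$ and evaluating at $I=[1]$ recovers $M$, since $H_{[1]}=\{x=x_1\}$ is the graph of the identity, so $p_1\circ i = p_2\circ i = \mathrm{id}_X$ and hence $M_{([1])} = p_{2\*}i_\*i^!p_1^!(M)[1-1] = (p_2 i)_\*(p_1 i)^!(M) = M$. All the content is thus in the reverse composite: given a unital linear factorization sheaf $\mathcal{F}$, I must produce a natural isomorphism $\theta\colon \{M_{(I)}\}\isomarrow \mathcal{F}$ of unital linear factorization sheaves, where $M:=\mathcal{F}_{[1]}$. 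That $\{M_{(I)}\}$ is again a unital linear factorization sheaf was recorded just before the statement, so the functors are well defined.

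To build $\theta_I\colon M_{(I)}\to \mathcal{F}^{(I)}$ I would use the unit data together with the geometry of $H_I$. Writing $H_I=\bigcup_{i\in I}H_i$ with $H_i=\{x=x_i\}\cong X^I$, the Mayer--Vietoris (support) filtration presents $i_\*i^!p_1^!(M)$ through the contributions of the components and their intersections along deeper diagonals; pushing forward by $p_2$ and shifting, the component contributions become $\rho_i^!(M)[1-I]$, where $\rho_i\colon X^I\to X$ is the $i$-th projection. On the other side, the unit maps of $\mathcal{F}$ for the singleton partitions $I=\{i\}\sqcup(I\smallsetminus\{i\})$ furnish morphisms $u_i\colon \rho_i^!(M)[1-I]\to \mathcal{F}^{(I)}$. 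The compatibility required of the unit maps is exactly what is needed for the $u_i$ to be compatible with the Mayer--Vietoris differentials, so that they assemble into a single map $\theta_I$ out of the above presentation of $M_{(I)}$. Naturality in $M$ and compatibility of $\theta$ with the factorization isomorphisms and units should then be formal, using the additive factorization of both sides on the loci where clusters separate (the linear analogue of Lemma \ref{gfact}).

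I would prove $\theta_I$ is an isomorphism by induction on $|I|$, the case $|I|=1$ being the identity of $M$. For the inductive step I stratify $X^I$ into the closed main diagonal $\delta\colon X\hookrightarrow X^I$ and its open complement $\jmath\colon V\hookrightarrow X^I$, and compare the recollement triangles $\delta_\*\delta^!(-)\to(-)\to\jmath_\*\jmath^!(-)$ for $M_{(I)}$ and for $\mathcal{F}^{(I)}$. Every point of $V$ has at least two distinct coordinates, so $V$ is covered by the open sets attached to the nontrivial partitions of $I$; on each of these the additive factorization isomorphisms identify $\jmath^!\mathcal{F}^{(I)}$ and $\jmath^!M_{(I)}$ with $\boxplus$'s of sheaves on strictly smaller powers of $X$, which agree by the inductive hypothesis, and $\theta_I$ restricts on $V$ to this identification. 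On the closed stratum, because $\mathcal{F}$ is a $D$-module on $Ran(X)$ the diagonal restriction gives $\delta^!\mathcal{F}^{(I)}\simeq M[1-I]$, matching $\delta^!M_{(I)}\simeq M[1-I]$ visible from the fiber formula $M_{(I),(x_i)}=\bigoplus_{x\in\{x_i\}}M_x$; there $\theta_I$ is induced by the unit and is the identity. Matching open and closed parts then forces $\theta_I$ to be an isomorphism.

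The main obstacle is the last point: identifying the gluing data of the two recollements, i.e.\ showing that the connecting map describing how $\mathcal{F}^{(I)}$ extends $\jmath_\*\jmath^!\mathcal{F}^{(I)}$ across the main diagonal coincides with that of $M_{(I)}$. Concretely one must rule out any local sections of $\mathcal{F}^{(I)}$ supported on the main diagonal beyond the single copy of $M$ dictated by the $Ran$-structure, and check that the unit map attaches this copy to the off-diagonal part in precisely the way the ``collision-identification'' does for $M_{(I)}$. This is where unitality is indispensable; it is the additive counterpart of the Cousin-complex bookkeeping carried out in the proof of Theorem \ref{equiv}, and I expect it to follow by applying $\delta^!$ and $\jmath^!$ to the comparison map and invoking the compatibility axioms, but it is the step demanding genuine care.
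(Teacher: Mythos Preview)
Your overall strategy is sound and matches the paper's, but the paper's execution is considerably more direct, and your final ``main obstacle'' is in fact no obstacle at all.

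\textbf{Constructing the comparison map.} You assemble $\theta_I$ out of the individual unit maps $u_i:\rho_i^!(M)[1-I]\to\mathcal{F}^{(I)}$ via a Mayer--Vietoris presentation of $M_{(I)}$ along the components $H_i$ of $H_I$. This works, but it obliges you to verify that the $u_i$ agree on all the multiple intersections. The paper avoids this entirely by going one level up. Take the \emph{single} unit map for the partition $I\sqcup\{\star\}=\{\star\}\sqcup I$,
\[
p_1^!(\mathcal{F}_{[1]})[-I]\longrightarrow \mathcal{F}^{(I\sqcup\{\star\})},
\]
apply $i^!$ to land on $H_I$, and use the compatibility of the unit with the $Ran$-structure to identify $i^!\mathcal{F}^{(I\sqcup\{\star\})}[1]\simeq \pi_2^!\mathcal{F}^{(I)}$. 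This yields $\pi_1^!(\mathcal{F}_{[1]})[1-I]\to\pi_2^!(\mathcal{F}^{(I)})$, and since $\pi_2:H_I\to X^I$ is proper, the $(\pi_{2\*},\pi_2^!)$ adjunction gives
\[
\theta_I:\ M_{(I)}=\pi_{2\*}\pi_1^!(\mathcal{F}_{[1]})[1-I]\longrightarrow \mathcal{F}^{(I)}
\]
in one stroke. No componentwise assembly, no Mayer--Vietoris bookkeeping.

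\textbf{Checking that $\theta_I$ is an isomorphism.} Your worry about ``identifying the gluing data of the two recollements'' is misplaced. Once the global morphism $\theta_I$ exists, you do not need to compare connecting maps: apply the exact triangle $\delta_\*\delta^!\to\mathrm{id}\to\jmath_\*\jmath^!$ to both source and target, note that $\theta_I$ intertwines the two triangles, and conclude by the five lemma from the facts you already established (iso on $V$ by factorization plus induction, iso on the main diagonal by the $Ran$-structure). This is exactly what the paper compresses into the sentence ``by the factorization property, it suffices to construct compatible maps'': a morphism of unital linear factorization sheaves which is an isomorphism on $X^{[1]}$ is automatically an isomorphism on every $X^I$.

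So your proof goes through, but you can replace your Mayer--Vietoris construction by the one-line adjunction argument, and you can delete the paragraph about gluing data.
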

\begin{proof}
We need to show that for a unital linear factorization sheaf $\mathcal{F}$, we have compatible natural isomorphisms
\[ \mathcal{F}_I \simeq \pi_{2\*} \pi_1^{!}(\mathcal{F}_1)[1-I] \]
where $\pi_\alpha = p_\alpha \circ i$ for $\alpha=1,2$ are the projection maps from $H_I$.
By the factorization property, it suffices to construct compatible maps
\[ \pi_{2\*} \pi_1^!(\mathcal{F}_1)[1-I] \rightarrow \mathcal{F}_I \]
The unit provides a map
\[ p_1^!(\mathcal{F}_{[1]})[-I] \rightarrow \mathcal{F}_{(I \sqcup \{\star\})} .\]
Furthermore, by compatibility of the unit map with factorization, we have $i^!(\mathcal{F}_{(I \sqcup \{\star\})})[1]\simeq \pi_2^!(\mathcal{F}_{I})$.
Thus we have compatible maps
\[ \pi_1^!(\mathcal{F}_{[1]})[1-I] \rightarrow \pi_2^!(\mathcal{F}_I) \]
which gives the desired maps by adjunction.
\end{proof}

\subsection{Coaction Map}
Let $L$ be a Lie-* algebra.  We can describe Lie-* $L$ modules on
$X^{I_0}$ as comodules over a certain sheaf of Lie coalgebras.  Since we
are working in the underived setting, we will assume in this section
that $L$ is a vector $D_X$ bundle (in the derived setting it is enough
to assume that $L$ is $D_X$ coherent).\\
\\
In what follows, we will need the following version of duality.
\begin{lemma}\label{duality}
Let $Y_1$, $Y_2$ be smooth schemes and let $M$ be a $D_{Y_1}$-module which is induced from a vector bundle.  Let $N$ be a $\mathcal{O}_{Y_2}$-module and $P$ a $D_{Y_1}\boxtimes \mathcal{O}_{Y_2}$-module.  We then have a canonical isomorphism
\[
Hom(M\boxtimes N, P) \simeq Hom(N, p_{2\*}(p_1^{!}(M^{\vee})[-dim(Y_2)]\otimes P))
\]
where $p_i: Y_1\times Y_2\rightarrow Y_i$ are the projection maps and $M^{\vee}$ is the dual of $M$.
\end{lemma}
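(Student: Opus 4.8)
The plan is to reduce to the universal case and then run a chain of adjunctions, the only nonformal input being the duality for induced $D$-modules. Since $M$ is induced from a vector bundle, write $M = V\otimes_{\mathcal{O}_{Y_1}} D_{Y_1}$ for a vector bundle $V$ on $Y_1$. The external product is then induced as well: as a $(D_{Y_1}\boxtimes\mathcal{O}_{Y_2})$-module,
\[ M\boxtimes N \simeq (V\boxtimes N)\otimes_{\mathcal{O}_{Y_1\times Y_2}}(D_{Y_1}\boxtimes \mathcal{O}_{Y_2}), \]
i.e. it is induced from the $\mathcal{O}_{Y_1\times Y_2}$-module $V\boxtimes N = p_1^*V\otimes_{\mathcal{O}} p_2^*N$.

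First I would apply the induction/forgetful adjunction in the category of $(D_{Y_1}\boxtimes\mathcal{O}_{Y_2})$-modules, giving a natural isomorphism
\[ Hom(M\boxtimes N, P) \simeq Hom_{\mathcal{O}_{Y_1\times Y_2}}(V\boxtimes N,\, P), \]
the right-hand side being an $\mathcal{O}$-module Hom (the $D_{Y_1}$-structure on $P$ is forgotten). Using that $V$ is locally free, tensor-hom adjunction rewrites this as $Hom_{\mathcal{O}}(p_2^*N,\, p_1^*V^\vee\otimes P)$, and the $(p_2^*,p_{2\*})$ adjunction then yields
\[ Hom(M\boxtimes N, P)\simeq Hom_{\mathcal{O}_{Y_2}}(N,\, p_{2\*}(p_1^*V^\vee\otimes P)). \]

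It remains to identify the $\mathcal{O}_{Y_1\times Y_2}$-module $p_1^*V^\vee\otimes P$ with $p_1^!(M^\vee)[-dim(Y_2)]\otimes P$. The dual of an induced $D$-module is again induced, $M^\vee \simeq V^\vee\otimes_{\mathcal{O}_{Y_1}}D_{Y_1}$; pulling back along the smooth projection $p_1$ of relative dimension $\dim Y_2$ keeps the module induced, and $p_1^!$ carries a shift by $[\dim Y_2]$ from the relative dimension, which is exactly cancelled by the $[-dim(Y_2)]$ in the statement. Thus $p_1^!(M^\vee)[-dim(Y_2)]$ is the vertical induced module $(p_1^*V^\vee)\otimes(D_{Y_1}\boxtimes\mathcal{O}_{Y_2})$, and tensoring an induced module against $P$ in the (right-$D$-module) tensor product collapses the $D$-factor, leaving the underlying $\mathcal{O}$-module $p_1^*V^\vee\otimes_{\mathcal{O}} P$. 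Matching these identifications gives the claimed isomorphism.

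The formal adjunctions and the tensor-hom step are routine; the step where I expect the bookkeeping to be delicate is this last identification, namely verifying that the dual $M^\vee$ of the induced module is induced from $V^\vee$ and that the shift carried by $p_1^!$, the compensating $[-dim(Y_2)]$, and the conventions for the tensor product of right $D$-modules all conspire to produce exactly $p_1^*V^\vee\otimes P$ with no residual shift. Each isomorphism above is manifestly natural in $M$, $N$ and $P$, being assembled from adjunction units and counits and the projection formula, so the resulting identification is canonical as asserted.
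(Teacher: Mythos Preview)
Your argument is correct and runs parallel to the paper's, differing mainly in where you choose to work. The paper stays in the $D_{Y_1}\boxtimes\mathcal{O}_{Y_2}$-module category throughout: it first invokes duality for the dualizable object $M$ in one stroke to get
\[
Hom(M\boxtimes N,P)\ \simeq\ Hom\bigl(p_2^{*}N,\ p_1^{!}(M^{\vee})[-\dim Y_2]\otimes P\bigr),
\]
and then applies the adjunction for $p_2$. You instead use the induced presentation $M=V\otimes D$ to drop to $\mathcal{O}$-modules at the very first step, carry out tensor--hom and the $(p_2^{*},p_{2*})$ adjunction there, and defer the translation back into $D$-module language to your step~5. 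The paper's single word ``duality'' is exactly your induction/forgetful adjunction together with tensor--hom for $V$, so the underlying content is the same; your route makes the role of the hypothesis on $M$ explicit, while the paper's is more compressed. Your caution about step~5 is well placed---that is precisely the point where the two descriptions must be reconciled (matching $p_1^{*}V^{\vee}\otimes_{\mathcal{O}}P$ with $p_1^{!}(M^{\vee})[-\dim Y_2]\otimes P$ and the $\mathcal{O}$-pushforward with $p_{2\*}$)---and once the right-module conventions are pinned down it goes through.
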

\begin{proof}
By duality, we have a natural isomorphism
\[ Hom(M\boxtimes N, P)\simeq Hom(p_2^{\*}(N), p_1^{!}(M^{\vee})[-dim(Y_2)]\otimes P).\]
By adjunction, we further have
\[ Hom(p_2^{!}(N)[-dim(Y_1)], p_1^{!}(M^{\vee})[-dim(Y_2)]\otimes P) \simeq Hom(N, p_{2\*}(p_1^{!}(M^{\vee})[-dim(Y_2)]\otimes P)). \]
\end{proof}

Let $L^\vee$ be the (D-module) dual of $L$.  Then, $L^\vee$ is a Lie
coalgebra in the category of D-modules on $X$.  Let $L^{\vee (I_0)}$ be as in section \ref{linfact}. It is a Lie coalgebra in
$D\mbox{-}mod(X^{I_0})$ and hence a Lie coalgebra in quasi-coherent sheaves on $X^{I_0}$.  We then have the following equivalence.

\begin{propn}
As a category over quasi-coherent sheaves on $X^{I_0}$, the category of $L^{\vee (I_0)}$ comodules is equivalent to the category
of Lie-* $L$ modules on $X^{I_0}$.
\end{propn}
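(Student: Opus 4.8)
The plan is to establish the equivalence by passing through the dual formulation of the action map, exactly as the definition of a Lie-* module is set up. Recall that a Lie-* $L$ module on $X^{I_0}$ consists of a quasi-coherent sheaf $M$ together with a map $\mu^{I_0}: L\boxtimes M\rightarrow \Gamma_{(I_0\subset\tilde{[1]})}(M)$ satisfying the Lie action identity. The first step is to dualize this action map. Since $L$ is a vector $D_X$-bundle, I can apply Lemma \ref{duality} (with $Y_1 = X$, $Y_2 = X^{I_0}$, $M$ the bundle $L$, and $P = \Gamma_{(I_0\subset\tilde{[1]})}(M)$, tensored with $M$ on the source) to convert the datum of $\mu^{I_0}$ into a coaction map $M\rightarrow L^{\vee(I_0)}\otimes M$ valued in the sheaf $L^{\vee(I_0)} = (L^\vee)_{(I_0)}$ constructed in section \ref{linfact}. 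The key computation here is identifying the target: the pushforward $p_{2*}$ of $p_1^!(L^\vee)[-1]\otimes \Gamma_{(I_0\subset\tilde{[1]})}(M)$ (or the appropriate twist) should yield $L^{\vee(I_0)}\otimes M$, which is precisely the statement that $L^{\vee(I_0)} = p_{2*}i_*i^!p_1^!(L^\vee)[1-I_0]$ governs the dualized operations along the incidence correspondence $H_{(I_0\subset\tilde{[1]})}$.

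Next I would match the structure. The Lie coalgebra structure on $L^{\vee(I_0)}$ is induced from the Lie-* cobracket on $L^\vee$, which is in turn dual to the Lie-* bracket on $L$. So the second step is to check that the comodule axiom for the coaction map $M\rightarrow L^{\vee(I_0)}\otimes M$ is the exact dual of the Lie action identity $\mu_{\{1,2\},3} = \mu_{2,\{1,3\}} - \mu_{1,\{2,3\}}$. The three composites $\mu_{1,\{2,3\}}$, $\mu_{2,\{1,3\}}$, and $\mu_{\{1,2\},3}$ correspond under the duality of Lemma \ref{duality} (applied now on $X^2\times X^{I_0}$, via $\Gamma_{(I_0\subset\tilde{[2]})}(M)$) to the two ways of iterating the coaction together with the term arising from the cobracket $L^\vee\rightarrow \Delta_*(L^\vee\boxtimes L^\vee)$ dualizing $\Delta_*(L)\rightarrow$ bracket. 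The coassociativity/co-Jacobi condition defining a Lie comodule is then literally the transpose of equation $\mu_{\{1,2\},3} = \mu_{2,\{1,3\}} - \mu_{1,\{2,3\}}$.

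The functor in both directions is thus given by this duality, and I would verify it is an equivalence of categories over quasi-coherent sheaves on $X^{I_0}$ by noting that Lemma \ref{duality} produces a natural \emph{isomorphism} on Hom-spaces, hence a bijection between action maps and coaction maps that is compatible with morphisms of the underlying sheaf $M$ and with the projection to $\mathcal{M}(X^{I_0})$. Functoriality in $M$ is immediate since the duality isomorphism is natural; compatibility of the axioms under duality gives that the bijection restricts to an equivalence between the full subcategories cut out by the respective conditions.

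The main obstacle I anticipate is the careful bookkeeping in step two: ensuring that the diagonal term $\mu_{\{1,2\},3}$, which involves the Lie-* bracket of $L$ pushed forward along $\Delta_{12}$, dualizes correctly to the cobracket term in the comodule condition, including getting the shifts $[-1]$, the signs in $\mu_{2,\{1,3\}} - \mu_{1,\{2,3\}}$, and the supports on the relevant diagonals of $X^2\times X^{I_0}$ to line up. In particular one must check that the duality of Lemma \ref{duality} intertwines the map $\Gamma_{(I_0\subset\tilde{[1]})}$-composition used to build $\mu_{1,\{2,3\}}$ with the corresponding iterate of the coaction, which requires knowing that $\Gamma$ and $p_{2*}$ interact compatibly along the transitive diagonal maps — this is where the transitivity of $\Gamma$ (the first Lemma) and the factorization Lemma \ref{gfact} are implicitly needed to identify the target $\Gamma_{(I_0\subset\tilde{[2]})}(M)$ with the doubly-coacted object.
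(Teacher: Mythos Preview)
Your proposal is correct and follows essentially the same route as the paper: both arguments use Lemma \ref{duality} together with the projection formula (using that $\Gamma_{(I_0\subset\tilde{[1]})}(M)$ is supported on the proper scheme $H$) to pass between an action map $L\boxtimes M\to\Gamma_{(I_0\subset\tilde{[1]})}(M)$ and a coaction map $M\to L^{\vee(I_0)}\otimes M$, and then observe that the Lie action identity and the comodule axiom are dual to one another. The only cosmetic differences are that the paper runs the construction from comodule to module while you start on the module side, and the paper compresses your detailed axiom-matching into a single parenthetical remark; your shift $[-1]$ should be $[-I_0]$, as you suspected.
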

\begin{proof}
Suppose $M$ is an $L^{\vee (I_0)}$ comodule, i.e.
we have a map
\[ coact: M \rightarrow L^{\vee (I_0)} \otimes M. \]
We have, by the projection formula,
\[ L^{\vee (I_0)}\otimes M \simeq p_{2\*}(p_1^!(L^{\vee})[-I_0]\otimes \Gamma_{(I_0\subset \tilde{[1]})}(M)) \]
since $\Gamma_{(I_0\subset \tilde{[1]})}(M)$ is supported on $H$ which is proper over $X^{I_0}$.  By Lemma \ref{duality}, it follows that we have a map
\[ act: L\boxtimes M\rightarrow \Gamma_{(I_0\subset \tilde{[1]})}(M) \]
and so we see that $M$ is a Lie-* $L$ module on $X^{I_0}$ (a similar argument shows that the Lie action identity holds).

The inverse functor is constructed in the same way (every step in the
construction above was invertible).
\end{proof}

\section{$D_X$ Schemes and Commutative Chiral Algebras}\label{spaces}
Let $Y$ be a $D_X$ scheme.  In the case that $Y$ is affine, it is equivalent to a commutative chiral algebra.  In the general case, we consider a factorization space that is defined by $Y$ and give a characterization of factorization spaces that arise in this way.  We also describe modules over a $D_X$ group scheme in terms of Lie-* modules.

\subsection{Multijets}
For our purposes, a space will mean a functor from commutative rings to sets (or even simplicial sets), which we regard as the functor of points.  By Kan extension, this gives a presheaf on the category of schemes.  In the case of a scheme, the corresponding functor is the usual functor of points.  In the case of a stack, we consider the functor of points as taking values in the $\infty$-category of simplicial sets (by identifying groupoids with 1-homotopy types).  A $D_X$-space is a crystal of spaces over $X$.\\
\\
Given a $D_X$-space $Y$, we can construct $D_{X^I}$-spaces $\mathcal{J}_I(Y)$, called multijets, over powers of $X$ as follows.  For a test scheme $S$ an $S$ point of $\mathcal{J}_I(Y)$ is given by a map $\phi: S\rightarrow X^I$ along with a horizontal section $\hat{X}_S\rightarrow Y$ where $\hat{X}_S$ is the completion of $X\times S$ along the subscheme given by the union of the graphs of $\phi$ (regarding it as $I$ maps $S\rightarrow X$).\\
\\
We will give another description of multijets in terms of deRham stacks.  Recall that for a space $Z$, the deRham stack $Z^{dR}$ is given by $Z^{dR}(R) = Z(R/N)$ for a ring $R$ where $N$ is the nilradical of $R$.  A quasi-coherent sheaf on $Z^{dR}$ is exactly the same data as a crystal on $Z$.  Thus, for a smooth variety $Z$, the category of $D$-modules on $Z$ is equivalent to the category of quasi-coherent sheaves on $Z^{dR}$.  More concretely, we have a canonical map
\[ \phi: Z\rightarrow Z^{dR} \] 
and the functor $\phi^*$ implements this equivalence.\\
\\
In the same way, if $Y$ is a $D_Z$-space, then there is a canonical space $Y'$ over $Z^{dR}$ such that
\[ Y\simeq Y'\times_{Z^{dR}} Z .\]

We have the following analogue of induction for $D_X$-spaces.
Given a map of spaces $f: S_1\rightarrow S_2$, we will denote by $f^*$ the functor
\[ f^*:= - \underset{S_2}{\times} S_1: \{\mbox{Spaces}/S_2\} \rightarrow \{\mbox{Spaces}/S_1\} .\]
This functor has a right adjoint $f_*$ called Weil restriction, which is defined as follows.  Let $Z \rightarrow S_1$ be an $S_1$-space.  Then for a ring $R$, we have that $f_*(Z)(R)$ is the set of pairs
\[ (s,\psi) \in \left(S_2(R), Z(Spec(R)\times_{S_2} S_2) \right). \]
\\
Now, consider the following diagram
\[ \xymatrix{ H_I^{dR} \ar[r]^-{i^{dR}} & \ar[ld]_{p_1^{dR}} (X\times X^I)^{dR} \ar[rd]^{p_2^{dR}} & \\
X^{dR} & & (X^I)^{dR} } \]
where $H_I \subset X\times X^I$ is the incidence divisor defined as before.  Let $\pi^{dR}_j = p^{dR}_j\circ i^{dR}$ for $j=1,2$.

\begin{propn}
Let $Y = Y'\times_{X^{dR}} X$ be a $D_X$-space.  Then
\[ \mathcal{J}_I(Y) = (\pi_2^{dR})_*(\pi_1^{dR})^*(Y')\times_{(X^I)^{dR}} X^I. \]
\end{propn}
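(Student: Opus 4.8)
The plan is to prove the equality by comparing functors of points on affine test schemes $S = \mathrm{Spec}(R)$, matching the data defining an $S$-point of $\mathcal{J}_I(Y)$ with that of an $S$-point of the right-hand side. Throughout, the structure map of $\hat{X}_S$ to $X$ is the first projection, and ``horizontal'' refers to flatness in this $X$-direction with $S$ playing the role of parameters; this \emph{relative} nature is what drives the proof.

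First I would identify the locus along which $\hat{X}_S$ is completed. By definition $\hat{X}_S$ is the completion of $X\times S$ along the union of the graphs $\Gamma_{\phi_i}$ of the components $\phi_i\colon S\to X$ of $\phi$. Since $H_I\subset X\times X^I$ is the incidence divisor $\{x=x_i\}$, pulling it back along $\mathrm{id}_X\times\phi\colon X\times S\to X\times X^I$ gives exactly $\bigcup_i\Gamma_{\phi_i}$; hence this completion locus is $Z_\phi := S\times_{X^I} H_I$, sitting inside $X\times S$ over $S$. Next I would unwind the right-hand side: by the definition of Weil restriction recalled above, a point of $(\pi_2^{dR})_*(\pi_1^{dR})^*(Y')$ lying over a map $\phi^{dR}\colon S\to (X^I)^{dR}$ is a section of $(\pi_1^{dR})^*(Y')$ over $T := S\times_{(X^I)^{dR}} H_I^{dR}$, i.e.\ (unwinding $(\pi_1^{dR})^*(Y') = Y'\times_{X^{dR}} H_I^{dR}$) a map $T\to Y'$ over $X^{dR}$ taken via $\pi_1^{dR}\colon H_I^{dR}\to X^{dR}$. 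After the base change $\times_{(X^I)^{dR}} X^I$, an $S$-point of the right-hand side is therefore a pair consisting of $\phi\colon S\to X^I$ together with such a map $T\to Y'$ over $X^{dR}$, where $S\to (X^I)^{dR}$ is the composite $\phi^{dR}$.

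It remains to match these with horizontal sections, which is the crux. Using the crystal interpretation of the $D_X$-space $Y = Y'\times_{X^{dR}} X$, a horizontal section over the formal scheme $\hat{X}_S$, relative to the parameter space $S$, is classified by maps out of the relative de Rham stack $(\hat{X}_S/S)^{dR}$ into $Y'$ over $X^{dR}$. I would then compute this relative de Rham stack using two facts: that $(-)^{dR}$ commutes with fiber products (it is computed by precomposition with $R\mapsto R/\mathrm{nil}(R)$, which preserves limits of sets), and that the relative de Rham stack is insensitive to the formal, purely $X$-directional thickenings along which we completed, so that $(\hat{X}_S/S)^{dR}\simeq (Z_\phi/S)^{dR}$. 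Applying the formula $(W/S)^{dR}\simeq W^{dR}\times_{S^{dR}} S$ to $W = Z_\phi = S\times_{X^I} H_I$ yields $(Z_\phi/S)^{dR}\simeq (S^{dR}\times_{(X^I)^{dR}} H_I^{dR})\times_{S^{dR}} S \simeq S\times_{(X^I)^{dR}} H_I^{dR} = T$. Thus horizontal sections over $\hat{X}_S$ are exactly maps $T\to Y'$ over $X^{dR}$, precisely the data produced above; assembling with $\phi$ gives the identification, and one checks the assignment is natural in $S$ and compatible with the projections to $X^I$.

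The main obstacle is this last identification: recognizing that ``horizontal'' is meant relative to $S$ (along $X$ only), so the classifying object is the relative de Rham stack $(\hat{X}_S/S)^{dR}$ rather than the absolute $(\hat{X}_S)^{dR}$ --- the two differ precisely by whether the parameters $S$ are de-Rham-ified, and only the relative one reproduces $T$ --- together with verifying that this relative de Rham stack is canonically $T = S\times_{(X^I)^{dR}} H_I^{dR}$. Once this is in place the rest is bookkeeping. As a sanity check, for $I=\{1\}$ one has $H_I\simeq X$ with $\pi_1^{dR}=\pi_2^{dR}=\mathrm{id}$, so $T\simeq S$ and both sides reduce to $Y$ itself, as they should.
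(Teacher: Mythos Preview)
Your argument is correct and follows the same overall strategy as the paper: unwind the Weil restriction to reduce to identifying $T := S\times_{(X^I)^{dR}} H_I^{dR}$ with the object that classifies horizontal sections over $\hat{X}_S$. The organizational difference is in how this identification is made. You compute the \emph{relative} de Rham stack $(\hat{X}_S/S)^{dR}$ directly for arbitrary $S$: first collapse the formal thickening to get $(\hat{X}_S/S)^{dR}\simeq (Z_\phi/S)^{dR}$, then use that $(-)^{dR}$ commutes with fiber products to obtain $T$. The paper instead proves the dual statement $T\times_{X^{dR}} X \simeq \hat{X}_S$, and does so by first treating the universal case $S=X^I$ --- where the identity $(X^I\times X)\times_{(X^I\times X)^{dR}} H_I^{dR}\simeq (X^I\times X)^{\wedge}_{H_I}$ is the standard ``formal completion via de Rham'' formula --- and then base-changing along $S\to X^I$. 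Your route makes the role of the relative (as opposed to absolute) de Rham stack explicit, which is exactly the point you flag as the main obstacle; the paper's route avoids introducing that object but pays for it by passing through the universal base. Both are short and neither requires more input than the other.
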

\begin{proof}
For a scheme $S\rightarrow X^I$, we have that
\[ Hom_{X^I}(S, (\pi_2^{dR})_*(\pi_1^{dR})^*(Y')\times_{(X^I)^{dR}} X^I) = 
Hom_{(X^I)^{dR}}(S, (\pi_2^{dR})_*(\pi_1^{dR})^*(Y')) = \]
\[= Hom_{H_I^{dR}}(S\times_{(X^I)^{dR}} H_I^{dR}, Y\times_{X^{dR}} H_I^{dR}) 
 = Hom_{X^{dR}}(S\times_{(X^I)^{dR}} H_I^{dR}, Y) .\]
To prove the proposition, we need to show that
\[ S\times_{(X^I)^{dR}} H_I^{dR}\times_{X^{dR}} X \simeq \hat{X}_S .\]
In the case that $S=X^I$, we have
\[ X^I\times_{(X^I)^{dR}} H_I^{dR}\times_{X^{dR}} X \simeq (X^I\times X)\times_{(X^I\times X)^{dR}} H_I^{dR} \simeq (X^I\times X)^{\vee}_{H_I} \]
where $(X^I\times X)^{\vee}_{H_I}$ is the formal completion of $X^I\times X$ along $H_I$, which is exactly $\hat{X}_{X^I}$.  Now, for general $S$, we have
\[ S\times_{(X^I)^{dR}} H_I^{dR}\times_{X^{dR}} X \simeq S\times_{X^I}(\hat{X}_{X^I}) \simeq \hat{X}_S. \]
\end{proof}

\begin{rem}
If $Y$ is a $D_X$ scheme then each $\mathcal{J}_I(Y)$ is representable as a $D_{X^I}$ scheme as can be seen, for instance, by the explicit construction of the $D_{X^I}$ scheme $\mathcal{J}_I(Y)$ described in \cite{CHA}.
\end{rem}

\subsection{Factorization Spaces}
\begin{defn}
A factorization space is a space $Z$ over $Ran(X)$ (i.e. a compatible family of spaces $Z^{(I)}$ over $X^I$) together with isomorphisms
\[ j^{[J/I]*}(\times_{I} Z^{(J_i)}) \simeq j^{[J/I]*} Z^{(J)}  \]
for each $J\surj I$
which are mutually compatible.  Furthermore, $Z$ is counital if it comes with a collection of maps
\[ Z^{(I)} \rightarrow X^{I_1} \times Z^{(I_2)} \]
for each partition $I=I_1\sqcup I_2$, which extends the corresponding map over the complement of the diagonal.  We demand that these be isomorphisms over the formal neighborhood of the diagonal.
\end{defn}

Given a $D_X$ space $Y$, the spaces of multijets form a counital factorization space in the obvious way (the counit comes from the maps $X^{I_1}\times H_{I_2} \rightarrow H_{I_1\sqcup I_2}$).

\begin{rem}
In the definition of a counital factorization space $Z$, we demanded that the counit map extend to an isomorphism over the formal neighborhood of the diagonal.  In fact, in the case that $Z$ has a well-behaved cotangent complex (in particular, if $X$ is a scheme or an algebraic stack) this extra condition is equivalent to $Z$ being a derived space over $Ran(X)$, i.e. that we have a compatible family of isomorphisms
\[ Z^{(J)} \times^L_{X^J} X^I \simeq Z^{(I)} \]
for every surjection $J\surj I$.  In particular if the $Z^{(I)}$ are algebraic stacks, we have that $Z^{(J)} \times_{X^J} X^I \simeq Z^{(J)} \times^L_{X^J} X^I$.
\end{rem}

Now, suppose that $\{Z^{(I)}\}$ is a counital factorization space.  Then each $Z^{(I)}$ acquires a canonical connection. Namely, on $X^{I\sqcup I}$, we have maps
\[ X^I\times Z^{(I)} \leftarrow Z^{(I\sqcup I)} \rightarrow Z^{(I)}\times X^I \]
which are isomorphisms over the formal neighborhood of the diagonal.  This is exactly the data of a connection on $Z^{(I)}$.  These connections are compatible with the factorization structure.

\begin{thm}\label{factequiv}
The functor
\[ \{D_X\mbox{ Spaces}\} \longrightarrow \{\mbox{Counital Factorization Spaces}\} \]
given by
$ Y \mapsto \{\mathcal{J}_I(Y)\} $
is right adjoint to the functor $\{Z^{(I)}\}\mapsto Z^{([1])}$.
\end{thm}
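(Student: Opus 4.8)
The plan is to exhibit, for a $D_X$-space $Y$ and a counital factorization space $Z$, a natural bijection
\[ Hom_{\mathrm{Fact}}(Z, \{\mathcal{J}_I(Y)\}) \simeq Hom_{D_X}(Z^{([1])}, Y), \]
where $Z^{([1])}$ is regarded as a $D_X$-space via the canonical connection constructed just above. I would realize this adjunction through its unit and counit. For the counit, I observe that taking $I=[1]$ in the multijet Proposition above gives $H_{[1]} = \{x=x_1\}$, the diagonal, for which both projections $\pi_1,\pi_2$ are isomorphisms onto $X$; hence $\mathcal{J}_{[1]}(Y) = (\pi_2^{dR})_*(\pi_1^{dR})^*(Y') \times_{X^{dR}} X \simeq Y$, naturally in $Y$ and compatibly with connections. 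This identification $\mathrm{ev}_1 \circ \mathcal{J} \simeq \mathrm{id}$ is the counit, and it produces the map $\Phi: f \mapsto f^{([1])}$ sending a morphism of factorization spaces to a map $Z^{([1])} \to Y$. This map is horizontal because the connection on $Z^{([1])}$ is induced by the counital maps, the crystal structure on $Y \simeq \mathcal{J}_{[1]}(Y)$ arises the same way on the multijet factorization space, and $f$ respects the counital structure.

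For the inverse, equivalently for the unit $u_Z: Z \to \mathcal{J}(Z^{([1])})$, I would use the Proposition to reduce the construction of $u_Z^{(I)}: Z^{(I)} \to \mathcal{J}_I(Z^{([1])})$ to producing a map over $X^{dR}$
\[ Z^{(I)} \times_{(X^I)^{dR}} H_I^{dR} \longrightarrow Z^{([1])}. \]
Concretely, this is the same as specifying a horizontal section over the formal neighborhood $\hat{X}_S$ of the union of graphs. Over $H_I$ the universal point of $X$ coincides with one of the marked points $x_i$, so near this locus the counital isomorphism $Z^{(I)} \isomarrow X^{I \setminus \{i\}} \times Z^{([1])}$ for the partition $I = (I\setminus\{i\}) \sqcup \{i\}$ is available; composing with the projection to $Z^{([1])}$, which sits over the $x_i$-coordinate, gives the required map. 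One then sets $\Psi(g) = \mathcal{J}(g) \circ u_Z$, and horizontality of $g: Z^{([1])} \to Y$ is exactly what guarantees that the resulting sections extend horizontally over $\hat{X}_S$ and hence land in $\mathcal{J}(Y)$.

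I would then verify that $\{u_Z^{(I)}\}$ is a morphism of factorization spaces: over the locus split by a surjection $J \surj I$, the factorization isomorphisms for $Z$ together with the naturality of multijets under the maps $H_{I_1} \times X^{I_2} \to H_I$ make the counital reductions to $Z^{([1])}$ compatible, which is precisely the stated compatibility of the counital structure with factorization. The triangle identities then reduce to two checks: that $\Phi \Psi = \mathrm{id}$, which holds since restricting $u_Z$ to $I=[1]$ uses $H_{[1]} \simeq X$ and returns the identity of $Z^{([1])}$; and that $\Psi \Phi = \mathrm{id}$, which holds because both $f$ and its reconstruction are determined on the distinct-points locus by factorization and along the diagonals by the counital isomorphisms.

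The main obstacle is the construction and coherence of the unit over the diagonals in $H_I$. On the open stratum a single index $i$ is singled out, but along deeper diagonals several of the $x_i$ collide, and one must check that the various counital reductions to $Z^{([1])}$ agree and that the resulting section over $H_I^{dR}$ (equivalently, the horizontal section over $\hat{X}_S$) is well-defined and horizontal, and moreover compatible with the factorization isomorphisms. The generic locus is immediate from factorization, so essentially the entire content of the proof is this extension across the diagonal.
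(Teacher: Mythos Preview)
Your overall strategy---exhibit the counit via $\mathcal{J}_{[1]}(Y)\simeq Y$, and construct the unit by producing a map $\pi_2^{dR*}(Z'^{(I)})\to \pi_1^{dR*}(Z'^{([1])})$ over $H_I^{dR}$ and then using the $(\pi_2^{dR*},\pi_{2*}^{dR})$ adjunction---matches the paper's. The difference is in how that map over $H_I^{dR}$ is built.

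You try to build it component by component: over the stratum $\{x=x_i\}\subset H_I$ you invoke the counital map $Z^{(I)}\to X^{I\setminus\{i\}}\times Z^{(\{i\})}$. As you correctly flag, this forces you to check that the resulting maps agree on the (formal completions of the) intersections of these strata, and you do not actually carry out that coherence check. So the argument, as written, has a genuine gap at precisely the point you identify as the main obstacle.

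The paper sidesteps this entirely by introducing the auxiliary space $Z^{(I\sqcup[1])}$ and using two \emph{global} counital maps at once: $Z'^{(I\sqcup[1])}\to p_2^{dR*}Z'^{(I)}$ (forgetting the extra point; this is an isomorphism over all of $H_I^{dR}$) and $Z'^{(I\sqcup[1])}\to p_1^{dR*}Z'^{([1])}$ (forgetting $I$). Restricting both to $H_I^{dR}$ and inverting the first yields the desired $\pi_2^{dR*}(Z'^{(I)})\to\pi_1^{dR*}(Z'^{([1])})$ in one stroke, with no patching required. If you adopt this device, your coherence obstacle disappears; the compatibility of the various counital maps is then a single global statement rather than a stratum-by-stratum verification.
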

\begin{proof}
Let $Y = Y'\times_{X^{dR}} X$ be a $D_X$-space and $\{Z^{(I)}\}$ a factorization space.
Since each $Z^{(I)}$ has a canonical connection along $X^I$, we have that $Z^{(I)} = Z'^{(I)}\times_{(X^I)^{dR}} X^I$.  Because the connection is compatible with factorization, the $Z'^{(I)}$ form a factorization space over $X^{dR}$.  Now, suppose we have a map $Z'^{([1])} \rightarrow Y'$ (which is equivalent to a map of the corresponding $D_X$-spaces).  The counit for $Z$ gives
\[ Z'^{(I\sqcup [1])} \rightarrow p_2^{dR *} Z'^{(I)} \]
which is an isomorphism over $H_I^{dR}$.  The counit also gives a map
\[ Z'^{(I\sqcup [1])} \rightarrow p_1^{dR *}(Z'^{([1])}) .\]
Restricting to $H_I^{dR}$, we get
\[ i^{dR *}(Z'^{(I\sqcup [1])})\simeq \pi_2^{dR *}(Z'^{(I)})\rightarrow \pi_1^{dR *}(Z'^{([1])}) .\]
It follows that the map $Z'^{([1])}\rightarrow Y'$ gives maps
\[ \pi_2^{dR *}(Z'^{(I)})\rightarrow \pi_1^{dR *} (Z'^{([1])}) \rightarrow \pi_1^{dR *}(Y') .\]
By adjunction, we have
\[ Z'^{(I)}\rightarrow \pi_{2*}^{dR} \pi_1^{dR *}(Y').\]
Pulling back to $X^I$, we obtain the maps
\[ Z^{(I)} \rightarrow \mathcal{J}_I(Y) \]
which give a morphism of factorization spaces.  In fact, every morphism $\{Z^{I}\}\rightarrow \{\mathcal{J}_I(Y)\}$ is obtained in this way by the same argument.
\end{proof}

Note that for a $D_X$ space $Z$, we have that $Z^{([1])} \simeq Z$.  It follows that the functor
\[ \{D_X\mbox{ Spaces}\} \longrightarrow \{\mbox{Counital Factorization Spaces}\} \]
is fully faithful.

\begin{cor} Suppose that $Z$ is a counital factorization space with each $Z^{(I)}$ an algebraic stack with affine diagonal admitting an fpqc cover by a scheme.  Then $Z$ is the space of multijets of $Z^{([1])}$.
\end{cor}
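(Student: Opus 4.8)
The plan is to deduce the statement from the adjunction of Theorem \ref{factequiv} by showing that a single canonical comparison morphism is an isomorphism. Write $Y = Z^{([1])}$, viewed as a $D_X$-space. Since $\mathcal{J}_{[1]}(Y)\simeq Y$, the multijet functor is fully faithful, as noted after Theorem \ref{factequiv}, so a counital factorization space lies in its essential image precisely when the unit of the adjunction is an isomorphism. Thus it suffices to show that each component
\[ \eta_I : Z^{(I)} \longrightarrow \mathcal{J}_I(Y) \]
of the unit $\eta : Z \to \{\mathcal{J}_I(Y)\}$ is an isomorphism of algebraic stacks.

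First I would reduce this to a formal-local assertion on $X^I$. Each $Z^{(I)}$ carries its canonical connection, so $Z^{(I)} = Z'^{(I)}\times_{(X^I)^{dR}} X^I$, and by the characterization of multijets above, $\mathcal{J}_I(Y) = (\pi_2^{dR})_*(\pi_1^{dR})^*(Y')\times_{(X^I)^{dR}} X^I$. Hence $\eta_I$ is the base change of a morphism $Z'^{(I)}\to (\pi_2^{dR})_*(\pi_1^{dR})^*(Y')$ of spaces over $(X^I)^{dR}$, and it is enough to treat the latter. By the defining adjunction for Weil restriction, this morphism is the one classifying the map $\pi_2^{dR*}(Z'^{(I)})\simeq i^{dR*}(Z'^{(I\sqcup[1])})\to \pi_1^{dR*}(Y')$ built in the proof of Theorem \ref{factequiv} from the two counit isomorphisms.

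To prove this is an isomorphism I would argue by induction on $|I|$, testing bijectivity on $S$-points for affine $S\to X^I$. Over the open locus where the $I$ points are not all equal, I choose a partition separating them and use the factorization isomorphism for $Z$ together with that for multijets; the inductive hypothesis applied to the two smaller index sets then gives that $\eta_I$ is an isomorphism there. Along the deepest diagonal I use that the counit maps $Z^{(I\sqcup[1])}\to p_2^{*}Z^{(I)}$ and $Z^{(I\sqcup[1])}\to p_1^{*}Y$ (in the notation of the proof of Theorem \ref{factequiv}) are isomorphisms over the formal neighborhood of $H_I$: given $z\in Z^{(I)}(S)$ over $\phi:S\to X^I$, pulling $z$ back and composing the two counit isomorphisms produces a horizontal section $\hat{X}_S\to Y$ over the completion of $X\times S$ along the graphs of $\phi$, that is, an $S$-point of $\mathcal{J}_I(Y)$. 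This construction is inverse to evaluating a horizontal section along the graphs and agrees with $\eta_I$.

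The main obstacle is the gluing step: the identifications above are produced stratum-by-stratum on $X^I$ — by factorization away from the diagonals and by the counit isomorphisms formally along them — and one must patch them into a single isomorphism of algebraic stacks. This is where the hypotheses enter. Because each $Z^{(I)}$ has affine diagonal and admits an fpqc cover by a scheme, the space is determined by its formal-local and factorization data (one is in the situation of the earlier remark, so the counital condition already yields the derived-space property $Z^{(J)}\times^L_{X^J}X^I\simeq Z^{(I)}$), and one may check the assertion fpqc-locally, reducing the gluing to the representable case, where the corresponding statement for $D_X$-schemes is available from \cite{CHA}. The technical heart is verifying that the factorization isomorphisms for $Z$ are compatible with the counit isomorphisms across overlapping diagonal strata, so that the inductive identification on the open locus and the formal identification along the diagonals agree on overlaps; the affine diagonal then guarantees that these descend and that the pointwise inverse of the previous paragraph is itself a morphism of stacks rather than a mere collection of compatible points.
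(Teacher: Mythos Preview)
Your overall shape---produce the unit map $\eta_I$ from the adjunction and then argue it is an isomorphism---matches the paper's first move, but from there the two arguments diverge in an essential way, and your version has a genuine gap at exactly the place you flag as ``the main obstacle.''

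The paper does not try to glue an isomorphism over an open stratum with one over a formal neighborhood. Instead it runs an induction on the codimension of the diagonal stratification of $X^I$ and, at each step, works entirely on the level of derived categories of quasi-coherent sheaves: one shows that $\phi_{n*}$ and $\phi_n^*$ are mutually inverse equivalences $DCoh(Z_n)\simeq DCoh(Z'_n)$ (using that the restriction to the closed stratum is tensoring with a perfect complex), then checks $t$-exactness, and finally invokes Lurie's Tannakian reconstruction theorem \cite{tannaka} to conclude that the map of stacks is itself an isomorphism. The hypotheses ``affine diagonal'' and ``fpqc cover by a scheme'' are precisely the hypotheses under which that Tannakian result applies; this is where they are actually used, not in any descent-to-representable argument.

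Your proposal, by contrast, never mentions Tannaka duality and instead attempts to patch the isomorphism on the open locus with one on the formal neighborhood of the deepest diagonal. Two problems: first, ``isomorphism over $U$'' together with ``isomorphism over the formal completion along the complement'' does not by itself force a global isomorphism of algebraic stacks; making this precise requires a Beauville--Laszlo type gluing statement for stacks, which you do not supply. Second, your suggested resolution---pass fpqc-locally to a scheme cover and cite \cite{CHA}---does not work: \cite{CHA} does not contain this statement even for $D_X$-schemes, and an fpqc cover of $Z^{(I)}$ by a scheme gives no a priori comparison with a cover of $\mathcal{J}_I(Y)$, so the reduction is not available. Relatedly, your description of the inverse along the diagonal (``evaluate a horizontal section along the graphs'') does not produce a point of $Z^{(I)}$; the correct reason $\eta_I$ is an isomorphism on the formal neighborhood of the main diagonal is that $\eta_I$ is a morphism of $D_{X^I}$-spaces which restricts to the identity on the diagonal, but this observation alone does not close the global gap.

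In short: the missing idea is the passage through $QCoh$ and Tannaka duality. That is what converts the stratum-by-stratum information into an isomorphism of stacks and is the actual content behind the hypotheses on $Z^{(I)}$.
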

\begin{proof}
By Theorem \ref{factequiv}, we have a map
\[ \phi: Z^{(I)} \rightarrow \mathcal{J}_I(Z^{([1])}) .\]
Let $H_n \subset X^I$ be the union of diagonals of codimension $> n$.  We will show by induction that $\phi$ is an isomorphism over $X^I-H_n$, and in particular is an isomorphism over $X^I$.  By construction, $\phi$ is an isomorphism over $X^I-H_0$. Now, suppose that $\phi$ is an isomorphism over $X^I-H_{n-1}$. To simplify notation, let
\[ \phi_n: Z_n:=Z^{(I)}\times_{X^I} X^I-H_{n-1} \rightarrow Z'_n:=\mathcal{J}_I(Z^{([1])})\times_{X^I} X^I-H_{n-1} .\]
We have, by induction and construction, that $\phi_n$ is an isomorphism over $X^I-H_{n-1}$ and its complement $H_{n-1}-H_{n} = V(f_n)\subset X^I-H_{n}$. Now for $\mathcal{F} \in DCoh(Z'_n)$ a complex of quasi-coherent sheaves on $Z'_n$, we have that the unit map
\[ \eta: \mathcal{F}\rightarrow \phi_{n *}\phi_n^*(\mathcal{F}) \]
is an isomorphism when restricted to $Z'_{n-1}$.  Let $v': Z'_n-Z'_{n-1}\hookrightarrow Z'_n$ be the inclusion.  Since $v'_*v'^*: DCoh(Z'_n)\rightarrow DCoh(Z'_n)$ is given by tensoring with a perfect complex (and similarly for $Z_n$), we have that $\eta$ is an isomorphism over $Z'_n-Z'_{n-1}$.  It follows that $\eta$ is an isomorphism.  Similarly, the counit map
\[ \phi_n^*\phi_{n *} \mathcal{F} \rightarrow \mathcal{F} \]
is also an isomorphism.
Thus, $\phi_{n *}$ and $\phi_n^*$ are mutually inverse equivalences of categories.\\
\\
Let's show that $\phi_n^*$ is $t$-exact for the natural $t$-structures on the two derived categories.  Clearly, $\phi_n^*$ preserves connective objects (in the homological grading convention).  Now, suppose that $\mathcal{F} \in DCoh(Z')$ is coconnective, i.e. $H^i(\mathcal{F})=0$ for $i<0$.  We have an exact triangle
\[ \mathcal{F} \overset{f_n}{\rightarrow} \mathcal{F} \rightarrow v'_*v'^*\mathcal{F} \]
It follows that $H^i(v'_*v'^*(\mathcal{F}))=0$ for $i<-1$.  Now, since $\phi_n^*(\mathcal{F})$ is coconnective when restricted to $Z_{n-1}$, we have that for $i<0$ and for each (local) section $x \in H^i(\phi_n^*(\mathcal{F}))$, $f_n^k x = 0$ for some $k\geq 0$.  Now suppose there exists a nonzero $x\in H^i(\phi_n^*(\mathcal{F}))$ for $i<0$.  Without loss of generality, we can assume that $f_nx = 0$.  The exact triangle
\[ \phi_n^*(\mathcal{F})\overset{f_n}{\rightarrow}\phi_n^*(\mathcal{F})\rightarrow v_*v^*\phi_n^*(\mathcal{F}) \]
(where $v: Z_n-Z_{n-1} \hookrightarrow Z_n$) gives a nonzero class in $H^{i-1}(v_*v^*(\mathcal{F})) \simeq H^{i-1}(v'_*v'^*(\mathcal{F}))$ = 0, a contradiction.  It follows that $\phi_n^*$ induces an equivalence of the abelian tensor categories $QCoh(Z)$ and $QCoh(Z')$, which by tracing the argument of \cite{tannaka} implies that $\phi_n$ is an isomorphism.
\end{proof}

\subsection{$D_X$ Group Schemes}
Let $G$ be an affine $D_X$ group scheme, and
$\mathcal{O}_G$ the algebra of functions.  In this case, $\mathcal{O}_G$ naturally has the structure of a commutative chiral Hopf algebra and we obtain a factorization group $D_X$ scheme, i.e. for each power of the curve $X^{I}$, we have an
affine $D_X$ group scheme $G^{(I)}$.

\begin{defn}
Let $G$ be an affine $D_X$ group scheme, $I_0$ a finite set.  A $G$
module on $X^{I_0}$ is a quasi-coherent sheaf on $X^{I_0}$ which is a comodule
for the Hopf algebra $\mathcal{O}_{G^{(I_0)}}$.
\end{defn}

Now, let $G$ be a smooth affine $D_X$ group scheme.
Let $\Omega^1_{G^{(I_0)}, e}$ be the sheaf of differentials of $G^{(I_0)}$
(as a scheme over $X^{I_0}$) pulled back along the identity section.
Since $G^{(I_0)}$ is naturally a $D_{X^{I_0}}$ scheme,
$\Omega^1_{G^{(I_0)},e}$ acquires the structure of a $D_{X^{I_0}}$ module
compatible with the Lie coalgebra structure.

Since $\Omega^1_{G,e} = L^{\vee}$, where $L$ is the tangent space of $G$ at the identity, $L$ naturally acquires the structure of a Lie-* algebra.

As for any affine group
scheme, if $M$ is a module over $G^{(I_0)}$, it becomes a comodule over
the Lie-coalgebra $\Omega^1_{G^{(I_0)},e}$.  The goal of this section is to construct a functor from $G$-modules to Lie-* $L$-modules.  In particular, we will express $\Omega^1_{G^{(I_0)},e}$ in terms of $L$.

\begin{propn}
Let $G$ be an affine $D_X$ group scheme with Lie-* algebra $L$.  Then the Lie coalgebra of differentials at the identity of $G^{(I_0)}$ is isomorphic to $L^{\vee (I_0)}$.
\end{propn}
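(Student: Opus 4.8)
The plan is to assemble the cotangent-at-identity sheaves $\Omega^1_{G^{(I)},e}$ into a unital linear factorization sheaf on $Ran(X)$ and then appeal to the equivalence of Section \ref{linfact} between unital linear factorization sheaves and $D_X$-modules. Recall that $\{G^{(I)}\}$ is not a single group scheme but a factorization group $D_X$ scheme, so each $\Omega^1_{G^{(I)},e}$ is a $D_{X^I}$-module (as noted above) and the whole collection inherits factorization and counit data from $\{G^{(I)}\}$. Since $G^{([1])}\simeq G$ and $\Omega^1_{G,e}=L^\vee$, the value of this collection at $[1]$ is $L^\vee$. Granting that $\{\Omega^1_{G^{(I)},e}\}$ is a unital linear factorization sheaf, the inverse equivalence $M\mapsto\{M_{(I)}\}$ of Section \ref{linfact} forces compatible isomorphisms $\Omega^1_{G^{(I)},e}\simeq (L^\vee)_{(I)}$; evaluating at $I=I_0$ gives $\Omega^1_{G^{(I_0)},e}\simeq L^{\vee(I_0)}$, as claimed.

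To produce the factorization isomorphisms, fix a surjection $J\surj I$ and restrict to the open $U^{(J/I)}\subset X^J$ on which the blocks of points are disjoint. Over this locus the factorization structure of the group scheme gives $G^{(J)}\simeq \times_{i\in I} G^{(J_i)}$ as group schemes relative to $X^J\simeq\prod_i X^{J_i}$. The cotangent space at the identity of a product of pointed schemes over a product base is the external direct sum of the cotangent spaces of the factors, so $j^{[J/I]!}\,\Omega^1_{G^{(J)},e}\simeq j^{[J/I]!}(\boxplus_{i\in I}\Omega^1_{G^{(J_i)},e})$; since $j^{[J/I]}$ is an open immersion this is exactly the required factorization isomorphism, and its compatibility under refinements $J\surj J'\surj I$ follows from the associativity of the factorization data of $\{G^{(I)}\}$.

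For the unital structure, counitality of the factorization group scheme supplies, for each partition $I=I_1\sqcup I_2$, a map of group schemes $G^{(I)}\to X^{I_2}\times G^{(I_1)}$ over $X^I=X^{I_2}\times X^{I_1}$ which is an isomorphism along the formal neighborhood of the diagonal. Passing to cotangent spaces at the identity (a contravariant operation) and using $\Omega^1_{X^{I_2}\times G^{(I_1)},e}\simeq pr_1^*\,\Omega^1_{G^{(I_1)},e}$ for the projection $pr_1:X^I\to X^{I_1}$, one obtains a map $pr_1^*\,\Omega^1_{G^{(I_1)},e}\to\Omega^1_{G^{(I)},e}$; since $pr_1$ is smooth of relative dimension $|I_2|$ this is the same datum as $pr_1^!\,\Omega^1_{G^{(I_1)},e}[I_1-I]\to\Omega^1_{G^{(I)},e}$, which is precisely the unit map required in Section \ref{linfact}. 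Compatibility of these units is again inherited from the factorization group scheme.

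The point demanding genuine care is the bookkeeping in the last step: matching the naive pullback $pr_1^*$ coming from the counit of $\{G^{(I)}\}$ with the shifted $!$-pullback $pr_1^![I_1-I]$ appearing in the definition, and checking that the resulting isomorphism of $D$-modules respects the Lie coalgebra structures on both sides, so that the equivalence upgrades to an isomorphism of Lie coalgebras. This is most transparent fiberwise: over $(x_i)\in X^{I_0}$ the factorization group degenerates to the product $\prod_{x\in\{x_i\}}G_x$ over the distinct points, whose cotangent Lie coalgebra at the identity is $\bigoplus_x (L^\vee)_x$, matching the fiber of $L^{\vee(I_0)}$ recorded in Section \ref{linfact}; the same degeneration underlies the heuristic identification of $\Omega^1_{G^{(I_0)},e}$ with the relative cotangent at the identity of the multijet space $\mathcal{J}_{I_0}(G)$, which is the conceptual source of the statement.
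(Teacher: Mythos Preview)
Your proposal is correct and follows essentially the same approach as the paper: both arguments observe that passing to cotangent spaces at the identity turns the counital factorization group scheme $\{G^{(I)}\}$ into a unital linear factorization sheaf $\{\Omega^1_{G^{(I)},e}\}$, and then invoke the equivalence of Section~\ref{linfact} together with $\Omega^1_{G,e}=L^\vee$ to conclude. The paper's proof is a terse two sentences leaving the verification of the linear factorization axioms implicit, whereas you spell out how factorization and counit on the group side induce the additive factorization and unit maps on cotangents; your extra care about the $pr_1^*$ versus $pr_1^![I_1-I]$ bookkeeping and the Lie coalgebra compatibility is warranted and not addressed explicitly in the paper.
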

\begin{proof}
Since $\{G^{(I_0)}\}$ is a counital factorization space, we have that $\{\Omega^1_{G^{(I_0)},e}\}$ is a unital linear factorization sheaf.  Furthermore, the coalgebra structure is compatible with the factorization structure.  Thus since $\Omega^1_{G,e} = L^{\vee}$, we have $\Omega^1_{G^{(I_0)},e} = L^{\vee (I_0)}$.
\end{proof}

\begin{cor}
For a $D_X$ group scheme $G$, there is a functor
\[ \{ G^{(I_0)}\mbox{-modules} \} \rightarrow \{\mbox{Lie-* $L$-modules on $X^{I_0}$} \} \]
which is fully faithful when $G$ is connected.
\end{cor}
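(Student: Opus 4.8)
The plan is to build the functor by differentiating the coaction and then applying the two equivalences already in hand. First I would invoke the general fact, recalled just above the statement, that for any affine group scheme a comodule over the Hopf algebra $\mathcal{O}_{G^{(I_0)}}$ differentiates to a comodule over the Lie coalgebra of invariant differentials $\Omega^1_{G^{(I_0)},e}$; this produces a functor from $G^{(I_0)}$-modules to $\Omega^1_{G^{(I_0)},e}$-comodules. By the Proposition just proved there is a canonical isomorphism of Lie coalgebras $\Omega^1_{G^{(I_0)},e}\simeq L^{\vee (I_0)}$, so the target is the category of $L^{\vee (I_0)}$-comodules, which the coaction Proposition of Section \ref{star} identifies with the category of Lie-* $L$-modules on $X^{I_0}$. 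Composing these three steps yields the functor, and since each is built from the $\mathcal{O}_{X^{I_0}}$-linear data it is a functor over quasi-coherent sheaves on $X^{I_0}$.

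For the second assertion I would reduce full faithfulness to a comparison of invariants. By Lemma \ref{top}, Lie-* $L$-modules on $X^{I_0}$ are the same as modules over the topological Lie algebra $\mathcal{L}_0^{(I_0)}$, which is precisely the Lie algebra of $G^{(I_0)}$; under this identification the functor above is the differentiation functor sending a $G^{(I_0)}$-module to its induced infinitesimal action. A map $f\colon M\to N$ of underlying sheaves is a morphism of $G^{(I_0)}$-modules exactly when it is fixed by the $G^{(I_0)}$-action on the internal Hom $\mathcal{H}om(M,N)$, and a morphism of $\mathcal{L}_0^{(I_0)}$-modules exactly when it is annihilated by the $\mathcal{L}_0^{(I_0)}$-action there. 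Thus full faithfulness is equivalent to the equality of the $G^{(I_0)}$-invariants and the $\mathcal{L}_0^{(I_0)}$-invariants inside $\mathcal{H}om(M,N)$. Over a field of characteristic $0$ this equality holds for every connected affine group scheme, so it remains to check that $G^{(I_0)}$ is connected. This follows from the connectedness of $G$: the multijet is assembled from the jet groups of $G$ along the formal neighborhoods of the diagonals, and for smooth $G$ these jet schemes are affine bundles over $G$ and hence connected, so each fiber of $G^{(I_0)}$ --- an inverse limit of connected jet groups --- is connected.

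The hard part will be making the comparison of invariants precise in the relative, pro-algebraic setting. Since the jet groups are of infinite type, $G^{(I_0)}$ is only pro-algebraic over $X^{I_0}$ and $L^{\vee (I_0)}$ is correspondingly a topological (pro-coherent) object, so one must verify that the classical identity ``$G$-invariants $=$ Lie-algebra invariants for connected $G$'' survives passage to the inverse limit and is compatible with the $\mathcal{O}_{X^{I_0}}$-linearity and continuity built into the notions of comodule and Lie-* module. I expect this bookkeeping, rather than any further geometric input, to be the principal obstacle; once the invariants are matched fiberwise and shown to glue over $X^{I_0}$, full faithfulness follows formally.
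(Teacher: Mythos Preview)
Your construction of the functor is exactly what the paper intends: the corollary is stated without proof because it is meant to follow immediately from the chain you describe---differentiate the $\mathcal{O}_{G^{(I_0)}}$-coaction to an $\Omega^1_{G^{(I_0)},e}$-coaction, identify $\Omega^1_{G^{(I_0)},e}\simeq L^{\vee(I_0)}$ by the preceding Proposition, and invoke the coaction Proposition of \S\ref{star} to land in Lie-* $L$-modules.

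For full faithfulness your strategy is also the intended one, but the detour through Lemma~\ref{top} and the topological Lie algebra $\mathcal{L}_0^{(I_0)}$ is unnecessary. The cleaner route, and the one the paper's setup points to, is to stay on the coalgebra side: the differentiation functor from $\mathcal{O}_H$-comodules to $\Omega^1_{H,e}$-comodules is fully faithful for any connected affine group scheme $H$ in characteristic~$0$ (this is the comodule reformulation of ``$H$-invariants $=$ $\mathrm{Lie}(H)$-invariants''), applied to $H=G^{(I_0)}$. Your reduction to the connectedness of $G^{(I_0)}$ and the jet-bundle argument for it are correct, and your caveat about the pro-algebraic nature of $G^{(I_0)}$ is handled automatically in the comodule formulation, since any $\mathcal{O}_{G^{(I_0)}}$-comodule is a filtered colimit of comodules factoring through finite-type quotients of $G^{(I_0)}$, where the classical statement applies directly. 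So the ``bookkeeping'' you anticipate largely evaporates if you avoid passing to the topological dual.
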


\section{Tensor Categories and Modules}\label{tensor}
Recall from \cite{CHA}, that we can embed the pseudo tensor category
of sheaves on the Ran space into a larger tensor category
$\mathcal{M}(X^\mathcal{S})$ (for both the $*$ and the chiral pseudo
tensor structures). We will describe a similar picture for chiral and
Lie-* modules.  In this section we work primarily in the derived setting.

\subsection{Tensor Structures}
Recall from \cite{CHA} that $\mathcal{M}(X^{\mathcal{S}})$ has two tensor structures so that the embedding
\[ \mathcal{M}(X) \rightarrow \mathcal{M}(X^{\mathcal{S}}) \]
is an embedding of pseudo-tensor categories with respect to both the $*$ and the chiral pseudo-tensor structures.  Furthermore, chiral algebras and Lie-* algebras are Lie algebras with respect to the chiral and *-(pseudo-)tensor products respectively.  We describe a similar picture for $D\mathcal{M}(X^{I_0})$ and $D\mathcal{M}(Ran_{I_0}(X))$.

The category $\mathcal{M}(X^{I_0})$ is a pseudo-tensor module category for the pseudo-tensor category $\mathcal{M}(X)$ (for both structures).  Namely, for a collection of objects $M_{i} \in \mathcal{M}(X)$ for $i\in I$ and $N_1,N_2\in\mathcal{M}(X^{I_0})$ we have
\[ Hom_I^*(\{M_i; N_1\}, N_2) = Hom_{\mathcal{M}(X^{\tilde{I}})}\left(\underset{i\in I}{\boxtimes} M_i \boxtimes N_1, \Gamma_{(I_0\subset \tilde{I})}(N_2)\right) \]
and
\[ Hom_I^{ch}(\{M_i; N_1\}, N_2) = Hom_{\mathcal{M}(X^{\tilde{I}})}\left(j_\*j^!\left(\underset{i\in I}{\boxtimes} M_i \boxtimes N_1\right), \Gamma_{(I_0\subset \tilde{I})}(N_2)\right) \]
where $j: U^{(\tilde{I})}\rightarrow X^{\tilde{I}}$ is the inclusion of the complement of the diagonals.  We have that a chiral (resp. Lie-*) module over a chiral (resp. Lie-*) algebra $\mathcal{A}$ on $X^{I_0}$ is exactly a Lie module in the pseudo-tensor module category $\mathcal{M}(X^{I_0})$ for the Lie algebra $\mathcal{A}$ in the pseudo-tensor category $\mathcal{M}(X)$ (with the appropriate pseduo-tensor structure).

The preceding discussion works just as well in the derived setting, i.e. $D\mathcal{M}(X)$ is has two pseudo tensor structures and $D\mathcal{M}(X^{I_0})$ is a module category over it for both of them.  The embedding $D\mathcal{M}(X)\rightarrow D\mathcal{M}(X^{S})$ is a fully faithful embedding of pseduo-tensor categories (for both pseduo-tensor structures) where the tensor products in $D\mathcal{M}(X^{S})$ are given by
\[ (\otimes^* M_i)_{X^J} := \underset{J\surj I}{\oplus} \boxtimes_I (M_i)_{X^{J_i}} \]
and
\[ (\otimes^{ch} M_i)_{X^J} := \underset{J\surj I}{\oplus}\  j_\*^{(J/I)}j^{(J/I) !} \boxtimes_I (M_i)_{X^{J_i}} \]
for objects $M_i$ in $D\mathcal{M}(X^\mathcal{S})$, $i\in I$.  \\
\\
For a finite set $I_0$, the category $D\mathcal{M}(X^{\mathcal{S}_{I_0}})$ is a module category over $D\mathcal{M}(X^{\mathcal{S}})$ with respect to both tensor structures.
Namely, for $M \in D\mathcal{M}(X^\mathcal{S})$ and $N\in
D\mathcal{M}(X^{\mathcal{S}_k})$,
\[ (M\otimes^* N )_{X^{\tilde{J}}} := \underset{\tilde{J}\surj [1]^+}{\oplus} M_{X^{J_1}} \boxtimes N_{X^{J_\star}}\]
and
\[ (M\otimes^{ch} N )_{X^{\tilde{J}}} := \underset{\tilde{J}\surj [1]^+}{\oplus} j_\*^{(\tilde{J}/[1]^+)}j^{(\tilde{J}/[1]^+) !}M_{X^{J_1}} \boxtimes N_{X^{J_\star}} .\]

Recall that we have adjoint functors
\[ r_*: D\mathcal{M}(X^{\mathcal{S}_{I_0}}) \lrarrow D\mathcal{M}(Ran_{I_0}) : r^! \]
with $r^!$ fully faithful.  In the case that $I_0$ is the empty set, we can define the chiral and *-tensor products on $D\mathcal{M}(Ran(X))$; for $M_1,M_2 \in D\mathcal{M}(Ran)$ set
\[ M_1 \otimes^* M_2 = r_* (r^!(M_1)\otimes^*r^!(M_2)) \]
and similarly for the chiral tensor product.  We have that both $r^!$ and $r_*$ are tensor functors.
In the same way, we define the action of $D\mathcal{M}(Ran)$ on $D\mathcal{M}(Ran_{I_0})$.  Our goal is to prove

\begin{thm}\label{pt}
The functor
\[ r_{I_0*}: D\mathcal{M}(X^{I_0}) \rightarrow D\mathcal{M}(Ran_{I_0}) \]
is a fully faithful embedding of pseudo-tensor module $D\mathcal{M}(X)$ categories (for both pseudo-tensor structures).
\end{thm}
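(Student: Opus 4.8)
The plan is to leverage the fact, established above, that $r_{I_0*}$ is already fully faithful as a plain functor; what remains is to promote this to a statement about the pseudo-tensor module structures. Concretely, for objects $M_i \in D\mathcal{M}(X)$, $i \in I$, and $N_1, N_2 \in D\mathcal{M}(X^{I_0})$, I would show that the canonical comparison map
\[ Hom_I^{ch}(\{M_i; N_1\}, N_2) \longrightarrow Hom_I^{ch}(\{r_{[1]*}M_i; r_{I_0*}N_1\}, r_{I_0*}N_2) \]
is an isomorphism, together with the analogous statement for the $*$-structure. Since the $*$-computation is identical to the chiral one with the operation $j_\* j^!$ deleted throughout, I would carry out the chiral case and remark that the $*$-case follows verbatim. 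Here $r_{[1]*}\colon D\mathcal{M}(X)\to D\mathcal{M}(Ran)$ is the $I_0=\emptyset$ embedding, which is a pseudo-tensor functor by \cite{CHA}, and the right-hand Hom is formed for the $D\mathcal{M}(Ran)$-module structure on $D\mathcal{M}(Ran_{I_0})$.

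The main computation is to unwind the right-hand side. Using that $r_*$ is a tensor (and module-tensor) functor, together with the identities $r_{[1]*} = r_* \circ s_{[1]*}$ and $r_{I_0*} = r_* \circ s_{I_0*}$ and the adjunction $r_* \dashv r^!$ with $r^!$ fully faithful, I would pull the symbol $r_*$ out of the tensor product and then transpose, rewriting the right-hand Hom as
\[ Hom_{X^{\mathcal{S}_{I_0}}}\!\left( \big(\otimes^{ch}_{i\in I} s_{[1]*}M_i\big)\otimes^{ch} s_{I_0*}N_1, \ r^! r_{I_0*}N_2\right). \]
The source here is computed by the explicit formula for $\otimes^{ch}$ on $X^{\mathcal{S}_{I_0}}$: it is a direct sum, over surjections $\tilde{J} \surj [1]^+$, of terms $j_\*^{(\tilde{J}/[1]^+)} j^{(\tilde{J}/[1]^+)!}$ applied to external products of the diagonal-supported components of the $s_*$-objects. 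In particular the chiral tensor product of objects of the form $s_{\tilde{I}*}(-)$ is again a direct sum of such standard generators, so the Hom out of it can be evaluated componentwise through the adjunctions $s_{\tilde{I}*}\dashv s^!_{\tilde{I}}$.

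Evaluating against $r^! r_{I_0*}N_2$, whose $X^{\tilde{I}}$-component is $\Gamma_{(I_0\subset \tilde{I})}(N_2)$, I expect exactly one summand to contribute, namely the one indexed by the surjection $\tilde{I} \surj [1]^+$ with $J_1 = I$ and $J_\star = I_0$; this summand is $j_\* j^!\big(\boxtimes_{i\in I} M_i \boxtimes N_1\big)$, and pairing it against $\Gamma_{(I_0\subset \tilde{I})}(N_2)$ reproduces precisely the defining formula for $Hom_I^{ch}(\{M_i; N_1\}, N_2)$. The remaining summands are supported on the diagonals, and they should be annihilated upon mapping into $\Gamma_{(I_0\subset\tilde{I})}(N_2)$: the Cartesian-section condition defining $r^! r_{I_0*}$ forces their potential contributions to be already recorded by the transitivity $\Gamma_{(I\subset J)}\circ\Gamma_{(I_0\subset I)} = \Gamma_{(I_0\subset J)}$ and the factorization isomorphism of Lemma \ref{gfact}, so that they produce no new morphisms. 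This matching is natural in $N_1, N_2$ and the $M_i$, giving the desired isomorphism.

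The hard part will be the bookkeeping in this last step: showing that among the many summands of the chiral tensor product exactly the maximal-partition term survives, and that the diagonal summands contribute trivially. This is precisely where the factorization property of $\Gamma$ (Lemma \ref{gfact}) and the transitivity lemma for $\Gamma$ are indispensable, since they identify the diagonal contributions on the target with data already contained in the Cartesian compatibility of $r_{I_0*}N_2$, whose components have no local sections on the diagonals. A secondary point requiring care is checking that the resulting comparison isomorphism is compatible with composition of multilinear operations, so that $r_{I_0*}$ is genuinely a functor of pseudo-tensor module categories and not merely a levelwise bijection on Hom-spaces.
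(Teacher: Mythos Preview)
Your overall strategy coincides with the paper's: reduce via $r_{\tilde I *}=r_*\circ s_{\tilde I *}$, use that $r_*$ is a tensor functor and $r_*\dashv r^!$, and then identify $r_{\tilde I}^!\,r_{I_0*}(N_2)\simeq \Gamma_{(I_0\subset\tilde I)}(N_2)$. Where you diverge is in the ``hard part'' you anticipate: the paper shows there is nothing hard there. The point is Lemma~\ref{ptensor}: the tensor product (in either structure) of objects of the form $s_{*}(-)$ is not merely a direct sum of standard generators with many terms, it is a \emph{single} such generator,
\[
\Big(\bigotimes_i s_{[1]*}M_i\Big)\otimes s_{I_0*}N_1 \;\simeq\; s_{\tilde I *}\big(\boxtimes_i M_i\boxtimes N_1\big)
\quad\text{(resp. } s_{\tilde I *}\,j_\* j^!(\boxtimes_i M_i\boxtimes N_1)\text{ in the chiral case).}
\]
Applying $r_*$ gives $r_{\tilde I *}$ of this single object, and then the adjunction $r_{\tilde I *}\dashv r_{\tilde I}^!$ lands you directly on $Hom_{X^{\tilde I}}\big(\boxtimes_i M_i\boxtimes N_1,\Gamma_{(I_0\subset\tilde I)}(N_2)\big)$, which is the defining $Hom_I$. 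No cancellation of ``diagonal summands'' is needed, and neither Lemma~\ref{gfact} nor the transitivity of $\Gamma$ enters this argument; the combinatorics collapses at the level of Lemma~\ref{ptensor} (its proof is the bijection between surjections $\tilde K\surj I\sqcup\tilde J$ and pairs of surjections through a $[1]^+$-partition). So your proof will go through, but the bookkeeping you worry about evaporates once you record Lemma~\ref{ptensor}; compatibility with composition of multilinear operations is then formal.
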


\begin{cor}\label{lieequiv}
Let $L$ be a Lie-* (resp. chiral) algebra, $M$ a sheaf on $X^{I_0}$.  Then
$M$ is a Lie-* (resp. nonunital chiral) $L$ module if an only if
$r_{I_0 *}(M)$ is a Lie $r_{*}(L)$ module
for the $\otimes^*$ (resp. $\otimes^{ch}$) tensor structure.  In the case of a chiral module, $M$ is unital if in addition the restriction of $M$ to $r_*(\omega)$ is the canonical module structure.
\end{cor}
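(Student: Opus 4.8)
The plan is to deduce the corollary formally from Theorem \ref{pt} together with the characterization of modules recorded just before it. Recall from that discussion that a Lie-* (resp. nonunital chiral) $L$-module on $X^{I_0}$ is precisely a Lie module for the Lie algebra $L$ in the pseudo-tensor module category $D\mathcal{M}(X^{I_0})$ over $D\mathcal{M}(X)$, taken with respect to the $*$ (resp. chiral) pseudo-tensor structure. Concretely, the action is a single binary operation $\mu \in Hom^{*}_{[1]}(\{L; M\}, M)$ (resp.\ $Hom^{ch}_{[1]}(\{L; M\}, M)$), and the Lie action axiom (\ref{lieact}) is an identity among the ternary operations obtained from $\mu$ and the bracket of $L$ by pseudo-tensor composition. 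Thus the entire module structure is encoded in the pseudo-tensor operation spaces and their composition law, and the task is to transport this data across $r_{I_0*}$.

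First I would record that $r_*$ carries $L$ to a Lie algebra. Since the embedding $D\mathcal{M}(X)\hookrightarrow D\mathcal{M}(X^{\mathcal{S}})$ and $r_*\colon D\mathcal{M}(X^{\mathcal{S}})\to D\mathcal{M}(Ran)$ are tensor functors for both structures, and chiral (resp.\ Lie-*) algebras are Lie algebras for $\otimes^{ch}$ (resp.\ $\otimes^{*}$), the object $r_*(L)$ is a Lie algebra in the \emph{genuine} tensor category $(D\mathcal{M}(Ran),\otimes^{ch})$ (resp.\ $\otimes^{*}$). Since $D\mathcal{M}(Ran_{I_0})$ is a genuine module category over $D\mathcal{M}(Ran)$, ``Lie $r_*(L)$-module'' has its usual meaning: an action map $r_*(L)\otimes r_{I_0*}(M)\to r_{I_0*}(M)$ satisfying the Lie identity.

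The heart of the argument is to match the two notions. By Theorem \ref{pt}, $r_{I_0*}$ is a fully faithful embedding of pseudo-tensor module categories over $D\mathcal{M}(X)$, so it induces isomorphisms of all pseudo-tensor operation spaces
\[ Hom^{\bullet}_I(\{M_i; N_1\}, N_2) \isomarrow Hom^{\bullet}_I(\{r_*(M_i); r_{I_0*}(N_1)\}, r_{I_0*}(N_2)), \]
compatibly with composition. The point I would then invoke is that, after passing to the Ran space, these pseudo-tensor operations are corepresented by the genuine tensor products: the defining formulas for $\otimes^{*}$ and $\otimes^{ch}$ on $D\mathcal{M}(X^{\mathcal{S}})$ exhibit $\otimes^{\bullet}_i r_*(M_i)$ as representing $Hom^{\bullet}_I(\{M_i; -\}, -)$, and $r_*, r_{I_0*}$ are compatible with this. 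Hence the right-hand side above is naturally $Hom_{D\mathcal{M}(Ran_{I_0})}(\otimes^{\bullet}_i r_*(M_i)\otimes r_{I_0*}(N_1), r_{I_0*}(N_2))$, so the action map $\mu$ and the identity (\ref{lieact}) for $M$ transport verbatim to an action map and Lie identity for $r_{I_0*}(M)$, and conversely by full faithfulness. This yields the ``if and only if'' for nonunital chiral and for Lie-* modules.

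Finally I would treat the unit. The unit of the chiral algebra $L$ is the map $\omega_X\to L$, whose image under $r_*$ is the structure map from the unit object $r_*(\omega)$ of $(D\mathcal{M}(Ran),\otimes^{ch})$. Under the correspondence above, the unit axiom (\ref{unit}) for a chiral module is exactly the assertion that the restriction of the $r_*(L)$-action along $r_*(\omega)\to r_*(L)$ is the canonical (identity) module structure for the unit object, which is the extra condition in the statement. The main obstacle, and the step demanding the most care, is precisely the corepresentability claim of the third paragraph: one must verify that the genuine products $\otimes^{*},\otimes^{ch}$ on $D\mathcal{M}(Ran)$ and the action on $D\mathcal{M}(Ran_{I_0})$ genuinely corepresent the pseudo-tensor operations matched by Theorem \ref{pt} — that is, that the Ran-space construction realizes the symmetric-monoidal envelope of the pseudo-tensor structure — so that ``Lie module in a pseudo-tensor module category'' and ``Lie module over the image Lie algebra in the associated genuine module category'' coincide.
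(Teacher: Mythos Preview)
Your proposal is correct and follows the same route the paper intends: the corollary is stated immediately after Theorem \ref{pt} with no separate proof, precisely because a Lie-* (resp.\ chiral) module is by definition a Lie module in the pseudo-tensor module category $D\mathcal{M}(X^{I_0})$, and Theorem \ref{pt} transports this structure fully faithfully to $D\mathcal{M}(Ran_{I_0})$. The ``obstacle'' you flag at the end is not an additional step: on the Ran side the pseudo-tensor operations are \emph{defined} via the genuine tensor products (this is how $D\mathcal{M}(Ran_{I_0})$ acquires its pseudo-tensor module structure in the first place), and the proof of Theorem \ref{pt} via Lemma \ref{ptensor} already establishes the needed identification $\bigotimes^{\bullet}_i r_*(M_i)\otimes r_{I_0*}(N_1)\simeq r_{\tilde I *}(\boxtimes_i M_i\boxtimes N_1)$.
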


\begin{lemma}\label{ptensor}
Let $M\in D\mathcal{M}(X^{I})$ and $N\in D\mathcal{M}(X^{\tilde{J}})$.  Then
\[ s_{I\sqcup \tilde{J}*}\left(M \boxtimes N\right) = s_{I*} (M) \otimes^* s_{\tilde{J} *}(N) \]
and
\[ s_{I\sqcup \tilde{J}*}\left(j_{\*}j^!(M \boxtimes N)\right) = s_{I*} (M) \otimes^{ch} s_{\tilde{J} *}(N) \]
where $j: U\hookrightarrow X^I\times X^J$ is the open set given by $\{x_i\neq y_j\}$ with $x_i$ and $y_j$ coordinates on $X^I$ and $X^J$ respectively.
\end{lemma}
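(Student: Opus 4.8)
The plan is to compute both sides of each asserted identity as objects of $D\mathcal{M}(X^{\mathcal{S}_{I_0}})$ by evaluating at an arbitrary $X^{\tilde P}$ and matching summands, and then to verify that the resulting fiberwise isomorphisms are compatible with the structure maps of $q$. Write $K = I\sqcup J$, so that $I\sqcup\tilde J = \tilde K$ and both sides of each identity lie in $D\mathcal{M}(X^{\mathcal{S}_{I_0}})$. Using the explicit description of the pushforward functors recalled above, the left-hand side of the first identity evaluates to
\[ s_{\tilde K*}(M\boxtimes N)_{X^{\tilde P}} = \bigoplus_{\pi:\tilde P\surj\tilde K}\Delta^{(\pi)}_\*(M\boxtimes N), \]
the sum running over surjections in $\mathcal{S}_{I_0}$ (those restricting to the identity on $I_0$), while the definition of the module $*$-tensor structure, after expanding each tensor factor again by the pushforward formula, gives
\[ \bigl(s_{I*}(M)\otimes^* s_{\tilde J*}(N)\bigr)_{X^{\tilde P}} = \bigoplus_{\tilde P = P_1\sqcup P_\star}\ \bigoplus_{\rho: P_1\surj I}\ \bigoplus_{\sigma: P_\star\surj\tilde J}\Delta^{(\rho)}_\*(M)\boxtimes\Delta^{(\sigma)}_\*(N), \]
the first sum being over partitions with $I_0\subset P_\star$ and the last over surjections in $\mathcal{S}_{I_0}$.

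The combinatorial heart of the proof is the bijection identifying a surjection $\pi:\tilde P\surj\tilde K = I\sqcup\tilde J$ in $\mathcal{S}_{I_0}$ with the data of the partition $\tilde P = P_1\sqcup P_\star$, $P_1 = \pi^{-1}(I)$, $P_\star = \pi^{-1}(\tilde J)\supset I_0$, together with the restrictions $\rho = \pi|_{P_1}: P_1\surj I$ and $\sigma = \pi|_{P_\star}: P_\star\surj\tilde J$ (the latter again a morphism of $\mathcal{S}_{I_0}$). Under this bijection $\pi = \rho\sqcup\sigma$, and the Künneth-type identity $\Delta^{(\rho)}_\*(M)\boxtimes\Delta^{(\sigma)}_\*(N) = \Delta^{(\rho\sqcup\sigma)}_\*(M\boxtimes N)$ — external product commuting with pushforward along a product of diagonal maps — matches the two displays summand by summand, proving the $*$-identity fiberwise.

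For the chiral identity I would keep the same indexing bijection but insert a base-change step for each $\pi = \rho\sqcup\sigma$. The square
\[ \xymatrix{ U \ar@{^{(}->}[r]\ar[d] & X^{\tilde K}\ar[d]^{\Delta^{(\pi)}} \\ U^{(\tilde P/[1]^+)}\ar@{^{(}->}[r] & X^{\tilde P} } \]
is Cartesian, since the preimage under $\Delta^{(\pi)}$ of the locus where the $P_1$-coordinates are disjoint from the $P_\star$-coordinates is exactly $U = \{x_i\neq y_j\}$: the map $\Delta^{(\pi)}$ sends the $I$-coordinates into the $P_1$-block and the $\tilde J$-coordinates into the $P_\star$-block. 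As $\Delta^{(\pi)}$ is a closed embedding, hence proper, base change together with composition through the open embeddings yields
\[ \Delta^{(\pi)}_\*\bigl(j_\*j^!(M\boxtimes N)\bigr)\simeq j^{(\tilde P/[1]^+)}_\*j^{(\tilde P/[1]^+)!}\bigl(\Delta^{(\rho)}_\*(M)\boxtimes\Delta^{(\sigma)}_\*(N)\bigr), \]
which is precisely the $\pi$-summand of $(s_{I*}(M)\otimes^{ch}s_{\tilde J*}(N))_{X^{\tilde P}}$; summing over the bijection gives the chiral identity fiberwise.

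Finally I would check naturality in $\tilde P$: the assignment $\pi\mapsto(P_1\sqcup P_\star,\rho,\sigma)$ is functorial under further collapsing $\tilde P\surj\tilde P'$, which restricts to compatible collapses of the two blocks, so the fiberwise isomorphisms commute with the diagonal-pushforward transition maps and assemble into isomorphisms of sections of $q$. I expect the main obstacle to be this coherence check rather than the fiberwise matching: one must verify, in the $\infty$-categorical setting, that the term-by-term identifications — and, in the chiral case, the base-change isomorphisms — are simultaneously natural with respect to all structure maps, which is where the bookkeeping of the coCartesian structure of $D\mathcal{M}(X^{\mathcal{S}_{I_0}})$ must be handled with care.
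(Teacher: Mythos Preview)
Your proof is correct and follows essentially the same approach as the paper's: expand both sides at an arbitrary level using the explicit formula for $s_{\tilde I *}$, then use the bijection between surjections $\tilde P\surj I\sqcup\tilde J$ in $\mathcal{S}_{I_0}$ and triples (partition, $\rho$, $\sigma$) to match summands via $\Delta^{(\rho)}_\*(M)\boxtimes\Delta^{(\sigma)}_\*(N)\simeq\Delta^{(\pi)}_\*(M\boxtimes N)$. You supply more detail than the paper does---in particular the Cartesian square and base-change step for the chiral case, which the paper simply declares ``the same,'' and the naturality check in $\tilde P$, which the paper leaves implicit---but the underlying argument is identical.
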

\begin{proof}
We have
\[ \left(s_{I *}(M) \otimes^* s_{\tilde{J} *}(N)\right)_{X^{\tilde{K}}}
= \bigoplus_{\pi: \tilde{K}\surj [1]^{+}} \left(s_{I *}(M)\right)_{K_1} \boxtimes \left(s_{\tilde{J} *}(N)\right)_{K_{\star}} = \]
\[
= \bigoplus_{\pi: \tilde{K}\surj [1]^{+}} \left( \bigoplus_{\pi_1: K_1\surj I} \Delta^{(\pi_1)}_{\*} M \right) \boxtimes \left(\bigoplus_{\pi_2: K_\star \surj \tilde{J}} \Delta^{(\pi_2)}_{\*} N \right) = \]
\[ = \bigoplus_{\tilde{K} \surj I\sqcup \tilde{J}} \Delta_{\*}^{(\pi)} M\boxtimes N 
= s_{I\sqcup \tilde{J} *} \left(M \boxtimes N\right)_{X^{\tilde{K}}}. \]
The proof for the chiral tensor product is the same.
\end{proof}

\begin{proof}[Proof of Theorem \ref{pt}]
We will give the proof for the *-pseudo-tensor structure.  The proof for the chiral pseudo-tensor structure is the same.  We need to show that for $M_i \in D\mathcal{M}(X)$, $i\in I$ and $N_1,N_2 \in D\mathcal{M}(X^{I_0})$ we have natural isomorphisms
\[ Hom_I^*(\{M_i ; N_1\}, N_2) \simeq Hom_I^*(\{M_i ; r_{I_0 *}(N_1)\}, r_{I_0 *}(N_2))\]
By definition,
\[ Hom_I^*(\{M_i ; r_{I_0 *}(N_1)\}, r_{I_0 *}(N_2)) = Hom_{D\mathcal{M}(Ran_{I_0})}\left(\bigotimes^*_i r_{[1] *}(M_i) \otimes^* r_{I_0 *}(N_1), r_{I_0 *}(N_2)\right).\]

Recall that the functors
\[ r_{\tilde{I}*} : D\mathcal{M}(X^{\tilde{I}})\rightarrow D\mathcal{M}(Ran_{I_0}) \]
factor as the composition $r_{\tilde{I}*} = r_* \circ s_{\tilde{I} *}$.
Therefore, by Lemma \ref{ptensor}, we have that
\[ \bigotimes^*_i r_{[1] *}(M_i) \otimes^* r_{I_0 *}(N_1) \simeq r_{\tilde{I} *}(\boxtimes_i M_i\boxtimes N_1).\]
Furthermore, by adjunction
\[ Hom_{D\mathcal{M}(Ran_{I_0})}(r_{\tilde{I} *}(\boxtimes_i M_i\boxtimes N_1), r_{I_0*}(N_2)) \simeq Hom_{D\mathcal{M}(X^{\tilde{I}})}(\boxtimes_i M_i\boxtimes N_1, r_{\tilde{I}}^!r_{I_0*}(N_2)) .\]
We have that $r_{\tilde{I}}^!r_{I_0*}(N_2) \simeq \Gamma_{I_0\subset \tilde{I}}(N_2)$, which gives the desired isomorphisms.
\end{proof}

\subsection{Chevalley complex}
Let $\mathcal{A}$ be a chiral algebra, $M$ a chiral module
$\mathcal{A}$ module on $X^{I_0}$.  We then have that
$\tilde{A}:=r_*(\mathcal{A})$ is a Lie algebra (with respect to
the $\otimes^{ch}$ tensor structre) and $\tilde{M}:=r_{I_0 *}(M)$ is a $\tilde{A}$-
module.  Therefore, we can form the Chevalley complex in $D\mathcal{M}(Ran_{I_0})$:
\[ C_*(\tilde{A},\tilde{M}) := \ldots \darrow C_n=Sym^{n}(\tilde{A}[1])\otimes^{ch} \tilde{M} \darrow C_{n-1}\ldots \]
where, by convention, $C_0 = Sym^0(\tilde{A}[1])\otimes \tilde{M} =
\tilde{M}$.

\begin{propn}\label{chev}
Let $\mathcal{A}$ be a chiral algebra, $M$ a factorization $\mathcal{A}$
module on $X^{I_0}$.  Then, as an object of $D\mathcal{M}(Ran_{I_0})$, $M$ is quasi-isomorphic
to the Chevalley complex $C_*(r_*(\mathcal{A}),r_{I_0 *}(M))$.
\end{propn}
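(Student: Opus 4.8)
The plan is to verify the quasi-isomorphism object-by-object over the diagram $\mathcal{S}_{I_0}$, reducing to a comparison of the Chevalley complex with the inductive presentation of $M$ already obtained in the proof of Theorem~\ref{equiv}. Since an object of $D\mathcal{M}(Ran_{I_0})$ is recovered from its values $r^!_{\tilde J}(-)$ together with the structure isomorphisms, and since a morphism is a quasi-isomorphism precisely when it is one on each $X^{\tilde J}$, it suffices to produce natural quasi-isomorphisms $r^!_{\tilde J}\, C_*(\tilde A,\tilde M)\simeq M^{\tilde J}$ compatible with the functors $\Delta^{(\pi)!}$. Writing $M$ also for the underlying sheaf $M^{I_0}$ on $X^{I_0}$, Theorem~\ref{equiv} turns the factorization module into a chiral module, and Corollary~\ref{lieequiv} guarantees that $\tilde M=r_{I_0*}(M)$ is a genuine Lie module over $\tilde A=r_*(\mathcal{A})$ for $\otimes^{ch}$, so that the complex $C_*(\tilde A,\tilde M)$ is defined.

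First I would express $C_*(\tilde A,\tilde M)$ through the localization $r_*$. Because $r_*$ and $r_{I_0*}$ are tensor functors and $\otimes^{ch}$ on the Ran space is defined by transport along $r^!$ and $r_*$, the complex $C_*(\tilde A,\tilde M)$ is the image under $r_*$ of the analogous Chevalley complex formed in the module category $D\mathcal{M}(X^{\mathcal{S}_{I_0}})$ over $D\mathcal{M}(X^{\mathcal{S}})$ out of $s_{[1]*}(\mathcal{A})$ and $s_{I_0*}(M)$. Using Lemma~\ref{ptensor} together with the identity $r_{I_0*}(M)_{X^{\tilde J}}=\Gamma_{(I_0\subset \tilde J)}(M)$ and the incidence-correspondence description of the $r_{\tilde I*}$, the evaluation $r^!_{\tilde J}\,C_n$ unwinds into a direct sum, indexed by the distributions of $n$ symmetrized copies of $\mathcal{A}$ and the module factor among the coordinates of $\tilde J$, of terms of the shape $j_*j^!\big(\mathcal{A}^{\boxtimes k}\boxtimes \Gamma_{(I_0\subset -)}(M)\big)$ glued along the open loci $U^{(\tilde J/I)}$. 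The Chevalley differential splits into its bracket part, which collides two copies of $\mathcal{A}$ through the chiral bracket and produces the $\Delta_*$-contributions, and its action part, which feeds a copy of $\mathcal{A}$ into the chiral action $\mu$ on the module factor.

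The core of the proof is to identify this complex with the $n$-fold iterated cone built in the proof of Theorem~\ref{equiv}, where $M^{\widetilde{[n]}}$ appears as the two-term complex $j_*j^!(\mathcal{A}\boxtimes M^{\widetilde{[n-1]}})\xrightarrow{\ \mu^{\widetilde{[n-1]}}\ }\Gamma_{(\widetilde{[n-1]}\subset\widetilde{[n]})}(M^{\widetilde{[n-1]}})$ and $M^{\widetilde{[n-1]}}$ is recursively such a cone down to $M^{I_0}$. Expanding this iterated cone yields an $n$-cube whose corners, grouped by cardinality, match the $n+1$ terms of $C_*$; the outermost map is the last Chevalley differential, the bracket terms reproduce the $\Delta_*$-summands by the factorization of $\Gamma$ in Lemma~\ref{gfact}, and the passage from the ordered cone to the symmetrized $Sym^n(\tilde A[1])$ accounts for the unordered nature of the colliding points. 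Once this identification is in place, the quasi-isomorphism $C_*(\tilde A,\tilde M)_{X^{\tilde J}}\simeq M^{\tilde J}$ is exactly the content of Theorem~\ref{equiv}, and its compatibility with all $\Delta^{(\pi)!}$ follows from the factorization of each constituent, just as in the final step of that proof.

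The main obstacle is twofold. The representation-theoretic part --- that the complex is acyclic off degree zero and computes $M^{\tilde J}$ there --- is inherited from Theorem~\ref{equiv}, where acyclicity was reduced to the surjectivity of $\mu^{I_0}$ supplied by the unit axiom and to the fact that $C(\mathcal{A},M)^{(\widetilde{[2]})}$ and $C(\mathcal{A},M^{\widetilde{[1]}})^{(\widetilde{[3]})}$ are genuine complexes, i.e. to the Jacobi identity for $\mathcal{A}$ and the Lie-action identity for $M$. What remains genuinely delicate is, first, the bookkeeping of signs and of the symmetrization in $Sym^n(\tilde A[1])$ against the ordered cube differential, which I expect to control by the standard identification of the Chevalley differential with the Koszul differential of the bar construction; and second, the localization step, namely checking that the naive Chevalley complex on $X^{\mathcal{S}_{I_0}}$ descends correctly under $r_*$ so that the homotopy colimit defining $r^!_{\tilde J}r_*$ collapses the spurious strata and returns precisely the iterated-cone complex above.
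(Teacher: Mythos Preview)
Your approach is correct but circuitous compared to the paper's. The paper proceeds by direct term-by-term computation: using the explicit formula for $\otimes^{ch}$ on $D\mathcal{M}(X^{\mathcal{S}_{I_0}})$ together with $r_{I_0*}(M)_{X^{\tilde J}}=\Gamma_{(I_0\subset\tilde J)}(M)$, one writes out $(Sym^n(\tilde A[1])\otimes^{ch}\tilde M)_{X^{\tilde J}}$ as a sum over partitions $\tilde J\surj[n]^+$ of terms $j_\*^{(\tilde J/[n]^+)}j^{(\tilde J/[n]^+)!}(\boxtimes_i\Delta_\*^{J_i}\mathcal A\boxtimes\Gamma_{(I_0\subset J_\star)}(M))$, symmetrized by $\Sigma_n$. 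This is \emph{literally} the $n$-th term of the Cousin complex of $M^{(\tilde J)}$ for the stratification by diagonals, and the Chevalley differential coincides with the Cousin differential because the chiral bracket and action maps were defined in Theorem~\ref{equiv} as the connecting maps in the short exact sequences~(\ref{cousin}). Since $M^{(\tilde J)}$ has no local sections on the diagonals, its Cousin complex is a resolution, and the proposition follows.

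You instead match the Chevalley complex against the iterated two-term cone from the proof of Theorem~\ref{equiv} and appeal to that theorem for acyclicity. This is not wrong---unfolding the iterated cone \emph{is} the Cousin complex---but it forces you to confront exactly the bookkeeping you flag as delicate: the symmetrization of the ordered $n$-cube against $Sym^n(\tilde A[1])$, and the behaviour of the localization $r_*$ on the naive Chevalley complex. In the paper's route both issues evaporate: the computation is done directly on each $X^{\tilde J}$ using the closed formulas, the $\Sigma_n$-coinvariants are already built into the sum over unordered partitions, and there is no need to pass through $r_*$ or invoke Theorem~\ref{equiv} at all. The payoff of your route is that it makes the logical dependence on the chiral/factorization equivalence explicit, but the paper's observation that the Chevalley complex \emph{is} the Cousin complex is both shorter and conceptually sharper.
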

\begin{proof}
On $X^{\tilde{J}}$, we have
\[ (Sym^{n}(\tilde{A}) \otimes^{ch} \tilde{M})_{X^{\tilde{J}}} = 
\underset{\tilde{J}\surj [1]^+}{\oplus} j_\*^{(\tilde{J}/[1]^+)}j^{(\tilde{J}/[1]^+) !} (Sym^n(\tilde{A})_{X^{J_1}} \boxtimes \Gamma_{(I_0\subset J_\star)}(M)) = \]
\[ =
\underset{\tilde{J}\surj [1]^+}{\oplus} \underset{J_1\surj [n]}{\oplus} j_\*^{(\tilde{J}/[1]^+)}j^{(\tilde{J}/[1]^+) !}( j^{(J_1/[n])}_\*j^{(J_1/[n]) !} (\boxtimes_{i\in [n]} \Delta_{\*}^{(J_1)_i} \mathcal{A} \boxtimes \Gamma_{(I_0\subset J_\star)}(M))_{\Sigma_n} ) = \] 
\[ =
\left(\underset{\tilde{J}\surj [n]^+}{\oplus} j_\*^{(\tilde{J}/[n]^+)}j^{(\tilde{J}/[n]^+) !} (\boxtimes_{i\in [n]} \Delta_{\*}^{J_i} \mathcal{A} \boxtimes \Gamma_{(I_0\subset J_\star)}(M) ) \right)_{\Sigma_n}
 \]
with the differentials between these given by the various
multiplication maps.  Thus we see that this Chevalley complex is
exactly the Cousin complex for $M^{(\tilde{J})}$ along the
stratification given by the diagonals.
\end{proof}

We can use this fact to give a description of modules which are in the essential image of $r_{I_0 *}$ in $\mathcal{M}(Ran_{I_0})$.

\begin{propn}\label{liefact}
Let $\mathcal{A}$ be a chiral algebra, and $M\in \mathcal{M}(Ran_{I_0})$ a Lie module for $r_*(\mathcal{A})$.  Then the Chevalley complex $C_*(r_*(\mathcal{A}), M)\in D\mathcal{M}(Ran_{I_0})$ is a factorization module for $\mathcal{A}$ if and only if $M$ is in the essential image of
\[ r_{I_0 *}: \{\mbox{Chiral } \mathcal{A}\mbox{-mods}\}\rightarrow \{\mbox{Lie } r_*(\mathcal{A})\mbox{-mods}\}. \]
\end{propn}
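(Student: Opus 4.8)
The plan is to deduce both implications from the two equivalences already in hand: Theorem~\ref{equiv} (chiral $=$ factorization) and Proposition~\ref{chev}, which says that for a chiral/factorization module $M_0$ on $X^{I_0}$ the complex $C_*(r_*(\mathcal{A}),r_{I_0*}(M_0))$ is exactly the associated factorization module. Thus the substance of the proposition is that these are the \emph{only} Lie $r_*(\mathcal{A})$-modules whose Chevalley complex factorizes, and the whole argument reduces to showing that the functor $C_*(r_*(\mathcal{A}),-)$ reflects isomorphisms on the relevant modules. The ``only if'' direction is then immediate: if $M\simeq r_{I_0*}(M_0)$ for a chiral $\mathcal{A}$-module $M_0$, Proposition~\ref{chev} identifies $C_*(r_*(\mathcal{A}),M)$ with the factorization module attached to $M_0$ by Theorem~\ref{equiv}, so it is a factorization module.

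For the ``if'' direction, suppose $F:=C_*(r_*(\mathcal{A}),M)$ is a factorization module. First I would recover the underlying chiral module by restricting to the smallest stratum: since $Sym^n(r_*(\mathcal{A})[1])$ has no component over $X^{\emptyset}$, one gets $r^!_{I_0}(C_n)=0$ for $n\geq 1$ and $r^!_{I_0}(C_0)=M^{I_0}$, so that $F^{I_0}=M^{I_0}=:M_0$; because $F$ is a factorization module, $M_0$ is automatically a chiral $\mathcal{A}$-module. Using that $r_{I_0*}$ is left adjoint to $r^!_{I_0}$ compatibly with the pseudo-tensor module structures (Theorem~\ref{pt} together with Corollary~\ref{lieequiv}), the identity $M_0\to r^!_{I_0}(M)=M_0$ corresponds to a morphism of Lie $r_*(\mathcal{A})$-modules $\epsilon\colon r_{I_0*}(M_0)\to M$, which I claim is an isomorphism.

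Next I would apply the Chevalley functor to $\epsilon$. The resulting map $C_*(\epsilon)\colon C_*(r_*(\mathcal{A}),r_{I_0*}(M_0))\to F$ is a morphism of factorization modules (its source factorizes by Proposition~\ref{chev}, its target by hypothesis) which is the identity on $X^{I_0}$; since a factorization module is determined by and functorial in its restriction to $X^{I_0}$ by the inductive reconstruction in the proof of Theorem~\ref{equiv}, such a morphism is an isomorphism. It then remains to pass from $C_*(\epsilon)$ back to $\epsilon$, and I would do this by induction on $|\tilde{J}|$, checking each component $\epsilon_{X^{\tilde{J}}}\colon \Gamma_{(I_0\subset\tilde{J})}(M_0)\to M^{\tilde{J}}$. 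The map $C_*(\epsilon)$ respects the symmetric-power filtration, and on $X^{\tilde{J}}$ the weight-$n$ term of the Chevalley complex involves the module only through components indexed by $J_\star\subsetneq\tilde{J}$ whenever $n\geq 1$; hence by the inductive hypothesis $C_*(\epsilon)_{X^{\tilde{J}}}$ is an isomorphism in every weight $\geq 1$. The cone of $C_*(\epsilon)_{X^{\tilde{J}}}$ is acyclic and carries a finite filtration whose weight-$\geq 1$ graded pieces are acyclic, so its weight-$0$ graded piece, namely $\mathrm{cone}(\epsilon_{X^{\tilde{J}}})$, is acyclic too, and $\epsilon_{X^{\tilde{J}}}$ is an isomorphism.

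The hard part will be exactly this last conservativity step, recovering an isomorphism of Lie modules from an isomorphism of their Chevalley complexes: the totalization forgets the symmetric-power filtration, so the key is to reinstate it. Working stratum by stratum on $X^{\tilde{J}}$, where each Chevalley complex is finite and its weight filtration cleanly separates the module component $M^{\tilde{J}}$ in weight $0$ from strictly smaller components in weight $\geq 1$, is what makes the cone argument go through; everything else is formal once Theorem~\ref{equiv} and Proposition~\ref{chev} are granted.
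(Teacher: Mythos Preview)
Your proposal is correct and follows essentially the same approach as the paper. The only cosmetic difference is in the conservativity step: the paper passes directly to the cone $N=\mathrm{cone}(\epsilon)$, observes $C_*(r_*(\mathcal{A}),N)\simeq 0$, and then notes that once $N_{X^{\tilde I}}\simeq 0$ for $|I|<|J|$ one has $C_*(r_*(\mathcal{A}),N)_{X^{\tilde J}}\simeq N_{X^{\tilde J}}$; this is exactly your symmetric-power filtration argument repackaged.
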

\begin{proof}
By the above, if $M = r_{I_0 *}(M')$ then its Chevalley complex is a factorization module.

Now, suppose $C(r_*(\mathcal{A}), M)$ is a factorization module for $\mathcal{A}$.  
This factorization module is equivalent to the chiral module $M_{X^{I_0}}$ and the inverse equivalence is given by the Cousin complex for the stratification along the diagonals, which by Proposition \ref{chev} is exactly the Chevalley complex $C(r_*(\mathcal{A}), r_{I_0 *}(M_{X^{I_0}}))$.  It follows that
\[ C(r_*(\mathcal{A}), M) \simeq C(r_*(\mathcal{A}), r_{I_0 *}(M_{X^{I_0}})).\]
Let $N$ be the cone of the natural map $r_{I_0 *}(M_{X^{I_0}})\rightarrow M$.  It has a natural structure of a Lie $r_{I_0 *}(\mathcal{A})$ module, and we have $C(r_*(\mathcal{A}), N)\simeq 0$.  To prove that $M\simeq r_{I_0 *}(M_{X^{I_0}})$, we need to show that $N\simeq 0$.  We will do this by showing that $N_{X^{\tilde{I}}}\simeq 0$ for all finite sets $I$ by induction on $|I|$.  For $|I|<0$ there is nothing to prove.  Now suppose $N_{X^{\tilde{I}}}\simeq 0$ for $|I|<|J|$.  It follows that $C(r_*(\mathcal{A}), N)_{X^{\tilde{J}}}\simeq N_{X^{\tilde{J}}}$ and in particular, $N_{X^{\tilde{J}}}\simeq 0$.
\end{proof}

\begin{rem}
J. Francis \cite{JNKF} previously gave a proof of Proposition \ref{liefact}, which does not appeal to the equivalence of chiral and factorization modules, using different methods.  Thus, we can give an alternate proof of Theorem \ref{equiv} using Corollary \ref{lieequiv} and Proposition \ref{liefact}.\end{rem}

\end{document}